\documentclass[jacodes,PDF]{jac} 
\usepackage{layout}

\newcommand{\deqno}{\refstepcounter{theorem}(\thetheorem)}
\theoremstyle{definition}
\newtheorem{disc}[theorem]{Discussion}
\newtheorem{ques}[theorem]{Question}

\newcounter{item}
\newenvironment{nlist}[1][1]{\begin{list}
  {\textup{(\arabic{item})}}{\usecounter{item} \setcounter{item}{#1}\addtocounter{item}{-1}
  \setlength{\itemsep}{0ex}
  \setlength{\topsep}{0ex} \setlength{\parsep}{0ex} \setlength{\labelwidth}{15mm}
  \setlength{\leftmargin}{10mm} } }{\end{list}}

\newcommand{\ds}{\displaystyle}
\newcommand{\zz}{\mathbb{Z}}

\title{A study of $m$-ary partitions whose conjugates are $q$-ary
}

\articletype{Research Article} 
\year{2026}
\issue{ 13(3)}
\ilksayfa{385} 
\shortauthor{G. D. Dietz et al.}  
\doi{\href{https://doi.org/10.13069/jacodesmath.v13i3.393}{10.13069/jacodesmath.v13i3.393}}
\gel{01 July 2025}   
\ok{07 April 2026}  

\author{
\href{https://orcid.org/0009-0002-4711-5174}{Geoffrey D. Dietz}\blfootnote{Geoffrey D. Dietz
(Corresponding Author); 
Department of Mathematics, Gannon University, Erie, PA, USA  (email: dietz005@gannon.edu)}, 
\href{https://orcid.org/0009-0001-0677-9559}{Timothy B. Flowers}\blfootnote{Timothy B. Flowers; Department of Mathematical and Computer Sciences, Indiana University of Pennsylvania, Indiana, PA, USA (email: flowers@iup.edu)},  
\href{https://orcid.org/0009-0002-0081-5165}{Shannon R. Lockard}\blfootnote{Shannon R. Lockard; Department of Mathematics, Bridgewater State University, Bridgewater, MA, USA (email: Shannon.Lockard@bridgew.edu)}
 }

\abstract{While people have studied $m$-ary partitions of an integer $n$ and studied conjugation of partitions of $n$, these topics are rarely mixed because the $m$-ary property is almost always lost after conjugation. In a previous work, Flowers and Lockard investigated $m$-ary partitions of $n$ whose conjugates were also $m$-ary. We generalize that previous work by studying $m$-ary partitions whose conjugates are $q$-ary, where $m$ and $q$ may be distinct. We provide a family of operators on these partitions that can be used to generate all such partitions uniquely and associate a unique polynomial with each partition based on the sequence of operators used to generate it. Using the generating operators and modular arithmetic we explore many examples and families of $m$-ary partitions whose conjugates are $q$-ary. }

\keywords{Partitions, Conjugates, $m$-ary, Congruences}

\msc{11P81, 05A17, 11P83}

\begin{document}
\maketitle

\begin{textblock*}{18cm} (8.3cm,26.55cm)
\begin{flushleft} \center{\small{ISSN 2148-838X}}
\end{flushleft}
\end{textblock*}
\begin{textblock*}{18cm} (2.6cm,26.55cm)
\begin{flushleft} \small{\doino}
\end{flushleft}
\end{textblock*}
\begin{textblock*}{18cm} (2.5cm,1.75cm)
\begin{flushleft} \textbf{\footnotesize{J. Algebra Comb. Discrete Appl.\\
\issuevol\;$\bullet$\;\ilkshf--\sonshf  }}
\end{flushleft}
\end{textblock*}
\begin{textblock*}{18cm} (.5cm,1.75cm)
\begin{flushright} \textbf{\footnotesize{Received: \gelis \\Accepted: \kabul}}
\end{flushright}
\end{textblock*}

\blfootnote{© 2026 The Author(s). Published by iPeak Academy Ltd. This is an open access article under the CC BY 4.0 license
(https://creativecommons.org/licenses/by/4.0/).}

\section{Introduction}

In this paper we present some new results related to $m$-ary partitions and to their conjugates. An $m$-ary partition of an integer $n$ is a partition of $n$ where the parts are all powers of $m$. These types of partitions have been extensively studied in the past. See the work of Mahler \cite{M40}, de Bruijn \cite{deB48}, and Pennington \cite{P53}. Additionally, see \cite{A71}, \cite{AFS15}, \cite{CW00}, \cite{C69}, \cite{CS04}, \cite{E18}, \cite{FL17}, \cite{G72}, and \cite{R70}. On the other hand, conjugation of partitions has played a long and prominent role in the field of integer partitions. See, for example, \cite{A84} and \cite{AE04}. 

These two ideas rarely intersect as conjugations of $m$-ary partitions are rarely $m$-ary.  In \cite{FL21}, Flowers and Lockard  investigated $m$-ary partitions whose conjugates are also $m$-ary using the following notation.


\begin{definition}Let $\lambda$ be a partition of the integer $n$.
\begin{nlist}
\item The \textbf{conjugate} of $\lambda$ will be the partition $\lambda'$ formed by reflecting its Ferrers diagram over its main diagonal. 
\item \cite{FL21} If all of the parts of $\lambda$ are powers of $m$, then we say $\lambda$ is \textbf{$m$-ary}. Let $a_i$ denote the number of times that the part $m^i$ appears in $\lambda$. Then $n = \sum_{i=0}^k a_i m^i$, and we write
$$
\lambda = (a_k,a_{k-1},\ldots, a_1, a_0)_m
$$
and refer to each $a_i$ as the \textbf{multiplicity} of part $m^i$. 
\item If $\lambda$ and $\lambda'$ are both $m$-ary, then $\lambda$ is a \textbf{conjugate $m$-ary partition} (or \textbf{CMP}). 
\end{nlist}
\end{definition}

For example, $\lambda = (1,2,0)_3$ with parts $9,3,3$ is a $3$-ary CMP of $15$ as $\lambda' = (3,6)_3$ with parts $3,3,3,1,1,1,1,1,1$. As another example, $\lambda = (16, 0)_4$ is a $4$-ary CMP of $64$ with $\lambda' = (4,0,0)_4$. We call these latter examples rectangular because of the shape of their respective Ferrers diagrams.

In the rest of this article we will extend the work of \cite{FL21} to $m$-ary partitions whose conjugates are $q$-ary, where $m$ and $q$ need not be the same base. 

\begin{definition}
Let $\lambda$ be a partition of an integer $n$. 
$\lambda$ is called \textbf{conjugate $(m,q)$-ary} if $\lambda$ is $m$-ary while $\lambda'$ is $q$-ary. 
As in  \cite{FL21}, we denote conjugate $(m,q)$-ary partitions as $\lambda = (a_k,a_{k-1},\ldots,a_0)_m$ meaning that the partition has the part $m^i$ appearing $a_i$ times. 
\end{definition}

For example, $\lambda = (2,6)_3$ is a $3$-ary partition of 12 with parts $3,3,1,1,1,1,1,1$ while $\lambda' = (1,0,2,0)_2$ is a $2$-ary partition of 12 with parts $8,2,2$. 

By computing partial sums of the multiplicities in $\lambda$ and following the algorithm for conjugation, one can find the parts of $\lambda'$. If those partial sums are all powers of $q$, then $\lambda'$ will be $q$-ary. We state this result below. The argument is essentially given in \cite[Theorem 2.2]{FL21}. 

\begin{proposition}
If $\lambda$ is an $m$-ary partition, then it is a conjugate $(m,q)$-ary partition if and only if the partial sums $\sum_{i=r}^k a_i$ are powers of $q$ for all $0\leq r\leq k$.
\end{proposition}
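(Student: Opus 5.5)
The plan is to describe the parts of $\lambda'$ explicitly in terms of the multiplicities $a_i$ and then read off the condition. Write $\lambda = (a_k,\ldots,a_0)_m$ and assume $a_k>0$, so $m^k$ is the largest part; leading zeros would make the partial sum $\sum_{i=k}^k a_i = 0$, which is not a power of $q$, so we fix this normalization. Let $s_r = \sum_{i=r}^k a_i$ for $0\le r\le k$. Recall that the $j$-th part of $\lambda'$ equals the number of parts of $\lambda$ that are at least $j$. For $m^{r-1} < j \le m^r$ (with $1\le r\le k$), the parts of $\lambda$ that are $\ge j$ are exactly those equal to $m^r,\ldots,m^k$, so the $j$-th part of $\lambda'$ is $s_r$. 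For $j=1$ (the case $r=0$) every part counts, and the first part of $\lambda'$ is $s_0$. For $j>m^k$ no part counts.

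Consequently $\lambda'$ consists of the following parts:
\begin{itemize}
\item $s_0$ once;
\item $s_r$ repeated $m^r - m^{r-1}$ times, for $1\le r\le k$.
\end{itemize}
Each multiplicity here is positive, since $m\ge 2$ and $1 \ge 1$. In other words, $\lambda'$ has parts $s_k \le s_{k-1}\le\cdots\le s_0$, and every value $s_r$ actually occurs among its parts; coincidences such as $s_r=s_{r+1}$ when $a_r=0$ only merge blocks.

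Both directions now follow. The set of distinct part sizes of $\lambda'$ is exactly $\{s_0,\ldots,s_k\}$. Hence $\lambda'$ is $q$-ary precisely when each $s_r$ is a power of $q$, which is the stated condition.

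The only delicate step is the careful indexing of $j$ in the half-open intervals $(m^{r-1},m^r]$, together with the special treatment of $j=1$. The role of the normalization $a_k\neq 0$ should also be stated explicitly, since otherwise the $r=k$ condition fails vacuously. This mirrors the argument of \cite[Theorem 2.2]{FL21}, with $q$ replacing $m$ on the conjugate side; the base $m$ enters only through the block lengths, so the bases never interact.
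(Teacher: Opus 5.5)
Your proposal is correct and is essentially the paper's argument: the paper simply notes, citing \cite[Theorem 2.2]{FL21}, that the conjugation algorithm makes the parts of $\lambda'$ exactly the partial sums $s_r=\sum_{i=r}^k a_i$, and you make this explicit ($s_0$ once, $s_r$ repeated $m^r-m^{r-1}$ times for $1\le r\le k$). You also spell out two points the paper leaves implicit: every $s_r$ actually occurs as a part, which gives the converse direction, and the normalization $a_k\neq 0$ is needed.
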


In the next section, we introduce a family of operators $R,M,S$, and $J$ that act on the set of conjugate $(m,q)$-ary partitions. We will use those operators to generate the set of all conjugate $(m,q)$-ary partitions in a unique manner based on sequences of $R$ and $M$ or sequences of $S$ and $J$. See Theorem \ref{repthm}. We can also show that if one reverses the order of the operators used to generate $\lambda$, then one generates its conjugate $\lambda'$.  See Corollary \ref{conjcor}. The pairs of operators $R, M$ and $S,J$ have a duality relationship (see Lemma \ref{conjlemma}) that allows us to focus on just one of the pairs of operators. For the most part, we choose to use the $R,M$ pair for our later results, but similar results would also follow using $S,J$ instead. 

In the third section, we associate a unique polynomial to each sequence of the operators $R$ and $M$. See Definition \ref{polyndef}. With the aid of induction, we then can calculate an explicit expression for each such polynomial. See Theorem \ref{polynthm}. (Warning: the polynomials are rather complicated and not for the faint of heart!) These polynomials give a means of classifying whether or not a given integer $n$ has a conjugate $(m,q)$-ary partition. See Corollary \ref{nclassify}. These results then help place bounds on the possible $n$ values that have such partitions or on the number of operators $R$ and $M$ that can generate a conjugate $(m,q)$-ary partition of such an $n$. 

In the fourth section, we use modular arithmetic to better understand which integers $n$ can have a conjugate $(m,q)$-ary partition for a given pair of integers $m$ and $q$. In some cases, we can also count how many such partitions are possible. The operators $M$ and $R$ are again of great utility in this work. We provide some general results and examples.

In the fifth section, we use the $(m,q)$-ary techniques developed here in order to further study the CMPs of \cite{FL21}, particularly CMPs of powers of $m$. We use all four operators $R,M,S$, and $J$ to calculate families of CMPs for certain powers of $m$, where $m$ is any integer greater than or equal to 2. These results are summarized in Discussion \ref{summarylist} and followed by some interesting open questions related to CMPs of powers of $m$. 

In the final section, we investigate how conjugate $(m,q)$-ary partitions are related to $(m^a,q^b)$-ary partitions. We end by looking at the case where $m$ and $q$ are both powers of a common root $w$. In Theorem \ref{commonpowerthm} we generalize \cite[Theorem 3.4]{FL21} by counting exactly how many conjugate $(w^a,w^b)$-ary partitions an integer $n$ can have if $n\equiv -1 \pmod{w}$.

Along the way we will also draw attention to many of the highlights of \cite{FL21} and see how those results can be generalized to our new setting. 

\section{Generating Operators}

In \cite{FL21}, two operators, raise and shift, were introduced as methods to generate new CMPs given a CMP. We extend these notions to conjugate $(m,q)$-ary partitions and introduce two other operations that we call migrate and jump. 

\begin{definition} \label{opdef}
Let $\lambda = (a_k,a_{k-1},\ldots, a_0)_m$ be a conjugate $(m,q)$-ary partition of the integer $n$ with $\sum_{i=0}^k a_i = q^t$.
\begin{nlist}
\item The \textbf{raise} of $\lambda$ is $R(\lambda) =  (qa_k,qa_{k-1},\ldots, qa_0)_m$, which is  a conjugate   $(m,q)$-ary  partition of $qn$.
\item The \textbf{migration} of $\lambda$ is $M(\lambda) =  (1,a_k-1,a_{k-1},\ldots, a_0)_m$, which is  a conjugate   $(m,q)$-ary  partition of $n + m^{k+1}-m^k$. 
\item The \textbf{shift} of $\lambda$ is $S(\lambda) =  (a_k,a_{k-1},\ldots, a_0,0)_m$, which is  a conjugate   $(m,q)$-ary  partition of $mn$. 
\item The \textbf{jump} of $\lambda$ is $J(\lambda) =  (a_k,a_{k-1},\ldots, a_1, a_0+q^{t+1}-q^t)_m$, which is  a conjugate  $(m,q)$-ary  partition of $n + q^{t+1}-q^t$. 
\end{nlist}
In each case, if there is ambiguity about the type of partition (e.g., Lemma~\ref{conjlemma}, Theorem~\ref{repthm}, and Corollary~ \ref{conjcor}), we use subscripts such as $R_{mq}(\lambda)$ to indicate that raise is being applied to  a conjugate $(m,q)$-ary partition or $R_{qm}(\lambda')$ to indicate raise is applied to the conjugate $(q,m)$-ary partition $\lambda'$. If no subscripts are displayed in a statement or proof, the reader may safely assume that all operators shown have subscripts of $mq$ and are acting only on conjugate $(m,q)$-ary partitions. 
\end{definition}

\begin{remark} \label{usefulremark} We list a few easy to see but useful facts about these operators. 
\begin{nlist}
\item Each of $R$, $M$, $S$, and $J$ is a one-to-one operator acting on the set of $(m,q)$-ary partitions. 
\item $R^s(q^t)_m = (q^{t+s})_m = (q^{t} + q^{t+s}-q^t)_m = J^s(q^t)_m$ 
\item $M^r(1)_m = (1,0,\ldots,0)_m = S^r(1)_m$
\end{nlist}
\end{remark}

The operators $R$ and $M$ are dual to $S$ and $J$, respectively, via the conjugation operation as illustrated in the following lemma. 

\begin{lemma} \label{conjlemma} Let $\lambda = (a_k,a_{k-1},\ldots, a_0)_m$ be a conjugate $(m,q)$-ary partition of the integer $n$ with $a_k \neq 0$, and let $\lambda' =  (b_\ell,b_{\ell-1},\ldots, b_1, b_0)_q$ be the conjugate $(q,m)$-ary partition. Then 
\begin{nlist}
\item $R_{mq}(\lambda)' = S_{qm}(\lambda')$ and $S_{mq}(\lambda)' = R_{qm}(\lambda')$
\item $M_{mq}(\lambda)' = J_{qm}(\lambda')$ and $J_{mq}(\lambda)' = M_{qm}(\lambda')$
\end{nlist}
\end{lemma}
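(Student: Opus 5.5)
The approach is to make conjugation completely explicit in terms of multiplicities and partial sums, and then check each identity coordinatewise. Write $s_r=\sum_{i=r}^k a_i$ for $0\le r\le k$, and set $s_{k+1}=0$. By the Proposition, each $s_r$ is a power of $q$, and $s_k\le s_{k-1}\le\cdots\le s_0=q^t$. In the Ferrers diagram of $\lambda$, the $j$-th column has length equal to the number of parts that are at least $j$. For $m^{r-1}<j\le m^r$ this count is $s_r$, and for $1\le j\le 1$ (that is, $r=0$) it is $s_0$. So $\lambda'$ has $m^r-m^{r-1}$ parts equal to $s_r$ for $1\le r\le k$, and one part equal to $s_0$. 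Collecting parts by size, the multiplicity $b_j$ of $q^j$ in $\lambda'$ is
\[
b_j=\sum_{r:\, s_r=q^j} c_r,\qquad c_0=1,\; c_r=m^r-m^{r-1}\ (r\ge1).
\]
Since $a_k\neq 0$, we have $s_k\ge1$, so every column is nonempty. I would also record two facts: the largest part of $\lambda'$ is $s_0=q^t$, so $\ell=t$; and $\sum_j b_j=\sum_r c_r=m^k$.

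For (1), apply $R_{mq}$. Every partial sum becomes $qs_r$, while the weights $c_r$ are unchanged. Hence $\lambda'$ keeps the same multiplicities but each part $q^j$ becomes $q^{j+1}$, giving $(b_\ell,\dots,b_0,0)_q=S_{qm}(\lambda')$. For $S_{mq}(\lambda)=(a_k,\dots,a_0,0)_m$ the new partial sums are $s_{r-1}$ at index $r$ for $1\le r\le k+1$, together with $s_0$ at index $0$. The part $q^j$ then receives weight
\[
\sum_{s_{r}=q^j}c_{r+1}\;+\;[s_0=q^j]\cdot c_0=\sum_{s_r=q^j} m\,c_r=m\,b_j .
\]
This uses $c_{r+1}=mc_r$ for $r\ge1$ and $c_1+c_0=m=mc_0$. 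The result is $R_{qm}(\lambda')$.

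For (2), take $M_{mq}(\lambda)=(1,a_k-1,\dots,a_0)_m$. The partial sums at indices $0,\dots,k$ are unchanged, and a new one $s_{k+1}=1=q^0$ appears with weight $c_{k+1}=m^{k+1}-m^k$. So only $b_0$ changes, increasing by $m^{k+1}-m^k$. Since $\lambda'$ has total multiplicity $m^k$, its exponent parameter is $k$, and this is exactly $J_{qm}(\lambda')$. For $J_{mq}(\lambda)$, only $a_0$ changes, so only $s_0$ changes, from $q^t$ to $q^{t+1}$, and $s_0$ carries weight $c_0=1$. Thus one part $q^t$ is removed and one part $q^{t+1}$ is added. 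Because $\ell=t$, this turns $(b_t,\dots,b_0)_q$ into $(1,b_t-1,\dots,b_0)_q=M_{qm}(\lambda')$.

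The main obstacle is the bookkeeping at index $0$, where $c_0=1$ is not of the form $m^r-m^{r-1}$. This boundary case drives the shift identity and identifies the relevant exponents ($\ell=t$ for $M_{qm}$, and $m^k$ as the part count for $J_{qm}$). It also requires $a_k\neq0$ so that $b_\ell\neq0$ and $M_{qm}$ acts on the true leading multiplicity, and I would handle it by checking it separately.
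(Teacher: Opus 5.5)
Your proof is correct and follows essentially the paper's argument: the paper also reads off $\lambda'$ from the conjugation algorithm, with the partial sums $s_r$ weighted by $m^r-m^{r-1}$ (your $c_r$, with $c_0=1$), so that $R_{mq}$ multiplies every $s_r$ by $q$ and $M_{mq}$ only adds $m^{k+1}-m^k$ new parts of size $1$. The one difference is that the paper gets $S_{mq}(\lambda)'=R_{qm}(\lambda')$ and $J_{mq}(\lambda)'=M_{qm}(\lambda')$ by applying the first identities to $\lambda'$ and using that conjugation is an involution, whereas you verify them directly via $c_{r+1}=mc_r$, $c_1+c_0=mc_0$ and $\ell=t$; this is slightly longer but fully self-contained.
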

\begin{proof}
The proof of (1) for conjugate $m$-ary partitions is given in \cite{FL21}. The proof for conjugate $(m,q)$-ary partitions is essentially the same. In a conjugate $(m,q)$-ary partition $\lambda = (a_k,a_{k-1},\ldots, a_0)_m$, the partial sums of the multiplicities $\sum_{i=r}^k a_i$ must be powers of $q$. As
$R_{mq}(\lambda) = (qa_k,qa_{k-1},\ldots, qa_0)_m$,
each partial sum in $R_{mq}(\lambda)$ is now increased by one power of $q$ compared to $\lambda$. The conjugation algorithm \cite[p461]{FL21} applied to $\lambda$ leads to writing $n$ as a sum of terms that are differences of powers of $m$ multiplied by $q^r$ for some $r$. Consequently, each $b_r$ in $\lambda'$ is the difference of powers of $m$. Then conjugating $R_{mq}(\lambda)$ will raise each power $q^r$ to $q^{r+1}$ and shift each $b_r$ of $\lambda'$ one place higher leading to
$$
R_{mq}(\lambda)' = (b_\ell,b_{\ell-1},\ldots, b_1, b_0,0)_q = S_{qm}(\lambda').
$$
Swapping the roles of $\lambda$ and $\lambda'$ gives the other result in (1).

For (2), apply the conjugation algorithm  \cite[p461]{FL21} to 
$$M_{mq}(\lambda) =  (1,a_k-1,a_{k-1},\ldots, a_0)_m.$$ 
The partial sums of the multiplicities are $1 +(a_k -1) + \sum_{i=r}^{k-1} a_i = \sum_{i=r}^k a_i$, just as in conjugating $\lambda$ except the new top multiplicity at position $k+1$, which is just 1. Hence the only difference in the conjugate expression for $M_{mq}(\lambda)'$ versus $\lambda'$ is an extra $(m^{k+1}-m^k)\cdot 1$ at the end, and so
$$
M_{mq}(\lambda)' =  (b_\ell,b_{\ell-1},\ldots, b_1, b_0 + m^{k+1}-m^k)_q
$$
On the other hand, since $\lambda$ is the conjugate of $\lambda'$ and $a_k\neq 0$, the sum $\sum_{j=0}^\ell b_j = m^k$. Therefore,
$$
J_{qm}(\lambda') = (b_\ell,b_{\ell-1},\ldots, b_1, b_0 + m^{k+1}-m^k)_q = M_{mq}(\lambda)'.
$$
Again, swapping roles for $\lambda$ and $\lambda'$ gives the other result. 
\end{proof}

In the next Lemma we show that the operators commute with each other across the dual pairs.

\begin{lemma}\label{commlemma} Let $\lambda = (a_k,a_{k-1},\ldots, a_0)_m$ be an $(m,q)$-ary partition of integer $n$ with $\sum_{i=0}^k a_i = q^t$.
\begin{nlist}
\item $RS = SR$. 
\item $MS = SM$.
\item $RJ = JR$.
\item $MJ = JM$. 
\end{nlist}
\end{lemma}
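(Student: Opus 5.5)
The plan is to compute both sides of each identity directly from Definition~\ref{opdef}, writing $\lambda=(a_k,\ldots,a_0)_m$ with $\sum_i a_i=q^t$ and $a_k\neq 0$. Before comparing, we track how each operator changes the total multiplicity, since $J$ depends on it. Both $S$ and $M$ leave the total $q^t$ unchanged: $S$ only appends a zero, and $M$ moves one unit from position $k$ to position $k+1$. $R$ raises the total to $q^{t+1}$, and $J$ raises it to $q^{t+1}$ as well, because it adds $q^{t+1}-q^t$. $S$, $M$ and $R$ do not depend on the total at all, only on the multiplicity vector.

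First we handle identities (1) and (2). For (1), both $RS(\lambda)$ and $SR(\lambda)$ equal $(qa_k,\ldots,qa_0,0)_m$, since multiplying by $q$ and appending a zero clearly commute. For (2), both $MS(\lambda)$ and $SM(\lambda)$ equal $(1,a_k-1,a_{k-1},\ldots,a_0,0)_m$. The only thing to note is that $S$ keeps the leading entry $a_k$ in the top position, so $M$ still acts on that entry.

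Next we handle (3). The total multiplicity of $R(\lambda)$ is $q^{t+1}$, so
\[
JR(\lambda)=(qa_k,\ldots,qa_1,qa_0+q^{t+2}-q^{t+1})_m .
\]
In the other order, $J(\lambda)=(a_k,\ldots,a_1,a_0+q^{t+1}-q^t)_m$, and applying $R$ gives the same last entry, since $q(a_0+q^{t+1}-q^t)=qa_0+q^{t+2}-q^{t+1}$. This shift of exponent is exactly what the bookkeeping above is for. For (4), $M(\lambda)$ still has total $q^t$, so $JM(\lambda)=(1,a_k-1,\ldots,a_1,a_0+q^{t+1}-q^t)_m$. Conversely, $J$ leaves the top entry alone, so $MJ(\lambda)$ is the same vector, provided $k\ge 1$.

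The main obstacle is the degenerate case $k=0$ in (4), where $a_k$ and $a_0$ are the same entry. Here $\lambda=(q^t)_m$. Then $J(\lambda)=(q^{t+1})_m$ and $MJ(\lambda)=(1,q^{t+1}-1)_m$. In the other order, $M(\lambda)=(1,q^t-1)_m$ with total $q^t$, so $JM(\lambda)=(1,q^t-1+q^{t+1}-q^t)_m=(1,q^{t+1}-1)_m$, and the two agree. The same check applies to (2) when $k=0$ and $a_0$ becomes the top entry after shifting. Finally, one could alternatively derive (3) and (4) from (1) and (2) by conjugating via Lemma~\ref{conjlemma}, but the direct computation is simpler and avoids subscript bookkeeping.
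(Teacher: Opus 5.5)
Your proposal is correct and follows the paper's proof essentially verbatim. Both compute each composite directly from Definition~\ref{opdef}, and the key observations are the same: $R$ raises the total multiplicity to $q^{t+1}$ (needed for (3)), while $M$ preserves it (needed for (4)). Your separate check of the degenerate case $k=0$ in (4), where the top and bottom entries coincide, is a small refinement that the paper's displayed formula glosses over.
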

\begin{proof} We show each pairing has the same end result when applied to $\lambda$.
\begin{nlist}
\item It is straightforward to check that $RS(\lambda)$ and $SR(\lambda)$ result in 
$$(qa_k,qa_{k-1},\ldots, qa_0,0)_m.$$
\item Note that $MS(\lambda)$ and $SM(\lambda)$ result in $ (1,a_k-1,a_{k-1},\ldots, a_0,0)_m$.
\item We have 
$$
\begin{array}{rcl}
RJ(\lambda)  &= & R(a_k,a_{k-1},\ldots, a_1, a_0+q^{t+1}-q^t)_m \\
& = & (qa_k,qa_{k-1},\ldots, qa_1, qa_0+q^{t+2}-q^{t+1})_m
\end{array}
$$ 
while 
$$
\begin{array}{rcl}
JR(\lambda)  &= & J(qa_k,qa_{k-1},\ldots,  qa_0)_m \\
& = & (qa_k,qa_{k-1},\ldots, qa_1, qa_0+q^{t+2}-q^{t+1})_m
\end{array}
$$ 
because $R(\lambda)$ has $\sum_{i=0}^k q a_i = q^{t+1}$.
\item Note that $\sum_{i=0}^k a_i = q^t$ means that $1 + (a_k-1) + \sum_{i=0}^{k-1} a_i = q^t$ as well.  Thus, $MJ(\lambda)$ and $JM(\lambda)$ result in
$$
 (1,a_k - 1,a_{k-1},\ldots, a_1, a_0+q^{t+1}-q^t)_m 
$$
\end{nlist}
\end{proof}

On the other hand $R$ and $M$ do not commute nor do $S$ and $J$.

\begin{example} Let $m,q\geq 2$.  Again, let $\lambda = (a_k,a_{k-1},\ldots, a_0)_m$ be an $(m,q)$-ary partition of the integer $n$ with $\sum_{i=0}^k a_i = q^t$.
\begin{nlist}
\item For $R$ and $M$ note that 
$$
RM(\lambda) = R(1,a_k-1,a_{k-1},\ldots, a_0)_m = (q,qa_k-q,qa_{k-1},\ldots, qa_0)_m,
$$
which is a partition of $qn + qm^{k+1} - qm^k$ while
$$
MR(\lambda) = M(qa_k,qa_{k-1},\ldots, qa_0)_m = (1,qa_k - 1,qa_{k-1},\ldots, qa_0)_m,
$$
which is a partition of $qn+m^{k+1}-m^k$. Thus $RM \neq MR$. 
\item For $S$ and $J$ note that
\begin{eqnarray*}
    SJ(\lambda) &=& S(a_k,a_{k-1},\ldots, a_1, a_0+q^{t+1}-q^t)_m \\
    &=& (a_k,a_{k-1},\ldots, a_1, a_0+q^{t+1}-q^t,0)_m,    
\end{eqnarray*}
which is a partition of  $mn + mq^{t+1}-mq^t$ while
$$
JS(\lambda) = J(a_k,a_{k-1},\ldots, a_1, a_0,0)_m = (a_k,a_{k-1},\ldots, a_1, a_0,q^{t+1}-q^t)_m
$$
which is a partition of $mn + q^{t+1}-q^t$. Thus $SJ\neq JS$.
\end{nlist}
\end{example}

We are almost ready to prove that each conjugate $(m,q)$-ary partition is uniquely generated by a sequence of either $R$ and $M$ operators or by a sequence of $S$ and $J$ operators. We establish a lemma first.

\begin{lemma} \label{dividelemma}
Let $\lambda = (a_k,a_{k-1},\ldots, a_0)_m$ be a conjugate $(m,q)-ary$ partition. Then $a_k$ divides $a_j$ for all $j$. 
\end{lemma}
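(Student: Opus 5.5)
The plan is to apply the Proposition characterizing conjugate $(m,q)$-ary partitions through their partial sums. For $0\le r\le k$ write $P_r=\sum_{i=r}^k a_i$. The Proposition says every $P_r$ is a power of $q$, say $P_r=q^{e_r}$. In particular the top partial sum is $P_k=a_k=q^{e_k}$, so $a_k$ is itself a power of $q$. (If $a_k=0$ we would have $P_k=0$, which is not a power of $q$. So in a conjugate $(m,q)$-ary partition the top multiplicity is automatically nonzero, and we may assume $a_k\ge 1$.)

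The partial sums are nondecreasing as $r$ decreases, since $P_r=P_{r+1}+a_r\ge P_{r+1}$. Hence the exponents satisfy $e_r\ge e_{r+1}\ge\cdots\ge e_k$, and therefore each $P_r=q^{e_r}$ is divisible by $q^{e_k}=a_k$. This gives divisibility of the partial sums for free.

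To pass to the multiplicities themselves, write $a_j=P_j-P_{j+1}=q^{e_j}-q^{e_{j+1}}$ for $0\le j<k$. Both terms are divisible by $q^{e_k}=a_k$, because $e_j,e_{j+1}\ge e_k$, so $a_k\mid a_j$. The case $j=k$ is trivial. This also covers the possibility $a_j=0$, which occurs when $e_j=e_{j+1}$.

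There is no real obstacle. The only point needing care is noting that $a_k$ is a power of $q$, with $a_k\neq 0$ as observed above, so that the monotonicity of the exponents turns into divisibility.
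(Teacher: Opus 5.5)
Your proposal is correct and follows essentially the same argument as the paper. In both, $a_k=q^{e_k}$ is a power of $q$, the partial sums form a non-decreasing chain of powers of $q$ each divisible by $a_k$, and so each difference $a_j=P_j-P_{j+1}$ is divisible by $a_k$ (your explicit check that $a_k\neq 0$ is a small point the paper leaves implicit).
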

\begin{proof}
First note that $a_k = q^{t_0}$ for some $t_0$ and $a_k+a_{k-1} = q^{t_1}$ with $t_1\geq t_0$. Thus, $a_k = q^{t_0}$ divides $q^{t_1}$ and so divides $a_{k-1}$ as a result. As partial sums $a_k + \cdots + a_j$ form a non-decreasing sequence of powers of $q$, we see that $a_k$ divides all $a_j$.
\end{proof}

Now we have the tools for the theorem.

\begin{theorem} \label{repthm}
Let $m,q\geq 2$. Let $\lambda = (a_k,a_{k-1},\ldots, a_0)_m$ be a conjugate $(m,q)-ary$ partition. Then $\lambda$ is generated by a unique sequence of $R_{mq}$ and $M_{mq}$ operators applied to the trivial partition $(1)_m$ of 1. Also, $\lambda$ is generated by a unique sequence of $S_{mq}$ and $J_{mq}$ operators.
\end{theorem}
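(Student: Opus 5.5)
Plan: induction on $n$ (or on $\sum a_i + k$), giving an explicit inverse step for the $R,M$ generation; the $S,J$ statement then follows from Lemma~\ref{conjlemma}.

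\textbf{Existence for $R,M$.} Let $\lambda=(a_k,\dots,a_0)_m$ be a conjugate $(m,q)$-ary partition with $a_k\neq 0$, and suppose $\lambda\neq(1)_m$. Split into two cases according to the top multiplicity.

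If $a_k=1$ and $k\ge 1$, set $\mu=(a_{k-1}+1,a_{k-2},\dots,a_0)_m$. Its partial sums are exactly the partial sums of $\lambda$ from position $k-1$ downward, so they are powers of $q$. Hence $\mu$ is conjugate $(m,q)$-ary, $M(\mu)=\lambda$, and $\mu$ partitions the smaller integer $n-(m^k-m^{k-1})$.

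If $a_k\ge 2$, then $a_k=q^{t_0}$ with $t_0\ge1$. By Lemma~\ref{dividelemma} every $a_j$ is divisible by $q$, so $\mu=(a_k/q,\dots,a_0/q)_m$ makes sense. Its partial sums are those of $\lambda$ divided by $q$; each is a power of $q$, and it is at least $1$ because every partial sum of $\lambda$ is at least $a_k\ge q$. Thus $R(\mu)=\lambda$ and $\mu$ partitions $n/q<n$.

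The remaining case, $a_k=1$ with $k=0$, is $\lambda=(1)_m$ itself. Induction on $n$ therefore writes $\lambda$ as a word in $R,M$ applied to $(1)_m$.

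\textbf{Uniqueness for $R,M$.} This is the step needing care: the last operator applied must be determined by $\lambda$. Any $M(\mu)$ has top multiplicity $1$ and length at least $2$. Any $R(\mu)$ has top multiplicity $q\cdot(\text{top of }\mu)\ge q\ge2$. The two cases are therefore disjoint, and $(1)_m$ lies in the image of neither: it has length $1$ and top multiplicity $1$.

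One point needs checking here. The definition of $M$ applied to a $\mu$ with top entry $1$ produces a leading $0$ (for example $M(1)_m=(1,0)_m$). We normalize by never writing leading zeros, so that the top multiplicity of $M(\mu)$ is $1$ at position $k+1$. The top of $R(\mu)$ is then always at least $q$.

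Since $R$ and $M$ are one-to-one (Remark~\ref{usefulremark}), the predecessor $\mu$ is uniquely determined by $\lambda$. Induction on $n$ now gives a unique word. The process terminates because $n$ strictly decreases at each step; note that $(1)_m$ is the only partition of $1$.

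\textbf{The $S,J$ statement.} Apply the $R,M$ result to the conjugate $(q,m)$-ary partition $\lambda'$ to write $\lambda'=X_1\cdots X_r(1)_q$ with each $X_i\in\{R_{qm},M_{qm}\}$. Conjugating and using Lemma~\ref{conjlemma} repeatedly turns this into $\lambda=Y_1\cdots Y_r(1)_m$, where $Y_i=S_{mq}$ when $X_i=R_{qm}$ and $Y_i=J_{mq}$ when $X_i=M_{qm}$; here $(1)_q'=(1)_m$. Conjugation is an involution, so distinct $S,J$-words for $\lambda$ would give distinct $R,M$-words for $\lambda'$. Uniqueness therefore transfers directly.

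Alternatively, one can argue directly. $\lambda$ lies in the image of $S$ exactly when $a_0=0$. It lies in the image of $J$ exactly when $a_0\ge q^t-q^{t-1}$, with $t\ge 1$ and $\sum a_i=q^t$. These conditions are exclusive because $a_0=0$ is smaller than $q^t-q^{t-1}$. The Lemma~\ref{conjlemma} route avoids checking these conditions.
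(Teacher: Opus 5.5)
Your proof is correct and uses the same ingredients as the paper's. Lemma~\ref{dividelemma} divides out $q$, the top multiplicity ($1$ after $M$, at least $q$ after $R$) identifies the last operator applied, and Lemma~\ref{conjlemma} passes the result to $S,J$. The differences are only organizational: the paper peels off a whole block $R^tM$ by induction on $k$ and checks $S,J$-uniqueness directly via the rightmost multiplicity, while your one-operator-at-a-time induction on $n$ also makes explicit that no nonempty word yields $(1)_m$, a case the paper's ``leftmost operators differ'' reduction leaves implicit. Your ``leading zero'' caveat is moot, since the $0$ in $M(1)_m=(1,0)_m$ is not leading, though the convention $a_k\neq 0$ is the right one.
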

\begin{proof}
The existence proof uses induction on the number of multiplicities $k$ in $\lambda$. We focus on the $R_{mq}$ and $M_{mq}$ operators for now. If $k=0$, then $\lambda = (a_0)_m$, where $a_0 = q^t$ for some $t$. Thus, $\lambda = R_{mq}^t(1)_m$. Suppose $k\geq 0$ and that all conjugate $(m,q)$-ary partitions of the form $(a_k,a_{k-1},\ldots, a_0)_m$ are generated by a sequence of $R_{mq}$ and $M_{mq}$ operators applied to $(1)_m$. Let $\lambda = (a_{k+1},a_k,a_{k-1},\ldots, a_0)_m$ be a conjugate $(m,q)$-ary partition. Then $a_{k+1} = q^t$ for some $t\geq 0$ and $a_{k+1} + a_{k} = q^s$ for $s\geq t$. Thus, 
$$
a_k = q^s - q^t = q^t(q^{s-t} - 1)
$$
By Lemma \ref{dividelemma}, all $a_j$ are divisible by $q^t$ so that 
$$
\lambda = R_{mq}^t(1,q^{s-t}-1, b_{k-1}, \ldots, b_0)_m = R_{mq}^t M_{mq}(q^{s-t},b_{k-1},\ldots,b_0)_m.
$$ 
By the inductive hypothesis, the partition $(q^{s-t},b_{k-1},\ldots,b_0)_m$ is generated by a sequence of $R_{mq}$ and $M_{mq}$ operators so that $\lambda$ is as well. 

To generate such a partition using $S_{mq}$ and $J_{mq}$ operators, we first generate the conjugate partition $\lambda'$ from a sequence of $R_{qm}$ and $M_{qm}$ operators by the first part of the proof. Applying conjugation to that sequence and repeatedly using Lemma \ref{conjlemma} results in a sequence of $S_{mq}$ and $J_{mq}$ operators that generate $\lambda$. Since $(1)_m' = (1)_q = (1)_m$, we are still starting with the trivial partition of 1. 

To prove that these sequences are unique, suppose that $A_1$ and $A_2$ are different sequences of either $R_{mq}$ and $M_{mq}$ operators (or $S_{mq}$ and $J_{mq}$ operators) such that $A_1(1)_m = A_2(1)_m$. Using the one-to-one property (Remark \ref{usefulremark}), we can assume without loss of generality that the leftmost operators in $A_1$ and $A_2$ differ from each other. For $R_{mq}$ and $M_{mq}$, one partition would have $q^t$, $t>0$, as its top multiplicity while the other would have $1$, a contradiction. For $S_{mq}$ and $J_{mq}$, one partition would have $0$ as its rightmost multiplicity while the other would be at least $q^{t+1}-q^t$ for $t\geq 0$, another contradiction. 
\end{proof}

Given the previous theorem, we can focus on one pair of these generating operators. We choose to focus on $R$ and $M$. 

\begin{example} \label{tree}
The tree below illustrates how all conjugate $(5,3)$-ary partitions can be eventually generated using the $M= M_{53}$ and $R = R_{53}$ operators. 
Looking ahead toward later sections, it is worth noting that $(1,0,2)_5 = M^2R(1)_5$ and $(27)_5 = R^3(1)_5$ both generate partitions of $27$. Counting how many conjugate $(m,q)$-ary partitions a given integer $n$ has (if any) appears to be very challenging and relates to subtle modular arithmetic relationships between $m$ and $q$. 
\end{example}
\begin{figure}[h]
\centering
\includegraphics[scale=0.7]{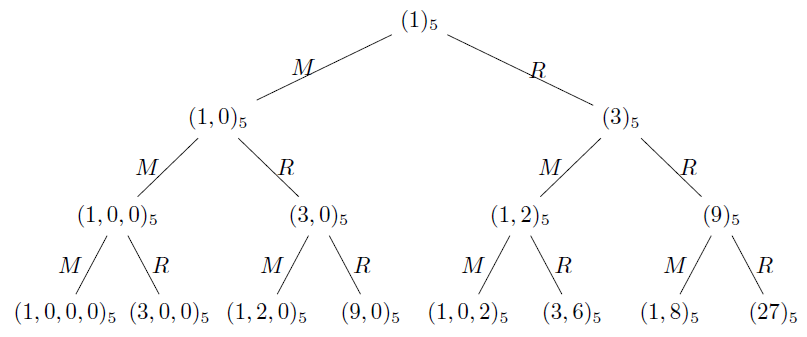}
\caption{Start of the tree for all conjugate $(5,3)$-ary partitions generated by $M$ and $R$ operators}
\end{figure}

While the operators $R$ and $M$ do not commute with each other, we can quantify how many total copies of $R$ or $M$ occur in a generating sequence for a conjugate $(m,q)$-ary partition.

\begin{corollary} \label{opcount}
Let $m,q\geq 2$. Let $\lambda = (a_k,a_{k-1},\ldots, a_0)_m$ be a conjugate $(m,q)$-ary partition with $a_k\neq 0$ and $\sum_{i=0}^k a_i = q^t$. Then the generating sequence for $\lambda$ contains 
\begin{nlist}
\item $k$ operations of $M$, and
\item $t$ operations of $R$
\end{nlist}
in some order. 
\end{corollary}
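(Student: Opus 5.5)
We track two invariants along any generating sequence, using Theorem \ref{repthm}. For a conjugate $(m,q)$-ary partition $\mu=(c_j,\ldots,c_0)_m$ with $c_j\neq 0$, write $\ell(\mu)=j$ for the index of its top nonzero multiplicity and $\sigma(\mu)=\log_q\sum_i c_i$ for the exponent of the total multiplicity; this sum is a power of $q$ by the Proposition. By Theorem \ref{repthm}, $\lambda=A(1)_m$ for a unique word $A$ in $R=R_{mq}$ and $M=M_{mq}$, and the starting partition $(1)_m$ has $\ell=0$ and $\sigma=0$. It therefore suffices to see how each operator changes $\ell$ and $\sigma$.

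First consider $R$. Applying $R$ to $\mu$ multiplies every multiplicity by $q$. The top nonzero index therefore stays at $j$, so $\ell(R\mu)=\ell(\mu)$. The total multiplicity becomes $q\cdot q^{\sigma(\mu)}$, so $\sigma(R\mu)=\sigma(\mu)+1$.

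Next consider $M$. We have $M(\mu)=(1,c_j-1,c_{j-1},\ldots,c_0)_m$, whose top entry $1$ is nonzero and sits in position $j+1$. Hence $\ell(M\mu)=\ell(\mu)+1$. The new multiplicities still sum to $1+(c_j-1)+\sum_{i<j}c_i=\sum_i c_i$, as already noted in the proof of Lemma \ref{commlemma}(4), so $\sigma(M\mu)=\sigma(\mu)$. One point needs care: the definition of $M$ uses the written top entry $a_k$. We must make sure this is the top \emph{nonzero} entry, so that no leading zeros break the count of $\ell$. Every partition reached from $(1)_m$ has top entry either $1$ (after an $M$) or $q$ times a nonzero entry (after an $R$), so by induction the top entry is always nonzero, and $\ell$ is exactly the index $k$ of the written form.

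An induction on the length of $A$ now shows that $\ell(A(1)_m)$ equals the number of $M$'s in $A$ and $\sigma(A(1)_m)$ equals the number of $R$'s. Since $a_k\neq 0$ and $\sum a_i=q^t$, we have $\ell(\lambda)=k$ and $\sigma(\lambda)=t$, which gives both claims. The argument is essentially bookkeeping. The only delicate point is the nonvanishing of the top entry handled above, which is where the hypothesis $a_k\neq 0$ is used; uniqueness of $A$ from Theorem \ref{repthm} is what makes ``the generating sequence'' well defined.
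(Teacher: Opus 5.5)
Your proof is correct. For the count of $M$'s it matches the paper's argument: only $M$ creates a new top position and $R$ does not. You add the extra check that the written top entry never vanishes.

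For the count of $R$'s you take a different route. You track the total multiplicity $\sum_i a_i$, which $R$ multiplies by $q$ and $M$ preserves, so $\sum_i a_i=q^{\#R}$ follows at once. The paper instead works with the bottom multiplicity. It writes $a_0=q^t$ or $a_0=q^s(q^{t-s}-1)$ and argues that the word must begin, on the right, with $R^{t-s}$ followed by one $M$. It then notes that later $M$'s do not touch $a_0$, so $s$ further $R$'s are needed.

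The paper's route has a small bonus: it reads off the length $t-s$ of the initial $R$-block. However, it needs a case split, and its justification via $a_0$ says nothing when $a_0=0$. In that case $s=t$, and $a_0$ stays $0$ no matter how many $R$'s follow the first $M$.

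Your invariant handles all cases uniformly, including $a_0=0$. It also shows that \emph{every} word generating $\lambda$ contains $k$ copies of $M$ and $t$ copies of $R$. Uniqueness of the generating word is needed only to make the phrase ``the generating sequence'' meaningful.
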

\begin{proof}
The $R$ operator changes the sizes of the multiplicities but only the $M$ operator can add an additional multiplicity. Starting from $(1)_m$, we need $k$ operations of $M$ to produce a partition with $a_k \neq 0$. Next, we note that when $\sum_{i=0}^k a_i = q^t$, then either $a_0 = q^t$ or $a_0 = q^t - q^s = q^s(q^{t-s}-1)$ for some $s$. In the first case, $\lambda = (q^t)_m = R^t(1)_m$. In the second case, observe that $MR^{t-s}(1)_m = (1,q^{t-s}-1)_m$. Since generating sequences are unique, we see that the generating sequence for $\lambda$ must have started with $t-s$ operations of R followed by one M. From then on, $M$ operators have no effect on the rightmost position while $s$ operations of $R$ will be needed to get to $a_0 =  q^s(q^{t-s}-1)$. In either case $t$ operations of $R$ are needed. 
\end{proof}

Using these sequences of operators, we can classify conjugate $(m,q)$-ary partitions including the notion of simple partitions as used in \cite{FL21}.

\begin{definition}\cite[Definition 6.1]{FL21}
 A conjugate $(m,q)$-ary partition is \textbf{simple} if and only if it is neither a raise nor a shift of another such partition. 
\end{definition}

\begin{corollary} \label{repcor}
If $\lambda$ is a conjugate $(m,q)$-ary partition, then $\lambda$ can be written uniquely as $\lambda = M^{s_k}R^{t_k}\cdots M^{s_1}R^{t_1}(1)_m$, where $s_k, t_1 \geq 0$ but all other exponents are strictly positive. Moreover, $\lambda$ is simple if and only if $s_k, t_1 > 0$
\end{corollary}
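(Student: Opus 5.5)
The plan is to derive the first claim directly from Theorem~\ref{repthm}, and then to settle simplicity by checking the leftmost and rightmost operators separately. By Theorem~\ref{repthm}, $\lambda$ equals $A(1)_m$ for a unique word $A$ in the letters $R=R_{mq}$ and $M=M_{mq}$.

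Any such word can be grouped into maximal blocks of consecutive equal letters. Reading right to left, the blocks alternate between $R$-blocks and $M$-blocks. We pad with an empty $R$-block on the right if the word ends in $M$, and with an empty $M$-block on the left if it begins with $R$. This gives exactly the form $M^{s_k}R^{t_k}\cdots M^{s_1}R^{t_1}$, where only $s_k$ and $t_1$ may vanish. The trivial partition itself is the case $k=1$, $s_1=t_1=0$. Uniqueness of the exponents follows because distinct exponent data with this positivity convention give distinct words, and the word is unique by Theorem~\ref{repthm}.

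For the simplicity statement, first treat raises. If $t_1=0$ and $k\ge 2$, the generating word ends in $M^{s_1}$ with $s_1>0$. In that case $a_0=0$: indeed $M^{s_1}(1)_m=(1,0,\ldots,0)_m$, and later $M$'s and $R$'s never change a zero last multiplicity, since $R$ multiplies by $q$ and $M$ only alters the top entries. Hence $\lambda=S(\mu)$ for the partition $\mu$ obtained by deleting that trailing $0$.

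\textbf{The hardest step} is to show that $\mu$ is again a conjugate $(m,q)$-ary partition. This holds because its partial sums are the same powers of $q$ as those of $\lambda$. Conversely, if $\lambda=S(\mu)$, we write $\mu=B(1)_m$ and use $MS=SM$ and $RS=SR$ from Lemma~\ref{commlemma} to get $\lambda=B\,S(1)_m=BM(1)_m$, using Remark~\ref{usefulremark}(3) with $r=1$. By uniqueness, $\lambda$'s word then ends in $M$, so $t_1=0$. Similarly, $\lambda$ is a raise exactly when its word begins with $R$, that is, when $s_k=0$. This is immediate from the uniqueness in Theorem~\ref{repthm}: if $\lambda=R(\mu)$, prepend $R$ to $\mu$'s word.

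Combining the two, $\lambda$ is neither a raise nor a shift if and only if $s_k>0$ and $t_1>0$. The degenerate case of the trivial partition, where both exponents are $0$, must be handled by convention: $(1)_m$ is not a raise or a shift, yet it has $s_1=t_1=0$. I would note this as an exception, or assume $\lambda\neq(1)_m$.
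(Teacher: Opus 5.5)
Your proof is correct and follows essentially the paper's argument. The first claim comes from the uniqueness in Theorem~\ref{repthm}. A raise corresponds to a leading $R$. A shift corresponds to a trailing $M$, via $M(1)_m=S(1)_m$ and commuting $S$ leftward with Lemma~\ref{commlemma}. The one variation is that you prove ``$t_1=0\Rightarrow$ shift'' by deleting the trailing zero multiplicity. The identity $BM(1)_m=BS(1)_m=S\bigl(B(1)_m\bigr)$, which you already use for the converse, gives this direction immediately, with $\mu=B(1)_m$ automatically a conjugate $(m,q)$-ary partition, so your ``hardest step'' is not needed.

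Your caveat about $(1)_m$ is also right, and the paper's proof overlooks it. The trivial partition is neither a raise nor a shift, hence simple, yet its only representation has $s_1=t_1=0$. So the equivalence in the second claim holds only for $\lambda\neq(1)_m$.
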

\begin{proof}
The first claim follows directly from Theorem \ref{repthm}. For the second claim note that $\lambda$ is the raise of another partition if and only if $s_k = 0$ so that its generating sequence has an $R$ on the far left. Similarly, $t_1 = 0$ if and only if the sequence for $\lambda$ starts with $M$. By Remark \ref{usefulremark}, that is equivalent to the same sequence but replacing the initial $M$ with an $S$. By Lemma \ref{commlemma}, that $S$ can commute to the left-most position of the generating sequence. Thus, $t_1= 0$ if and only if $\lambda$ can be written as a shift of another partition. 
\end{proof}

We can also understand the conjugation operation in a new way. Essentially, running the exponent sequence in the reverse order leads to the conjugate. 

\begin{corollary} \label{conjcor}
Let $\lambda = M_{mq}^{s_k}R_{mq}^{t_k}\cdots M_{mq}^{s_1}R_{mq}^{t_1}(1)_m$ be a conjugate $(m,q)$-ary partition. Then $\lambda' =  M_{qm}^{t_1}R_{qm}^{s_1}\cdots M_{qm}^{t_k}R_{qm}^{s_k}(1)_q$ is its conjugate $(q,m)$-ary partition. 
\end{corollary}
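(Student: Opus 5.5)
The approach is to conjugate the generating sequence operator by operator using Lemma~\ref{conjlemma}. This turns the $R,M$ word for $\lambda$ into an $S,J$ word for $\lambda'$. Remark~\ref{usefulremark} and Lemma~\ref{commlemma} then convert that word back into an $R,M$ word in the reverse order.

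\emph{Step 1: conjugate the word.} Write $\lambda = M_{mq}^{s_k}R_{mq}^{t_k}\cdots M_{mq}^{s_1}R_{mq}^{t_1}(1)_m$. Applying Lemma~\ref{conjlemma} repeatedly from the outside in gives
\[
\lambda' = J_{qm}^{s_k}S_{qm}^{t_k}\cdots J_{qm}^{s_1}S_{qm}^{t_1}(1)_q .
\]
Each intermediate partition has nonzero top multiplicity, so the hypothesis $a_k\neq 0$ of Lemma~\ref{conjlemma} holds at every stage. We also use $(1)_m'=(1)_q$, as in the proof of Theorem~\ref{repthm}.

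\emph{Step 2: swap the roles of the letters.} By Lemma~\ref{commlemma}, every $S$ commutes with every $J$'s partner $R$, and every $J$ commutes with $M$. The goal is to replace the block $S^{t_j}$ by $M^{t_j}$ and the block $J^{s_j}$ by $R^{s_j}$, reversing the order of the blocks. The cleanest route is an induction on the number $k$ of blocks. I would prove the stronger statement that for any $R,M$ word $W$ applied to $(1)$, the conjugate of $W(1)$ equals $\overline{W}(1)$, where $\overline{W}$ is $W$ reversed.

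For the inductive step, peel off the innermost block $R^{t_1}$ (or $M^{s_1}$) rather than the outermost. Remark~\ref{usefulremark} gives $R^{t}(1)=J^{t}(1)$ and $M^{s}(1)=S^{s}(1)$. So the innermost $R^{t_1}(1)_m$ may be rewritten as $J_{mq}^{t_1}(1)_m$. By Lemma~\ref{commlemma}, $J$ commutes with both $R$ and $M$, so this $J^{t_1}$ can be moved to the far left: $\lambda = J^{t_1}\mu$, where $\mu$ is generated by the shorter word with $R^{t_1}$ deleted. Then Lemma~\ref{conjlemma} gives $\lambda' = M_{qm}^{t_1}\mu'$. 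By induction, $\mu'$ is the reversed word of $\mu$ applied to $(1)_q$, which yields the claimed expression. The case of an innermost block $M^{s_1}$ is handled the same way: replace it by $S^{s_1}$, commute $S$ to the front, and conjugate it to $R_{qm}^{s_1}$.

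\emph{Main obstacle.} The delicate point is the commutation step. Lemma~\ref{commlemma} is stated for a single application to a conjugate $(m,q)$-ary partition, and $J$'s definition depends on the current total $q^t$ of the partition it acts on. So I must check that moving $J$ past each operator is valid at every intermediate partition; it is, because the lemma holds for arbitrary $\lambda$. A second point is that the identity $R^t(1)=J^t(1)$ only applies when the operator acts directly on $(1)_m$. This is why the induction peels the innermost block and not the outermost.

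Finally, the resulting word $M^{t_1}R^{s_1}\cdots M^{t_k}R^{s_k}$ has the same form as in Corollary~\ref{repcor}. Uniqueness from Theorem~\ref{repthm} is then automatic, although it is not needed for the identity itself.
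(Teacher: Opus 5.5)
Your argument is correct and is essentially the paper's induction run in mirror image. The paper peels off the \emph{outermost} operator, conjugates it to $S_{qm}$ or $J_{qm}$, commutes it inward and applies Remark~\ref{usefulremark} at $(1)_q$. You instead apply Remark~\ref{usefulremark} at $(1)_m$ to the \emph{innermost} operator, commute the resulting $J_{mq}$ or $S_{mq}$ outward, and only then conjugate via Lemma~\ref{conjlemma}. Two small fixes are needed. First, your $\overline{W}$ must mean $W$ reversed \emph{with $R$ and $M$ interchanged}, equivalently the exponent sequence reversed; literal reversal already fails for $W=R$. Second, the induction is cleanest on the length of the word, since peeling $R^{t_1}$ does not reduce the number of $M^{s_j}R^{t_j}$ pairs.
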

\begin{proof}
We use induction on the sum $s_k+t_k+\cdots + s_1+t_1$. If the sum is 1, then either $t_1 = 1$ or $s_1 = 1$. In the first case, $\lambda = R_{mq}(1)_m$. By Lemma \ref{conjlemma} and Remark \ref{usefulremark}, $\lambda' = S_{qm}(1)_q = M_{qm}(1)_q$. If $\lambda = M_{mq}(1)_m$, then $\lambda' = J_{qm}(1)_q = R_{qm}(1)_q$. Suppose that the result is true when the sum of exponents is $s_k+t_k+\cdots + s_1+t_1$. Consider $\lambda$ that is generated by one additional operator. We can then assume that either $\lambda = R_{mq} M_{mq}^{s_k}R_{mq}^{t_k}\cdots M_{mq}^{s_1}R_{mq}^{t_1}(1)_m$  or $\lambda = M_{mq}^{s_k+1}R_{mq}^{t_k}\cdots M_{mq}^{s_1}R_{mq}^{t_1}(1)_m$ (where $s_k$ might be 0). Then applying Lemma \ref{conjlemma} and the inductive hypothesis, we have $\lambda'$ is either $S_{qm}M_{qm}^{t_1}R_{qm}^{s_1}\cdots M_{qm}^{t_k}R_{qm}^{s_k}(1)_q$ or $J_{qm}M_{qm}^{t_1}R_{qm}^{s_1}\cdots M_{qm}^{t_k}R_{qm}^{s_k}(1)_q$. We then use Lemma \ref{commlemma} to commute the $S$ or $J$ to the far right giving $M_{qm}^{t_1}R_{qm}^{s_1}\cdots M_{qm}^{t_k}R_{qm}^{s_k}S_{qm}(1)_q$ or $M_{qm}^{t_1}R_{qm}^{s_1}\cdots M_{qm}^{t_k}R_{qm}^{s_k}J_{qm}(1)_q$. Finally use Remark \ref{usefulremark} to trade out the $S$ or $J$ in the front now as an $M$ or $R$, giving $\lambda' = M_{qm}^{t_1}R_{qm}^{s_1}\cdots M_{qm}^{t_k}R_{qm}^{s_k}M_{qm}(1)_q$ or $\lambda' = M_{qm}^{t_1}R_{qm}^{s_1}\cdots M_{qm}^{t_k}R_{qm}^{s_k+1}(1)_q$. 
\end{proof}

Looking at the CMPs  of \cite{FL21}, we can easily detect self-conjugate $(m,m)$-ary partitions. In the statement below all operators could be written with $mm$ subscripts, but we omit them to simplify the statement. 

\begin{corollary} \label{selfconj}
Let $\lambda = M^{s_k}R^{t_k}\cdots M^{s_1}R^{t_1}(1)_m$ be a conjugate $(m,m)$-ary partition, where $s_k, t_1 \geq 0$ but all other exponents are strictly positive. Then $\lambda = \lambda'$ if and only if the sequence $(s_k,t_k,\ldots,s_1,t_1)$ is a palindrome. 
\end{corollary}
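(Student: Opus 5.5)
The plan is to combine Corollary \ref{conjcor} with the uniqueness statements in Theorem \ref{repthm} and Corollary \ref{repcor}, specialized to $m=q$. With $m=q$, the operators $R_{mq}$ and $R_{qm}$ are the same operator, and likewise $M_{mq}$ and $M_{qm}$. Also $(1)_q=(1)_m$. So Corollary \ref{conjcor} says that if
$$\lambda = M^{s_k}R^{t_k}\cdots M^{s_1}R^{t_1}(1)_m,$$
then
$$\lambda' = M^{t_1}R^{s_1}\cdots M^{t_k}R^{s_k}(1)_m,$$
and both are conjugate $(m,m)$-ary partitions generated from the same starting partition by the same two operators.

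Next I will check that the expression for $\lambda'$ is in the normal form of Corollary \ref{repcor}. Its outermost exponent is $t_1\geq 0$ and its innermost exponent is $s_k\geq 0$. Every other exponent is one of $s_1,\ldots,s_{k-1},t_2,\ldots,t_k$, and all of these are strictly positive by hypothesis. So the sequence of operators for $\lambda'$, read as a word, has exponent sequence $(t_1,s_1,t_2,s_2,\ldots,t_k,s_k)$, which is exactly the reverse of $(s_k,t_k,\ldots,s_1,t_1)$.

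Both directions now follow.

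\textbf{($\Leftarrow$)} Suppose the sequence is a palindrome. Then the words for $\lambda$ and $\lambda'$ coincide, so $\lambda=\lambda'$.

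\textbf{($\Rightarrow$)} Suppose $\lambda=\lambda'$. The uniqueness of the generating sequence (Theorem \ref{repthm}, in the normal form of Corollary \ref{repcor}) forces the two words to be identical. Their exponent sequences must therefore agree, which means the sequence equals its reverse.

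The main subtlety is boundary zeros. If $s_k=0$ or $t_1=0$, the word omits a block, and the exponent list could have a different length or be misaligned. To handle this I will compare words in the free monoid on $\{M,R\}$, not raw exponent lists. A word in normal form determines its exponent list uniquely once we fix the convention that the list always begins with an $M$-exponent and ends with an $R$-exponent, padding with zeros at the ends if needed. The reversal described above respects this convention. Hence equality of the words is equivalent to the padded sequence $(s_k,t_k,\ldots,s_1,t_1)$ being a palindrome. This is the interpretation the statement intends.
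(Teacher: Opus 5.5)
Your proof is correct and is the argument the paper intends. The paper states this corollary without proof, as an immediate consequence of Corollary~\ref{conjcor} specialized to $m=q$ together with the uniqueness of generating sequences in Theorem~\ref{repthm} and Corollary~\ref{repcor}. Your extra care with boundary zeros is harmless but not really needed, because the sequence $(s_k,t_k,\ldots,s_1,t_1)$ in the statement already uses the padded convention $s_k,t_1\geq 0$.
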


\section{Characteristic Polynomials}

Given that all conjugate $(m,q)$-ary partitions $\lambda$ are uniquely generated by sequences of $R$ and $M$ operators, we look closer at the number $n$ that is partitioned by $\lambda$. If we treat $m$ and $q$ as independent algebraic indeterminates, we can associate a unique polynomial function with each generating sequence and thus with each conjugate $(m,q)$-ary partition.

\begin{definition} \label{polyndef}
Let $m,q\geq 2$, and let $A$ denote a sequence of $R$ and $M$ operators. Then $\lambda = A(1)_m  = (a_k,a_{k-1},\ldots,a_0)_m$ is a conjugate $(m,q)$-ary partition, where $a_k = q^{t_k}$ and $a_i = q^{t_i} - q^{t_{i+1}}$.  The \textbf{characteristic polynomial} of $A$, denoted by $\chi(A)$, is
$$
\chi(A) = \sum_{i=0}^k a_i m^i = q^{t_k}m^k + \sum_{i=0}^{k-1} (q^{t_i} - q^{t_{i+1}})m^i
$$
Note that $\chi(A) \in \zz[m,q]$. 
\end{definition}

\begin{example}
Consider conjugate $(5,3)$-ary partitions as in Example~\ref{tree}. We saw that $M^2R(1)_5 = (1,0,2)_5$ and $R^3(1)_5 = (27)_5$ are both partitions of 27. When we view $m$ and $q$ as indeterminates, we see that 
$$
\chi(M^2 R) = 1\cdot m^2 + 2 m^0 = q^0 m^2 +  (q^0 - q^0) m + (q^1 - q^0) m^0 = m^2 + q - 1
$$
and
$$
\chi(R^3) = q^3 m^0 = q^3 ,
$$
where both polynomials give 27 when evaluated at $(m,q) = (5,3)$ but represent different conjugate $(5,3)$-ary partitions of 27.  The polynomials in terms of $m$ and $q$ are independent of the actual values of $m$ or $q$ chosen. 
\end{example}

These polynomials give a different way to classify the conjugate $(m,q)$-ary partitions, but they also give a more direct method of identifying the number $n$ being partitioned. If one evaluates $\chi(A)$ at a specific pair of values $(m,q)$, then the result is this number $n$. Our later attempts to discover cases where a common number $n$ has multiple conjugate $(m,q)$-ary partitions will stem from finding cases when different polynomials evaluate to the same value $n$ as in the previous example. 

\begin{example}
Consider a general pair $(m,q)$. As $M^2R^2(1)_m = (1,0,q^2-1)_m$, 
$$
\chi(M^2R^2) = m^2 + (q^2-1)m^0 = m^2+q^2-1.
$$
As $MRMR(1)_m = (1,q-1,q^2-q)_m$, we have
$$
\chi(MRMR) = m^2 + (q-1)m + (q^2-q)m^0 = m^2 - m + mq - q + q^2.
$$

\end{example}

With the help of the $R$ and $M$ operators, we can classify all such characteristic polynomials. The next lemma shows that studying simple partitions are sufficient as tacking $R$ operators at the end or $M$ at the beginning (equivalently, adding shifts at the end) results in simple multiplication by $q$ or $m$.

\begin{lemma}
Let $m,q\geq 2$. Let $A$ be any sequence of $R$ and $M$ operators, generating a conjugate $(m,q)$-ary partition $\lambda$. Then $\chi(R^tAM^s) = m^sq^t\chi(A)$.
\end{lemma}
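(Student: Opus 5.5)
The plan is to prove $\chi(R^tAM^s) = m^sq^t\chi(A)$ by treating the two sides of $A$ separately: the $R^t$ on the left, and the $M^s$ on the right. Both cases reduce, by induction on $t$ and on $s$, to a single application of $R$ or $M$. Throughout I identify $\chi(A)$ with the polynomial $\sum_i a_i m^i$ read off from $A(1)_m = (a_k,\ldots,a_0)_m$. Here the multiplicities $a_i$ are themselves polynomials in $q$: each is either $q^{t_k}$ or a difference $q^{t_i}-q^{t_{i+1}}$, with exponents determined by the sequence $A$ alone.

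\textbf{The left factor $R^t$.} If $A(1)_m = (a_k,\ldots,a_0)_m$, then by Definition~\ref{opdef} we have $RA(1)_m = (qa_k,\ldots,qa_0)_m$. Every partial-sum exponent $t_i$ increases by one, so each multiplicity polynomial is multiplied by $q$. Hence $\chi(RA) = \sum_i qa_im^i = q\chi(A)$, and induction on $t$ gives $\chi(R^tA) = q^t\chi(A)$.

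\textbf{The right factor $M^s$.} Here the operators are applied first to $(1)_m$, so I would not manipulate $AM^s$ directly. Instead I use the observation from Remark~\ref{usefulremark}(3) together with the proof of Corollary~\ref{repcor}: $M^s(1)_m = S^s(1)_m$. Therefore $AM^s(1)_m = AS^s(1)_m$. By Lemma~\ref{commlemma}(2), and by (1) for any $R$'s appearing in $A$, the $S$ operators commute leftward past every operator in $A$. This gives $AM^s(1)_m = S^sA(1)_m$.

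Since $S$ simply appends a zero multiplicity, $S^sA(1)_m = (a_k,\ldots,a_0,0,\ldots,0)_m$ with $s$ zeros. The partial sums of the original multiplicities, and hence their exponents $t_i$, are unchanged. Only the positions shift up by $s$, so $\chi(AM^s) = \sum_i a_im^{i+s} = m^s\chi(A)$.

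\textbf{Combining the two.} Apply the $R$-step to the sequence $AM^s$ to obtain $\chi(R^tAM^s) = q^t\chi(AM^s) = q^tm^s\chi(A)$.

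\textbf{Expected obstacle.} The one subtle point is making sure these are identities in $\mathbb{Z}[m,q]$, not merely equalities of numbers at particular values of $m$ and $q$. Commuting $S$ past $R$ and $M$ (Lemma~\ref{commlemma}) is stated for concrete partitions. However, the resulting multiplicity vector, recorded as exponent data $t_i$, depends only on the operator sequence and not on the values of $m$ and $q$. So I would phrase the commuting argument at the level of the exponent sequences $(t_k,\ldots,t_0)$ attached to each generating sequence. Then $\chi$ of each side is literally the same polynomial expression, and the argument holds for indeterminates $m$ and $q$.
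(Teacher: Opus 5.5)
Your proof is correct and follows the paper's argument. The paper likewise rewrites $R^tAM^s(1)_m = R^tAS^s(1)_m = S^sR^tA(1)_m$ using Remark~\ref{usefulremark}(3) and Lemma~\ref{commlemma}, then notes that each $S$ scales $\chi$ by $m$ and each $R$ scales it by $q$. Your additional point, that the identity holds in $\zz[m,q]$ because the exponent data depends only on the operator sequence, is a reasonable clarification that the paper leaves implicit.
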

\begin{proof}
$R^tAM^s(1)_m  = R^tAS^s(1)_m = S^sR^tA(1)_m$. The shift operators scale each term by $m$, and the raise operators scale each term by $q$.
\end{proof}

From Corollary \ref{repcor}, we know we can focus on sequences made by alternating blocks of powers of $M$ and $R$ operators as we classify the possible characteristic polynomials. Theorem \ref{polynthm} will give general closed-form formulas for all possible characteristic polynomials. The formulas may be a bit overwhelming at first and break down into odd and even cases. We start with the smallest two cases in the following lemmas and use those as base cases for an induction argument in the theorem. 

\begin{lemma}\label{oddbase}
Consider $s_1, t_1 \geq 0$ with at least one of them positive. Then $$\chi(M^{s_1}R^{t_1}) = q^{t_1} + m^{s_1} - 1 = (m^{s_1} - 1) + m^0q^0 + (q^{t_1} - 1).$$
\end{lemma}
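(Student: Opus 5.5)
I compute the partition $M^{s_1}R^{t_1}(1)_m$ explicitly and then read off $\chi$ from Definition \ref{polyndef}. First apply $R^{t_1}$ to $(1)_m$. By Definition \ref{opdef}(1), each raise multiplies every multiplicity by $q$, so $R^{t_1}(1)_m = (q^{t_1})_m$. This is a one-part partition with top multiplicity $q^{t_1}$, and when $t_1=0$ it is just $(1)_m$.

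Next apply $M$ once. By Definition \ref{opdef}(2) the top multiplicity $q^{t_1}$ is replaced by $q^{t_1}-1$ and a new top multiplicity $1$ is appended, giving $(1,q^{t_1}-1)_m$. Now the top multiplicity is $1$, so every further application of $M$ replaces the leading $1$ by $0$ and appends a new $1$. By induction on $s_1\ge 1$,
$$
M^{s_1}R^{t_1}(1)_m = (1,0,\ldots,0,q^{t_1}-1)_m,
$$
with $s_1-1$ zeros between the leading $1$ and the last entry. In particular $k=s_1$, $a_{s_1}=1$, $a_0=q^{t_1}-1$, and all other $a_i=0$. If $s_1=1$ there are no zeros, and the formula is still $(1,q^{t_1}-1)_m$, which agrees.

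Evaluating $\sum a_i m^i$ then gives $m^{s_1}+q^{t_1}-1$. The same can be checked through the exponent description in Definition \ref{polyndef}. There $t_{s_1}=0$ and $t_i=t_1$ for $i<s_1$, so the differences $q^{t_i}-q^{t_{i+1}}$ vanish except at $i=0$ and at the top.

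The edge cases need a separate check. If $s_1=0$, then $t_1>0$ and $\chi(R^{t_1})=q^{t_1}=q^{t_1}+m^0-1$. If $t_1=0$, then $s_1>0$ and the partition is $(1,0,\ldots,0)_m$, giving $m^{s_1}=m^{s_1}+q^0-1$; this is consistent with Remark \ref{usefulremark}(3). Finally, the second expression $(m^{s_1}-1)+m^0q^0+(q^{t_1}-1)$ equals $q^{t_1}+m^{s_1}-1$ by direct rearrangement, since $m^0q^0=1$.

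There is no real obstacle here. The only care needed is the bookkeeping that the first $M$ acts on $q^{t_1}$, while the later ones act on a top multiplicity equal to $1$.
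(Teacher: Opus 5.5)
Your proof is correct and follows the paper's own argument: compute $M^{s_1}R^{t_1}(1)_m=(1,0,\ldots,0,q^{t_1}-1)_m$ directly from Definition~\ref{opdef}, read off $\chi$, and handle $s_1=0$ separately. There is one small slip in your optional cross-check. The exponents of Definition~\ref{polyndef} (where $q^{t_i}=a_k+\cdots+a_i$) are $0$ for every $1\le i\le s_1$, and they equal the raise count $t_1$ only at $i=0$, not for all $i<s_1$; that is exactly why the middle differences vanish, and your main computation does not depend on it.
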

\begin{proof}
By Definition \ref{opdef}, $M^{s_1}R^{t_1}(1)_m = M^{s_1}(q^{t_1})_m = (1,0,\ldots,0,q^{t_1}-1)_m$, with $s_1 -1$ zeros in the list if $s_1>0$. Thus,
$$
\chi(M^{s_1}R^{t_1}) = m^{s_1} + (q^{t_1}-1)m^0 = (m^{s_1} - 1) + m^0q^0 + (q^{t_1}-1).
$$
If $s_1 = 0$, then  $M^{s_1}R^{t_1}(1)_m = (q^{t_1})_m$ and so $$\chi(M^{s_1}R^{t_1}) = q^{t_1}m^0 = (m^{s_1} - 1) + m^0q^0 + (q^{t_1}-1).$$\end{proof}

\begin{lemma}\label{evenbase}
Let $s_1,s_2, t_1,t_2\geq 0$ be integers with $s_1,t_2>0$. Then 
$$
\chi(M^{s_2}R^{t_2}M^{s_1}R^{t_1})  = m^{s_1}(m^{s_2}-1) + m^{s_1}q^{t_2} + q^{t_2}(q^{t_1}-1).
$$
\end{lemma}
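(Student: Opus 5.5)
We compute the partition $M^{s_2}R^{t_2}M^{s_1}R^{t_1}(1)_m$ explicitly, applying the operators of Definition~\ref{opdef} from right to left, and then read off $\chi$ from Definition~\ref{polyndef}. This mirrors the proof of Lemma~\ref{oddbase}, which we reuse for the inner block.

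\textbf{Step 1: the inner block.} From the proof of Lemma~\ref{oddbase},
$$M^{s_1}R^{t_1}(1)_m = (1,0,\ldots,0,q^{t_1}-1)_m,$$
with top multiplicity $1$ at position $s_1$ (here $s_1>0$) and $s_1-1$ zeros in between.

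\textbf{Step 2: apply $R^{t_2}$.} Raising multiplies every multiplicity by $q^{t_2}$. This gives top multiplicity $q^{t_2}$ at position $s_1$, zeros at positions $1,\ldots,s_1-1$, and $q^{t_2}(q^{t_1}-1)$ at position $0$.

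\textbf{Step 3: apply $M^{s_2}$.} The only care needed is in tracking how successive migrations move the top entry. The first $M$ replaces the top multiplicity $q^{t_2}$ at position $s_1$ by $q^{t_2}-1$ and puts a $1$ at position $s_1+1$. Each further $M$ sees a top multiplicity of $1$, so it changes that $1$ into $0$ and places a new $1$ one position higher. Thus for $s_2>0$ the result is
\[
(1,0,\ldots,0,\,q^{t_2}-1,\,0,\ldots,0,\,q^{t_2}(q^{t_1}-1))_m ,
\]
with the $1$ at position $s_1+s_2$, the entry $q^{t_2}-1$ at position $s_1$, and zeros elsewhere.

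\textbf{Step 4: compute $\chi$.} For $s_2>0$,
\[
\chi = m^{s_1+s_2} + (q^{t_2}-1)m^{s_1} + q^{t_2}(q^{t_1}-1) = m^{s_1}(m^{s_2}-1) + m^{s_1}q^{t_2} + q^{t_2}(q^{t_1}-1),
\]
which is the claimed formula. When $s_2=0$, the partition from Step 2 gives
\[
\chi = q^{t_2}m^{s_1} + q^{t_2}(q^{t_1}-1),
\]
and this also agrees with the formula, since then $m^{s_1}(m^{s_2}-1)=0$.

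Finally, the case $t_1=0$ needs no separate treatment. It is allowed by the hypotheses, the computation above never used $t_1>0$, and the last term simply vanishes. The argument is a direct bookkeeping computation, and the only point needing attention is the one in Step 3: keeping the positions straight when $q^{t_2}-1$ may equal $0$ or when $s_2=0$. Those degenerate cases are covered by the same expression.
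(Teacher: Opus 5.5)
Your proposal is correct and follows essentially the same route as the paper: obtain $M^{s_1}R^{t_1}(1)_m=(1,0,\ldots,0,q^{t_1}-1)_m$ from Lemma~\ref{oddbase}, raise by $q^{t_2}$, apply $M^{s_2}$, and read off $\chi$, with $s_2=0$ handled separately. One harmless slip: since $t_2>0$ and $q\geq 2$, the entry $q^{t_2}-1$ is at least $1$, so the degenerate case ``$q^{t_2}-1=0$'' you mention cannot arise.
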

\begin{proof}
Working from the proof of the last lemma, 
$$
\begin{array}{rcl}
M^{s_2}R^{t_2}M^{s_1}R^{t_1}(1)_m &=& M^{s_2}R^{t_2}(1,0,\ldots,0,q^{t_1}-1)_m \\
&=& M^{s_2}(q^{t_2},0,\ldots,0,q^{t_2}(q^{t_1}-1))_m,
\end{array}
$$
which is  $(1,0,\ldots,0,q^{t_2}-1,0,\ldots,0,q^{t_2}(q^{t_1}-1))_m$ with $s_2-1$ zeros and $s_1-1$ zeros in the list if $s_2>0$ or $(q^{t_2},0\ldots,0,q^{t_2}(q^{t_1}-1))_m$ if $s_2 = 0$. Hence for $s_2 >0$ 
$$
\begin{array}{rcl}
\chi(M^{s_2}R^{t_2}M^{s_1}R^{t_1}) &=& m^{s_1+s_2} + (q^{t_2}-1)m^{s_1} + q^{t_2}(q^{t_1}-1) \\
&=& m^{s_1}(m^{s_2}-1) + m^{s_1}q^{t_2} + q^{t_2}(q^{t_1}-1).
\end{array}
$$
If $s_2=0$, then we have $
\chi(M^{s_2}R^{t_2}M^{s_1}R^{t_1}) = q^{t_2}m^{s_1} + q^{t_2}(q^{t_1}-1) = m^{s_1}(m^{s_2}-1) + m^{s_1}q^{t_2} + q^{t_2}(q^{t_1}-1).
$
\end{proof}

We break out this theorem into even and odd cases. The two formulas could be merged into a single formula based on floors and ceilings, but we elected to list them separately in the hopes of increased clarity.

\begin{theorem} \label{polynthm}
Let $m,q\geq 2$. Let $\lambda$ be a conjugate $(m,q)$-ary partition generated by $M^{s_k}R^{t_k}\cdots M^{s_1}R^{t_1}(1)_m$,  where $s_{k}, t_1 \geq 0$ but all other exponents are strictly positive. Suppose that $k = 2b+1$ is odd. Then  $\chi(M^{s_{2b+1}}R^{t_{2b+1}}\cdots M^{s_1}R^{t_1})$
$$ 
\begin{array}{c}
\ds = \sum_{i=1}^{b} \left( m^{s_1+\cdots+s_{2b+1-i}}q^{t_{2b+3-i}+\cdots+ t_{2b+1}}(m^{s_{2b+2-i}} -1)  \right) \\[5mm]
\ds + m^{s_1+\cdots+s_{b+1}}q^{t_{b+2}+\cdots+t_{2b+1}} - m^{s_1+\cdots+s_{b}}q^{t_{b+2}+\cdots+t_{2b+1}} + m^{s_1+\cdots+s_{b}}q^{t_{b+1}+\cdots+t_{2b+1}} \\[2mm]
\ds  + \sum_{i=1}^{b} \left(  m^{s_1+\cdots+s_{i-1}}q^{t_{i+1}+\cdots+ t_{2b+1}}(q^{{t_i}} -1)  \right)  
\end{array} \leqno \deqno \label{charpolyodd}
$$
Suppose that $k = 2b$ is even. Then $\chi(M^{s_{2b}}R^{t_{2b}}\cdots M^{s_1}R^{t_1})$
$$ 
\begin{array}{c}  
\ds = \sum_{i=1}^{ b } \left( m^{s_1+\cdots+s_{2b-i}}q^{t_{2b+2-i}+\cdots+ t_{2b}}(m^{s_{2b+1-i}} -1)  \right) \\[4mm]
\ds + m^{s_1+\cdots+s_{b }}q^{t_{ b + 1}+\cdots+t_{2b}} 
\ds + \sum_{i=1}^{ b } \left(  m^{s_1+\cdots+s_{i-1}}q^{t_{i+1}+\cdots+ t_{2b}}(q^{{t_i}} -1)  \right)  
\end{array}  \leqno \deqno \label{charpolyeven}
$$
In each case, if $\lambda$ is simple so that all exponents are strictly positive, then there are $k+1$ distinct monomials with coefficient $+1$ and $k$ distinct monomials with coefficient $-1$. 
\end{theorem}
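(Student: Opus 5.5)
The plan is to first find a closed form for the multiplicities of $\lambda$ that does not depend on parity, and then regroup the resulting sum into (\ref{charpolyodd}) or (\ref{charpolyeven}). Throughout, write $S_j = s_1+\cdots+s_j$ (with $S_0=0$) and $T_j = t_{j+1}+\cdots+t_k$ (with $T_k = 0$).

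\textbf{Step 1: the multiplicities.} I claim that the nonzero multiplicities of $\lambda = M^{s_k}R^{t_k}\cdots M^{s_1}R^{t_1}(1)_m$ sit only at the positions $S_0<S_1<\cdots<S_k$, with multiplicity $q^{T_k}=1$ at position $S_k$ and multiplicity $q^{T_j}-q^{T_{j+1}}$ at position $S_j$ for $0\le j<k$. This is the pattern already visible in Lemmas \ref{oddbase} and \ref{evenbase}. I would prove it by induction on $k$, with those lemmas as base cases, using only Definition \ref{opdef}:
\begin{itemize}
\item $R$ multiplies every multiplicity by $q$, so it raises each $T_j$ by one.
\item $M^{s}$ turns a top multiplicity $q^{c}$ into $q^{c}-1$ and places a new $1$ exactly $s$ positions higher.
\end{itemize}
Adding one more block $M^{s_{k+1}}R^{t_{k+1}}$ therefore increases every $T_j$ by $t_{k+1}$ and appends the new top position $S_{k+1}$ with multiplicity $1$. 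The degenerate cases $s_k=0$ or $t_1=0$ need a brief separate check, as in the base lemmas. Consequently
$$\chi(A)=m^{S_k}+\sum_{j=0}^{k-1}\bigl(q^{T_j}-q^{T_{j+1}}\bigr)m^{S_j}=\sum_{j=0}^{k} m^{S_j}q^{T_j}-\sum_{j=0}^{k-1}m^{S_j}q^{T_{j+1}}.$$

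\textbf{Step 2: regrouping.} Each negative monomial $m^{S_j}q^{T_{j+1}}$ can be paired with one of two neighbouring positive monomials:
\begin{itemize}
\item with $m^{S_j}q^{T_j}$, giving $m^{S_j}q^{T_{j+1}}(q^{t_{j+1}}-1)$;
\item with $m^{S_{j+1}}q^{T_{j+1}}$, giving $m^{S_j}q^{T_{j+1}}(m^{s_{j+1}}-1)$.
\end{itemize}
For $j=0,\dots,b-1$ I use the first pairing; setting $i=j+1$, these are exactly the terms of the last sum in both (\ref{charpolyodd}) and (\ref{charpolyeven}). For the top $b$ negative monomials I use the second pairing; reindexing by $j=k-i$, these give the first sums.

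What remains depends on parity:
\begin{itemize}
\item If $k=2b$, the pairings use every monomial except the middle positive one $m^{S_b}q^{T_b}$, which is the isolated middle term of (\ref{charpolyeven}).
\item If $k=2b+1$, two positive monomials $m^{S_b}q^{T_b}$ and $m^{S_{b+1}}q^{T_{b+1}}$ remain, together with one negative monomial $m^{S_b}q^{T_{b+1}}$. These are the three middle terms of (\ref{charpolyodd}).
\end{itemize}
The only real work is this index bookkeeping: checking that, for example, $T_{2b+1-i}$ with $k=2b+1$ matches $t_{2b+3-i}+\cdots+t_{2b+1}$ as written in the statement. I expect this matching to be the main (tedious) obstacle. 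I would verify it against the $b=0$ and $b=1$ cases before trusting it in general.

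\textbf{Step 3: counting monomials.} The expanded form in Step 1 shows $k+1$ monomials with coefficient $+1$, namely $m^{S_j}q^{T_j}$ for $0\le j\le k$, and $k$ monomials with coefficient $-1$, namely $m^{S_j}q^{T_{j+1}}$ for $0\le j\le k-1$. When $\lambda$ is simple, all $s_i,t_i>0$, so the $S_j$ are strictly increasing and the $T_j$ are strictly decreasing. Hence the exponent pairs $(S_j,T_j)$ are pairwise distinct, and so are the pairs $(S_j,T_{j+1})$. A positive monomial and a negative monomial could coincide only if $(S_j,T_j)=(S_{j'},T_{j'+1})$. Equality of the $m$-exponents forces $j=j'$, and then $T_j\ne T_{j+1}$ because $t_{j+1}>0$. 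So no cancellation occurs, and the stated count of $k+1$ monomials with coefficient $+1$ and $k$ with coefficient $-1$ holds.
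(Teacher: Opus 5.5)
Your proof is correct, but it is organized differently from the paper's.

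\textbf{The paper's route.} The paper inducts directly on the two parity-specific formulas. Assuming (\ref{charpolyeven}) for $k=2b$, it multiplies by $q^{t_{2b+1}}$, adds $m^{s_1+\cdots+s_{2b+1}}-m^{s_1+\cdots+s_{2b}}$, and re-indexes the sums to reach (\ref{charpolyodd}). It then does a separate odd-to-even step. So it needs both Lemma~\ref{oddbase} and Lemma~\ref{evenbase} as base cases.

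\textbf{Your route.} You push the same recursion into a parity-free statement about the multiplicities. This gives the signed ``staircase'' expansion $\chi=\sum_{j=0}^{k}m^{S_j}q^{T_j}-\sum_{j=0}^{k-1}m^{S_j}q^{T_{j+1}}$. You then get both displayed formulas at once by pairing each negative monomial with a neighbouring positive one. The degenerate cases $s_k=0$ and $t_1=0$ are harmless, since coinciding positions simply add in $\chi$.

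\textbf{What your approach buys.}
\begin{itemize}
\item The induction is trivial, and only the $k=1$ case is really needed as a base.
\item It shows that (\ref{charpolyodd}) and (\ref{charpolyeven}) are one particular grouping of the same $2k+1$ signed monomials. Any split point would give an equally valid formula. This explains the switch from $(q^{t_i}-1)$ to $(m^{s_j}-1)$ factors halfway through. It also explains the three-term middle when $k$ is odd: exactly then one negative monomial is left unpaired.
\item It actually proves the final claim about $k+1$ distinct positive and $k$ distinct negative monomials, which the paper's proof leaves unaddressed.
\end{itemize}
The paper's route avoids the intermediate expansion, but it pays with two index-heavy re-indexing steps.

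\textbf{A small slip.} In your bookkeeping remark, take $j=k-i$ and $k=2b+1$. The $q$-exponent of the $i$th term of the first sum is then $T_{j+1}=T_{2b+2-i}$, not $T_{2b+1-i}$. With that index it does equal $t_{2b+3-i}+\cdots+t_{2b+1}$, as required.
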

\begin{proof}
The proof is by induction  on $k$. The odd and even base cases are given in the previous lemmas by setting $b=0$ (i.e., $k=1$) and $b=1$ (i.e., $k=2$), respectively. 
The induction then breaks down into even and odd cases as well. 

Consider an odd value $k = 2b+1$ and suppose that $\chi(M^{s_{2b}}R^{t_{2b}}\cdots M^{s_1}R^{t_1})$ has the formula as given in (\ref{charpolyeven}).
Then $\chi(M^{s_{2b+1}}R^{t_{2b+1}}\cdots M^{s_1}R^{t_1})$ is given by 
$$
\begin{array}{c}
\ds q^{t_{2b+1}} \sum_{i=1}^{ b } \left( m^{s_1+\cdots+s_{2b-i}}q^{t_{2b+2-i}+\cdots+ t_{2b}}(m^{s_{2b+1-i}} -1)  \right) \\[4mm]
\ds + q^{t_{2b+1}} m^{s_1+\cdots+s_{b }}q^{t_{ b + 1}+\cdots+t_{2b}} 
\ds + q^{t_{2b+1}}\sum_{i=1}^{ b } \left(  m^{s_1+\cdots+s_{i-1}}q^{t_{i+1}+\cdots+ t_{2b}}(q^{{t_i}} -1)  \right)  \\[4mm]
\ds + m^{s_1+\cdots+s_{2b+1}} - m^{s_1+\cdots+s_{2b}} 
\end{array}
$$
Distribute $q^{t_{2b+1}}$ throughout the formula above and move the pure $m$ terms to the front. In the first summation, reset the index to run from $i=2$ to $i=b+1$ by replacing $i$ with $i-1$ throughout and peel off the new $i=b+1$ term, which is listed on the second line below. 
$$
\begin{array}{c}
\ds m^{s_1+\cdots+s_{2b}}(m^{s_{2b+1}}-1)  + \sum_{i=2}^{b} \left( m^{s_1+\cdots+s_{2b+1-i}}q^{t_{2b+3-i}+\cdots+ t_{2b+1}}(m^{s_{2b+2-i}} -1)  \right) \\[5mm]
\ds + m^{s_1+\cdots+s_{b}}q^{t_{b+2}+\cdots+ t_{2b+1}}(m^{s_{b + 1}} -1) \\[4mm]
\ds + m^{s_1+\cdots+s_{b }}q^{t_{ b + 1}+\cdots+t_{2b+1}} + \sum_{i=1}^{ b } \left(  m^{s_1+\cdots+s_{i-1}}q^{t_{i+1}+\cdots+ t_{2b+1}}(q^{{t_i}} -1)  \right),
\end{array}
$$
which gives formula (\ref{charpolyodd}) once the front terms are incorporated into the first summation as the $i=1$ case and the middle line expression is distributed into terms. 

Consider $k=2b$ and suppose, adapting (\ref{charpolyodd}), that $\chi(M^{s_{2b-1}}R^{t_{2b-1}}\cdots M^{s_1}R^{t_1})$ 
$$ 
\begin{array}{c}
\ds = \sum_{i=1}^{b-1} \left( m^{s_1+\cdots+s_{2b-1-i}}q^{t_{2b+1-i}+\cdots+ t_{2b-1}}(m^{s_{2b-i}} -1)  \right) \\[5mm]
\ds + m^{s_1+\cdots+s_{b}}q^{t_{b+1}+\cdots+t_{2b-1}} - m^{s_1+\cdots+s_{b-1}}q^{t_{b+1}+\cdots+t_{2b-1}}  \\[2mm]
\ds  + m^{s_1+\cdots+s_{b-1}}q^{t_{b}+\cdots+t_{2b-1}}+ \sum_{i=1}^{b-1} \left(  m^{s_1+\cdots+s_{i-1}}q^{t_{i+1}+\cdots+ t_{2b-1}}(q^{{t_i}} -1)  \right)  
\end{array}  
$$
Then $\chi(M^{s_{2b}}R^{t_{2b}}\cdots M^{s_1}R^{t_1})$ is given by
$$
\begin{array}{c}
\ds = q^{2b}\sum_{i=1}^{b-1} \left( m^{s_1+\cdots+s_{2b-1-i}}q^{t_{2b+1-i}+\cdots+ t_{2b-1}}(m^{s_{2b-i}} -1)  \right) \\[5mm]
\ds + q^{2b}m^{s_1+\cdots+s_{b}}q^{t_{b+1}+\cdots+t_{2b-1}} - q^{2b}m^{s_1+\cdots+s_{b-1}}q^{t_{b+1}+\cdots+t_{2b-1}}  \\[2mm]
\ds  + q^{2b}m^{s_1+\cdots+s_{b-1}}q^{t_{b}+\cdots+t_{2b-1}}+ q^{2b}\sum_{i=1}^{b-1} \left(  m^{s_1+\cdots+s_{i-1}}q^{t_{i+1}+\cdots+ t_{2b-1}}(q^{{t_i}} -1)  \right) \\[5mm]
\ds + m^{s_1+\cdots+s_{2b}} - m^{s_1+\cdots+s_{2b-1}}  
\end{array} 
$$
Distribute $q^{t_{2b}}$ throughout and move the pure $m$ terms to the front. In the first summation replace $i$ with $i-1$ throughout to change the range to $i=2$ to $i=b$.  
$$
\begin{array}{c}
\ds  m^{s_1+\cdots+s_{2b}} - m^{s_1+\cdots+s_{2b-1}}  +  \sum_{i=2}^{b} \left( m^{s_1+\cdots+s_{2b-i}}q^{t_{2b+2-i}+\cdots+ t_{2b}}(m^{s_{2b+1-i}} -1)  \right) \\[5mm]
\ds + m^{s_1+\cdots+s_{b}}q^{t_{b+1}+\cdots+t_{2b}} - m^{s_1+\cdots+s_{b-1}}q^{t_{b+1}+\cdots+t_{2b}}  \\[2mm]
\ds  + m^{s_1+\cdots+s_{b-1}}q^{t_{b}+\cdots+t_{2b}}+ \sum_{i=1}^{b-1} \left(  m^{s_1+\cdots+s_{i-1}}q^{t_{i+1}+\cdots+ t_{2b}}(q^{{t_i}} -1)  \right),
\end{array} 
$$
which gives formula (\ref{charpolyeven}) once the front terms are incorporated into the first summation as the $i=1$ case, and the second monomial in the middle and the monomial in the third line enter the second summation as the $i=b$ case. 
\end{proof}

We illustrate the previous theorem by showing some examples where we can immediately write down the characteristic polynomial for a generating sequence based on the exponents of the $M$ and $R$ operators. 

\begin{example} Consider $M^{s_3}R^{t_3}M^{s_2}R^{t_2}M^{s_1}R^{t_1}$. Then its characteristic polynomial is given by the odd case (\ref{charpolyodd}) with $b=1$. 
$$
m^{s_1+s_2}(m^{s_3}-1) + m^{s_1+s_2}q^{t_3} - m^{s_1}q^{t_3} + m^{s_1}q^{t_2+t_3} + q^{t_2+t_3}(q^{t_1}-1)
$$
\end{example}

\begin{example} Consider $M^{s_4}R^{t_4}M^{s_3}R^{t_3}M^{s_2}R^{t_2}M^{s_1}R^{t_1}$. Then its characteristic polynomial is given by the even case (\ref{charpolyeven}) with $b=2$. 
$$
\begin{array}{c}
m^{s_1+s_2+s_3}(m^{s_4}-1) + m^{s_1+s_2}q^{t_4}(m^{s_3}-1) \\[2mm]
  + m^{s_1+s_2}q^{t_3+t_4} \\[2mm]
  + m^{s_1}q^{t_3+t_4}(q^{t_2}-1) + q^{t_2+t_3+t_4}(q^{t_1}-1)
\end{array}
$$
\end{example}

Looking at these examples along with the base case lemmas before the theorem, formulas (\ref{charpolyodd}) and (\ref{charpolyeven}) work a little like the binomial theorem for positive integers where powers on $m$ are decreasing while powers on $q$ are increasing. Additionally, the parenthetical parts of the formulas flip from using a power of $m$ to a power of $q$ halfway through while there are monomials in $m$ and $q$ in the middle.

Motivated by these example cases and the formulas of Theorem \ref{polynthm}, we can visualize all possible conjugate $(m,q)$-ary partitions based on a central region that is either overlapping rectangles of dimensions $m^{s_1+\cdots+s_{b+1}}\times q^{t_{b + 2}+\cdots+t_{2b+1}}$ and $m^{s_1+\cdots+s_{b}}\times q^{t_{b + 1}+\cdots+t_{2b+1}}$ with overlap of dimensions $m^{s_1+\cdots+s_{b}}\times q^{t_{b + 2}+\cdots+t_{2b+1}}$ (in the odd case) or a single rectangle with dimensions $m^{s_1+\cdots+s_{b }}\times q^{t_{ b + 1}+\cdots+t_{2b}}$ (in the even case). Notice how these regions have areas expressed by the middle terms of (\ref{charpolyodd}) and (\ref{charpolyeven}). From those central regions are rectangles branching horizontally to the right and vertically down with dimensions of the forms $m^{\alpha}(m^\beta -1)\times q^\gamma$ and $m^\alpha \times q^\beta(q^\gamma-1)$, respectively. These rectangles have areas given by the terms in the summation parts of the formulas. They are represented visually below (although not to scale) with the $m$-axis going horizontally and the $q$-axis vertically. The central regions are in violet, the first summation terms are in blue, and the second summation terms are in red. For the odd case central region, the overlapped part of the central rectangles is boxed off with dashed lines so that its area matches the negative monomial in the center of (\ref{charpolyodd}), which is subtracted to remove the double count caused by the overlap.
\begin{figure}[h]
\centering
\includegraphics[scale=0.65]{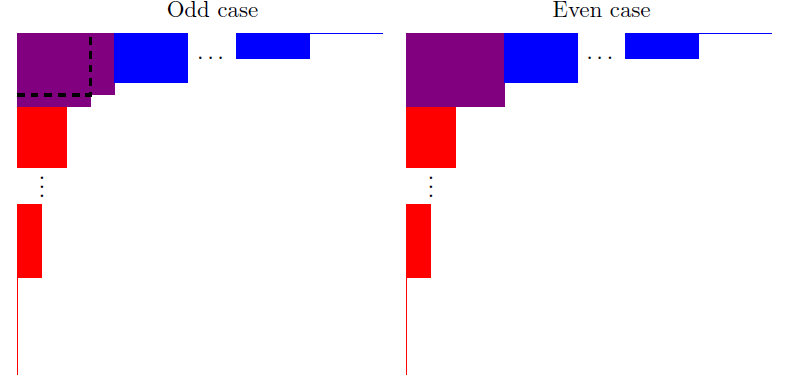}
\caption{Schematic illustration of all conjugate $(m,q)$-ary partition shapes}
\end{figure}
If you replace each rectangular region with a lattice of dots, then these diagrams are just the Ferrers diagrams for the partition rearranged to calculate the number in $n$ in terms of blocks instead of by rows. 

This next  example will prove useful later when studying CMPs in Section 5.

\begin{example} \label{MRk} The sequence $(MR)^k = MRMR \cdots MR$ has all exponents of 1 and $k$ total pairs of blocks for any positive integer $k$. When $k = 2b+1$ is odd, (\ref{charpolyodd}) gives the characteristic polynomial
$$ 
\sum_{i=1}^{b} \left( m^{2b+1-i}q^{i-1}(m -1)  \right) + m^{b+1}q^{b} - m^{b}q^{b} 
 + m^{b}q^{b+1}+ \sum_{i=1}^{b} \left(  m^{i-1}q^{2b+1-i}(q -1)  \right)  
$$
When $k=2b$ is even, (\ref{charpolyeven}) gives 
$$
 \sum_{i=1}^{ b } \left( m^{{2b-i}}q^{i-1}(m -1)  \right) 
+ m^{{b }}q^{b} 
+ \sum_{i=1}^{ b } \left(  m^{i-1}q^{2b-i}(q -1)  \right)  
$$
If we set $m=q$ (as we will in Section 5), then the two expressions simplify significantly to the common formula
$$
\chi((MR)^k) = km^{k-1}(m-1) + m^k =  km^k +(m-k)m^{k-1}.
$$
\end{example}

Using the last theorem, we can classify which integers $n$ have conjugate $(m,q)$-ary partitions for given values of $m$ and $q$.  

\begin{corollary} \label{nclassify}
Let $n\geq 2$ be an integer, and let $m,q \geq 2$. Then $n$ has a conjugate $(m,q)$-ary partition if and only if $n$ equals $\chi(M^{s_k}R^{t_k}\cdots M^{s_1}R^{t_1})$ evaluated at $m$ and $q$ for some integers $s_1, t_1, \ldots, s_k, t_k$. 
\end{corollary}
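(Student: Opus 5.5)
The plan is to prove the two directions separately, using Theorem \ref{repthm}, Corollary \ref{repcor}, and the definition of the characteristic polynomial.

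For the forward direction, suppose $\lambda = (a_k,\ldots,a_0)_m$ is a conjugate $(m,q)$-ary partition of $n$. By Corollary \ref{repcor}, $\lambda$ can be written uniquely as $M^{s_k}R^{t_k}\cdots M^{s_1}R^{t_1}(1)_m$, with $s_k,t_1\ge 0$ and all other exponents positive. Definition \ref{polyndef} gives $\chi(A)=\sum a_i m^i$ as a polynomial in the indeterminates $m,q$. The multiplicities $a_i$ are given by differences of powers of $q$ determined only by the sequence $A$. Hence substituting the actual integers $m,q$ recovers exactly $\sum a_i m^i$, which equals $n$ because $\lambda$ partitions $n$.

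The one subtlety is that the polynomial identity must be compatible with evaluation. This holds because each operator $R$ and $M$ acts on the multiplicity vector by the same formal rule whether $m,q$ are indeterminates or integers. So the multiplicity vector of $A(1)_m$ is the specialization of the formal one, and evaluation commutes with the finite sum. The explicit form in Theorem \ref{polynthm} is not actually needed here. It only serves to make the set of attainable values concrete.

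For the converse, suppose $n=\chi(M^{s_k}R^{t_k}\cdots M^{s_1}R^{t_1})$ evaluated at $(m,q)$ for some nonnegative exponents. Set $\lambda = M^{s_k}R^{t_k}\cdots M^{s_1}R^{t_1}(1)_m$. Definition \ref{opdef} shows that $R$ and $M$ send conjugate $(m,q)$-ary partitions to conjugate $(m,q)$-ary partitions, and $(1)_m$ is such a partition. So by induction on the length of the sequence, $\lambda$ is a conjugate $(m,q)$-ary partition. By the same evaluation remark, it is a partition of $\chi(A)(m,q)=n$.

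If some interior exponents are allowed to be zero, no harm results. Adjacent blocks then merge, $M^aM^b=M^{a+b}$ and likewise for $R$, and this leaves the partition unchanged. The hypothesis $n\ge 2$ only excludes the empty sequence, whose value is the trivial partition of $1$. I expect no real obstacle. The only point needing care is stating clearly that formal evaluation commutes with the operator action, and I would justify this by a one-line induction on the length of $A$.
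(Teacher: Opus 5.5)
Your proposal is correct and follows the paper's own (unwritten) reasoning: the paper states this corollary without proof, treating it as immediate from Theorem \ref{repthm} (via Corollary \ref{repcor}) together with Definition \ref{polyndef}, since $A(1)_m$ is a conjugate $(m,q)$-ary partition of $\chi(A)$ evaluated at $(m,q)$. You are also right that the explicit formulas of Theorem \ref{polynthm} are not logically needed, and that the one point deserving a line of justification is that the partial-sum exponents depend only on $A$, so evaluating $\chi(A)$ at specific $m,q$ commutes with the operator action.
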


In later sections, we will make some attempts to determine more concretely which $n$ can and cannot have a conjugate $(m,q)$-ary partition for specific values of $m$ and $q$. In very limited cases we can even count how many such partitions are possible. In general, both of these problems appear to be very difficult to solve.

We can, however, use the theorem to place general bounds on $n$ based on the number of operators used to generate $\lambda$.

\begin{corollary}
Let $m,q\geq 2$. Suppose that $\lambda$ is a conjugate $(m,q)$-ary partition generated by a sequence with a total of $t$ operations of $R$ and $s$ operations of $M$, in some order. Then
\begin{nlist}
\item the minimum $n$ that $\lambda$ can partition is $m^s+q^t-1$ with $\lambda = M^sR^t(1)_m = (1,0\ldots,0,q^t-1)_m$ if $s>0$ or $\lambda = (q^t)_m$ if $s=0$
\item the maximum $n$ that $\lambda$ can partition is $m^sq^t$ with \\ $\lambda = R^tM^s(1)_m = (q^t, 0,\ldots, 0)_m$.
\end{nlist}
\end{corollary}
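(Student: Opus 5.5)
We need to prove that for fixed totals $s$ (operations of $M$) and $t$ (operations of $R$), the value $n=\chi(A)$ lies between $m^s+q^t-1$ and $m^sq^t$, with the stated extremal sequences. Write $\lambda=(a_k,\ldots,a_0)_m$. By Corollary \ref{opcount} we have $k=s$, $a_s\neq 0$, and $\sum_{i=0}^s a_i=q^t$. The plan is to argue directly from these constraints on the multiplicities rather than through the full formulas of Theorem \ref{polynthm}.

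\textbf{Maximum.} Since $m^i\le m^s$ for every $i\le s$, we get
$$n=\sum_{i=0}^s a_i m^i\le m^s\sum_{i=0}^s a_i=m^sq^t.$$
Equality holds exactly when all the weight sits on $m^s$, i.e.\ $a_s=q^t$ and all other $a_i=0$. That partition is $(q^t,0,\ldots,0)_m=R^tS^s(1)_m=R^tM^s(1)_m$, using Remark \ref{usefulremark}. Its value $m^sq^t$ also agrees with the lemma $\chi(R^tAM^s)=m^sq^t\chi(A)$ applied with $A$ empty.

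\textbf{Minimum.} If $s=0$, then $\lambda=(q^t)_m$ is forced and $n=q^t$, which is the claimed formula. Now suppose $s>0$. Then $a_s\ge1$ and $\sum_{i<s}a_i=q^t-a_s$. We estimate
$$n\ge a_sm^s+\sum_{i<s}a_i = a_s m^s+q^t-a_s = a_s(m^s-1)+q^t\ge m^s-1+q^t.$$
This uses $m^i\ge1$ for each $i$ and $a_s\ge1$.

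Equality forces two things: $a_s=1$, and every $a_i$ with $0<i<s$ is zero, since $m^i>1$ there. So $\lambda=(1,0,\ldots,0,q^t-1)_m$, which by Lemma \ref{oddbase} is $M^sR^t(1)_m$ with value $m^s+q^t-1$. This partition is admissible because its partial sums $1,\ldots,1,q^t$ are powers of $q$.

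No step is a real obstacle. The only point needing care is justifying that the number of leading multiplicities equals $s$ and the total equals $q^t$, which is exactly Corollary \ref{opcount}. Uniqueness of the extremal sequences then follows from Theorem \ref{repthm}.
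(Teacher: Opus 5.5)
Your proof is correct, but it takes a different route from the paper's.

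\textbf{The paper's route.} It argues dynamically along the generating sequence. Each $R$ multiplies the current value by $q$. The $M$ applied after $i$ copies of $M$ have already occurred adds $m^{i+1}-m^i>0$. So the final value is $q^t$ plus these increments, each scaled by $q$ raised to the number of $R$'s applied after it. The minimum comes from applying all the $R$'s first (giving $M^sR^t$), and the maximum from applying all the $M$'s first (giving $R^tM^s$). These values are then matched with the characteristic polynomials of Lemmas \ref{oddbase} and \ref{evenbase}.

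\textbf{Your route.} You use only the static information from Corollary \ref{opcount}: $\lambda$ has exactly $q^t$ parts and largest part $m^s$. From this you bound $\sum a_i m^i$ directly. In Ferrers-diagram terms, the diagram lies inside the rectangle with $q^t$ rows and $m^s$ columns, and it contains the hook formed by its first row and first column. Hence $m^s+q^t-1\le n\le m^sq^t$ holds for every partition with these dimensions. The only $(m,q)$-specific input is that the hook and the rectangle are themselves conjugate $(m,q)$-ary partitions.

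\textbf{Comparison.} Your version makes the equality cases fully rigorous without any discussion of operator orderings, and with them the uniqueness of the extremal partitions. The paper's wording about avoiding ``multiplying these increasingly large differences'' is more heuristic. In return, the paper's viewpoint shows that $n$ increases monotonically whenever an $M$ is applied before more of the $R$'s. That explains why the extremes are exactly $M^sR^t$ and $R^tM^s$, and it fits the operator and characteristic-polynomial framework used throughout.
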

\begin{proof}
Based on Definition \ref{opdef}, the $t$ operations of $R$ will result in $t$ multiplications by $q$. The $s$ operations of $M$ will result in adding terms of the form $m^{i+1}-m^{i}$, resulting in a maximum value of $m^s-m^{s-1}$. In order to minimize the final value of $n$, we need to avoid multiplying these increasingly large differences of powers of $m$ by powers of $q$. Thus, the minimum occurs when all multiplications by $q$ occur first followed by the additions of the $m$-based terms. In other words, the minimum is $q^t + (m-1) + (m^2-m) + \cdots + (m^s-m^{s-1}) = m^s + q^t-1$ as claimed. This formula is the characteristic polynomial uniquely associated to $M^sR^t$ by Theorem \ref{polynthm} and Lemma \ref{oddbase}. Similarly, the maximum occurs when all the additions occur first and are then scaled up by powers of $q$, i.e., $q^t(1 +(m-1) +\cdots + (m^s-m^{s-1})) = m^sq^t$, which is the characteristic polynomial of $R^tM^s$ by Theorem \ref{polynthm} and Lemma \ref{evenbase}.
\end{proof}

We can also put bounds on $n$ based only on the total number of operators used to generate a partition.

\begin{corollary}
Let $m,q\geq 2$. Suppose that $\lambda$ is a conjugate $(m,q)$-ary partition generated by a sequence of $p$ total operations of $R$ and $M$ in some order. Then 
\begin{nlist}
\item the minimum $n$ that $\lambda$ can partition is min$\{m^s+q^t-1\,|\, s+t=p\}$. That minimum occurs when $s$ is the closest integer to $\frac{p\ln q}{\ln(mq)}$ and $t = p-s$. Or, $n \geq 2m^{\frac{p\ln q}{\ln(mq)}}-1$.
\item the maximum $n$ that $\lambda$ can partition is $($max$\{m,q\})^p$
\end{nlist}
\end{corollary}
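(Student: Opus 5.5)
The plan is to reduce everything to the previous corollary, which fixes the split $(s,t)$ of operators. Any generating sequence with $p$ total operators has some number $s$ of $M$'s and $t=p-s$ of $R$'s. By the previous corollary, for that split the smallest attainable $n$ is $m^s+q^t-1$ and the largest is $m^sq^t$. Both are attained, by $M^sR^t$ and by $R^tM^s$ respectively.

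For (1), taking the minimum over $s+t=p$ immediately gives $\min\{m^s+q^t-1 : s+t=p\}$. To locate the minimizer, I will treat $f(s)=m^s+q^{p-s}$ as a function of a real variable on $[0,p]$. It is strictly convex, since $f''(s)=(\ln m)^2m^s+(\ln q)^2q^{p-s}>0$. Setting $f'(s)=0$ gives $m^s\ln m=q^{p-s}\ln q$.

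This equation does not solve to exactly $s=p\ln q/\ln(mq)$. That value is instead the solution of the balancing equation $m^s=q^{p-s}$, since taking logarithms gives $s\ln m=(p-s)\ln q$. The main obstacle is therefore to reconcile the paper's stated location with the true critical point. I would argue as follows. The critical point $s^*$ satisfies $m^{s^*}/q^{p-s^*}=\ln q/\ln m$, so it differs from the balance point $s_0=p\ln q/\ln(mq)$ by $\ln(\ln q/\ln m)/\ln(mq)$. For $m,q\ge 2$ this difference is small, and equal to zero when $m=q$. Convexity then says the integer minimizer is one of the integers adjacent to $s^*$. I will present the claim as the integer nearest the balance point, and remark that it agrees with the true minimizer up to this small shift. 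If needed, I would compare $f$ at $\lfloor s_0\rfloor$ and $\lceil s_0\rceil$ directly to check this.

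For the lower bound, I will use AM--GM on real $s$: $m^s+q^{p-s}\ge 2\sqrt{m^sq^{p-s}}$. Alternatively, I can note that at $s_0$ both terms equal $m^{s_0}$. A cleaner route is needed, though, because the product $m^sq^{p-s}$ depends on $s$. So I will argue instead that for any $s$, one of $m^s$ or $q^{p-s}$ is at least the common value $m^{s_0}=q^{p-s_0}$. This holds because $m^s$ is increasing in $s$ while $q^{p-s}$ is decreasing. That gives $n\ge m^{s_0}$ immediately.

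Reaching $2m^{s_0}-1$ requires the convex bound $f(s)\ge f(s^*)$, and I expect this step to be delicate. My plan is to bound $f(s^*)$ below via the weighted AM--GM inequality. Writing $\alpha=\ln q/\ln(mq)$, we have $\alpha\cdot s\ln m+(1-\alpha)(p-s)\ln q=\alpha p\ln m$, which is independent of $s$. Weighted AM--GM then gives
$$m^s+q^{p-s}=\alpha\frac{m^s}{\alpha}+(1-\alpha)\frac{q^{p-s}}{1-\alpha}\ge \alpha^{-\alpha}(1-\alpha)^{-(1-\alpha)}m^{\alpha p}.$$
The constant in front is at most $2$, with equality exactly when $\alpha=1/2$, that is, when $m=q$. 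So this gives the stated bound only in the symmetric case, and I would check whether the stated bound needs a correction when $m\neq q$.

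For (2), I will maximize $m^sq^{p-s}$ over $0\le s\le p$. Since the product is monotone in $s$, the maximum sits at an endpoint: $s=p$ or $s=0$, giving $\max\{m,q\}^p$. This is attained by $M^p$ or $R^p$.
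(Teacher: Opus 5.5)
Your reduction to the fixed-split corollary and your argument for (2) are the same as the paper's. For (1), the paper asserts that $m^s+q^t-1$ is minimized when $m^s=q^t$ and then rounds $s_0=p\ln q/\ln(mq)$. You correctly saw that $s_0$ is not the critical point of $f(s)=m^s+q^{p-s}$, and that weighted AM--GM gives the constant $2$ only when $m=q$.

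The gap is in what you do next. You plan to assert the nearest-integer claim anyway (``agrees with the true minimizer up to this small shift'') and to leave the bound $n\ge 2m^{s_0}-1$ pending. Neither step can be completed, because both statements are false for $m\neq q$. Take $(m,q)=(2,3)$, so $s_0\approx 0.613p$ and $s^*-s_0=\ln(\ln 3/\ln 2)/\ln 6\approx 0.26$.
\begin{itemize}
\item For $p=4$, $s_0\approx 2.45$. The nearest integer $s=2$ gives $2^2+3^2-1=12$, while $M^3R(1)_2=(1,0,0,2)_2$ partitions $10$. The small shift is enough to move the integer minimizer.
\item For $p=1$, $M(1)_2=(1,0)_2$ partitions $n=2$, with conjugate $(2)_3$. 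This violates $n\ge 2\cdot 2^{\ln 3/\ln 6}-1\approx 2.06$.
\item For $p=3$, $M^2R(1)_2=(1,0,2)_2$ partitions $6<2\cdot 2^{3\ln 3/\ln 6}-1\approx 6.16$.
\end{itemize}
The $n=2$, $p=1$ example also breaks the upper bound on $p$ in the next corollary, which is derived from this bound.

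So a correct write-up should keep the first sentence of (1) and replace its last two sentences with what your own tools actually prove.
\begin{itemize}
\item Minimizing split: $f(s+1)-f(s)=m^s(m-1)-q^{p-s-1}(q-1)$ is increasing in $s$. Hence the minimizer is the least $s\in\{0,\dots,p-1\}$ with $m^s(m-1)\ge q^{p-s-1}(q-1)$, or $s=p$ if there is none. Equivalently, as you observed, it is $\lfloor s^*\rfloor$ or $\lceil s^*\rceil$, where $m^{s^*}\ln m=q^{p-s^*}\ln q$.
\item Lower bound: your weighted AM--GM computation gives the valid, sharp real-variable bound $n\ge \alpha^{-\alpha}(1-\alpha)^{-(1-\alpha)}m^{\alpha p}-1$ with $\alpha=\ln q/\ln(mq)$. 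Your monotonicity argument gives the cruder $n\ge m^{\alpha p}$.
\item Symmetric case: when $m=q$ one has $\alpha=1/2$ and $s^*=s_0=p/2$, and the statement holds exactly as printed.
\end{itemize}
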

\begin{proof}
For any pair $(s,t)$ such that $s+t=p$, we have the minimum and maximum formulas from the last corollary. The formula $m^s+q^t-1$ is minimized when $m^s=q^t$. Setting $t=p-s$ and then solving yields $s = \frac{p\ln q}{\ln(mq)}$. As $s$ must be an integer, we may need to round to the closest integer. We achieve the maximum value from $m^sq^t$ when either $p=s$ or $p=t$ depending on whether $m$ or $q$ is the larger value.
\end{proof}

Finally, we can reverse the direction of our thinking and give bounds on the number of operators that can generate a conjugate $(m,q)$-ary partition for a given value of $n$.

\begin{corollary}
Let $n$ be an integer. Suppose that $\lambda$ is a conjugate $(m,q)$-ary partition of $n$ such that $\lambda$ is generated by a sequence of $p$ total $R$ and $M$ operators in some order. If $max$ denotes the maximum of $m$ and $q$, then 
$$
\left\lceil \frac{\ln(n)}{\ln(max)} \right\rceil \leq p \leq \left\lfloor \frac{\ln\left( \frac{n+1}{2} \right)\ln(mq)}{\ln(m)\ln(q)} \right\rfloor
$$
Equality on the left will occur when $n=max^p$ and $\lambda = M^p$ (if $max = m$) or $R^p$ (if $max = q$). 
\end{corollary}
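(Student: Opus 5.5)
The plan is to invert the two preceding corollaries. Fix a conjugate $(m,q)$-ary partition $\lambda$ of $n$ generated by $p$ total operators, say $s$ copies of $M$ and $t$ copies of $R$ with $s+t=p$ (by Corollary~\ref{opcount} these counts are determined by $\lambda$). Both corollaries bound $n$ in terms of $p$ over all generating sequences of length $p$. Solving each inequality for $p$ gives the two-sided bound.

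For the left inequality, use part (2) of the last corollary: $n \le (\max)^p$. Taking logarithms gives $p \ge \ln(n)/\ln(\max)$, and since $p$ is an integer we may take the ceiling. For the equality claim, $n=(\max)^p$ forces $\lambda$ to attain the maximum $m^sq^t$. This requires $m^sq^t=(\max)^p$, which happens only when all operators are of the larger type. By Remark~\ref{usefulremark} and Definition~\ref{opdef}, $M^p(1)_m=(1,0,\ldots,0)_m$ partitions $m^p$, and $R^p(1)_m=(q^p)_m$ partitions $q^p$. If $m=q$, both are allowed; the statement's ``or'' covers this.

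For the right inequality, use part (1): $n \ge \min\{m^s+q^t-1 \mid s+t=p\}$, with the continuous lower bound $n \ge 2m^{p\ln q/\ln(mq)}-1$. I would justify this bound directly rather than through the rounding remark. By AM--GM,
\[
m^s+q^t \ge 2\sqrt{m^sq^t}.
\]
Minimizing over real $s$ with $t=p-s$, the minimum is at $m^s=q^t$, i.e.\ $s\ln m = (p-s)\ln q$, so $s=p\ln q/\ln(mq)$. The common value there is $m^{s}=\exp\!\big(p\ln m\ln q/\ln(mq)\big)$. Since the integer minimum is at least the real minimum, we get $(n+1)/2 \ge \exp\!\big(p\ln m\ln q/\ln(mq)\big)$. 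Taking logs gives $p \le \ln\!\big(\tfrac{n+1}{2}\big)\ln(mq)/(\ln m\ln q)$, and integrality gives the floor.

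The main obstacle is making the continuous minimization rigorous, namely that $f(s)=m^s+q^{p-s}$ is minimized over the reals where $m^s=q^{p-s}$. I would prove this by convexity: $f''>0$, so the unique critical point is the global minimum, and the critical point is where $m^s\ln m = q^{p-s}\ln q$. This differs slightly from $m^s=q^{p-s}$ when $m\neq q$. So I would rely on AM--GM instead, which avoids locating the exact minimizer: for every admissible $(s,t)$,
\[
m^s+q^t \ge 2\sqrt{m^sq^t} = 2\exp\!\Big(\tfrac12\big(s\ln m+(p-s)\ln q\big)\Big).
\]
If that exponent dips below the needed one, I would fall back on $m^s+q^t \ge 2\min(m^s,q^t)$ and note that $\max_s\min(m^s,q^{p-s})$ is attained at the crossing point, giving exactly $m^{p\ln q/\ln(mq)}$. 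This bound is still only a lower bound on the minimum, hence a valid lower bound for $n$.
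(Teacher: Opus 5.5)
Your left inequality is correct and follows the paper: $n\le m^sq^t\le\max(m,q)^p$, then take logarithms and use that $p$ is an integer. The right inequality is where the proposal fails, and it cannot be repaired, because that inequality is false in general.

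\textbf{Where your argument breaks.} Write $s^*=p\ln q/\ln(mq)$.
\begin{itemize}
\item Your fallback runs in the wrong direction. The identity $\max_s\min(m^s,q^{p-s})=m^{s^*}$ says $\min(m^s,q^t)\le m^{s^*}$ for every split. That is an upper bound, so $m^s+q^t\ge 2\min(m^s,q^t)$ gives nothing of the form $\ge 2m^{s^*}$.
\item Plain AM--GM also fails. When $m\neq q$, the product satisfies $m^sq^{p-s}<m^{2s^*}$ on one side of $s^*$.
\item Your own convexity computation already shows the problem. The minimizer of $m^s+q^{p-s}$ differs from $s^*$ when $m\ne q$, so the real minimum is strictly below $2m^{s^*}$.
\end{itemize}
Integer splits do land in that deficit. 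Take $(m,q)=(4,2)$ and $\lambda=R^2(1)_4=(4)_4$, which is four $1$'s with conjugate the single part $4=2^2$. Here $n=4$ and $p=2$. But $2\cdot 4^{2/3}-1=2^{7/3}-1>4$, since $2^7=128>125=5^3$. Equivalently, the claimed upper bound is $\lfloor\frac32\log_2\frac52\rfloor=1<2$. A non-rectangular example also fails: $MR^3(1)_4=(1,7)_4$ partitions $n=11$ with $p=4$, while the bound is $\lfloor\frac32\log_2 6\rfloor=3$, because $6^3<2^8$.

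\textbf{The paper's proof has the same hole.} It simply quotes $n\ge 2m^{p\ln q/\ln(mq)}-1$ from the preceding corollary. That corollary justifies it by saying ``$m^s+q^t-1$ is minimized when $m^s=q^t$'', which is exactly the error you noticed. The printed right-hand bound is valid when $m=q$: then $m^sq^t=m^p$ does not depend on the split, and your AM--GM step is exact.

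\textbf{What you can prove for general $m,q$.}
\begin{itemize}
\item The reversed form of your crossing-point remark holds: $\max(m^s,q^t)\ge m^{s^*}$ for every split. If $s\ge s^*$ use $m^s$; otherwise $q^t\ge q^{p-s^*}=m^{s^*}$. Hence $n=m^s+q^t-1\ge m^{s^*}$, which gives $p\le\lfloor\ln(n)\ln(mq)/(\ln m\ln q)\rfloor$.
\item The sharp form uses weighted AM--GM with weights $w_1=\ln q/\ln(mq)$ and $w_2=\ln m/\ln(mq)$. Since $m^{sw_1}q^{tw_2}=m^{s^*}$ for every split, you get $n+1\ge w_1^{-w_1}w_2^{-w_2}\,m^{s^*}$.
\item The constant $w_1^{-w_1}w_2^{-w_2}$ equals $2$ only when $m=q$ and is strictly smaller otherwise. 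This is why the factor $2$ in the statement cannot hold in general.
\end{itemize}
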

\begin{proof}
From the previous corollary, $2m^{\frac{p\ln q}{\ln(mq)}}-1 \leq n \leq max^p$.  Solving the inequalities for $p$ and recognizing that $p$ must be an integer leads to the results above. 
\end{proof}

\section{Using modular arithmetic to classify possible conjugate $(m,q)$-ary partitions}

In this section, we ask questions about which values of $n$ have conjugate $(m,q)$-ary partitions and how many such partitions exist. These questions appear to be very difficult to answer in general and involve some very subtle modular arithmetic issues related to specific values of $m$ and $q$. As such, this section includes a lot of examples but fewer general results. The lack of general answers should motivate future study of these questions while the examples may hint at the difficulty in doing so.

We start with four general congruence statements that hold for all pairs $(m,q)$ that can be derived from the $M$ and $R$ operators or from the characteristic polynomial.

\begin{lemma} \label{4congr}
Let $m,q\geq 2$. Suppose that $n$ has a conjugate $(m,q)$-ary partition $\lambda$. Let $d = \gcd( m(m-1), q(q-1) )$. Then $n$ satisfies all of the following:
\begin{nlist}
\item $n \equiv mq \pmod d$ or $n = m^s + q^t -1$.  
\item $n \equiv m^s \pmod{q-1}$, where $s$ is the total number of $M$ operators used to generate $\lambda$.
\item $n \equiv q^t \pmod{m-1}$, where $t$ is total number of $R$ operators used to generate $\lambda$.
\item $n \equiv 1 \pmod{\gcd(m-1,q-1)}$.
\end{nlist}
\end{lemma}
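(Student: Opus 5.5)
The plan is to prove all four congruences by induction along the generating sequence of Theorem \ref{repthm}. We track how $n$ changes under each operator in Definition \ref{opdef}. Applying $R$ sends $n\mapsto qn$. If the current partition has top index $k$ (equivalently, by Corollary \ref{opcount}, $k$ operators $M$ have been applied so far), then applying $M$ sends $n\mapsto n+m^{k}(m-1)$. We start from $(1)_m$, a partition of $1$, and reduce these two update rules modulo the relevant moduli.

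For (2), we work modulo $q-1$, where $q\equiv 1$. Each $R$ leaves $n$ unchanged modulo $q-1$, so only the $M$ operators matter. The $j$-th $M$ (for $j=1,\dots,s$) adds $m^{j}-m^{j-1}$, whatever $R$'s are interleaved. The total therefore telescopes: $n\equiv 1+\sum_{j=1}^{s}(m^j-m^{j-1})=m^s \pmod{q-1}$. For (3), we work modulo $m-1$. Each $M$ adds $m^k(m-1)\equiv 0$, and each $R$ multiplies by $q$, so $n\equiv q^t\pmod{m-1}$. Then (4) follows from either (2) or (3): modulo $g=\gcd(m-1,q-1)$ we have $m\equiv 1$, so $n\equiv m^s\equiv 1\pmod g$.

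For (1), set $d=\gcd(m(m-1),q(q-1))$ and use two facts. First, $m^j\equiv m$ and $q^j\equiv q\pmod d$ for $j\ge 1$, since $q^j-q=q(q^{j-1}-1)$ is divisible by $q(q-1)$, and similarly for $m$. Second, $m^k(m-1)\equiv 0\pmod d$ whenever $k\ge 1$. By Corollary \ref{repcor}, write the sequence as $M^{s_k}R^{t_k}\cdots M^{s_1}R^{t_1}$.
\begin{itemize}
\item If $k=1$, the partition is $M^{s}R^{t}(1)_m$, and Lemma \ref{oddbase} gives $n=m^s+q^t-1$ directly.
\item If $k\ge 2$, follow the process step by step. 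After $R^{t_1}$ we have $n=q^{t_1}$. The first $M$ adds $m-1$, and the remaining $s_1-1$ operators $M$ add multiples of $d$, so $n\equiv q^{t_1}+m-1\pmod d$. Since $t_2>0$, the next operator is an $R$, which gives $n\equiv q^{t_1+1}+qm-q\equiv qm\pmod d$. This uses $q^{t_1+1}\equiv q$, which also holds when $t_1=0$.
\item From this point on the congruence $n\equiv mq\pmod d$ is invariant. Every later $M$ has $k\ge 1$ and so adds $0$ modulo $d$. Every later $R$ gives $q\cdot mq=q^2m\equiv qm\pmod d$.
\end{itemize}
This establishes (1).

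The main obstacle is (1). We must see that the base value $q^{t_1}+m-1$ becomes $\equiv mq$ after the first $R$ that follows an $M$, and that this residue is then stable. Both facts rest on the idempotence-type congruences $q^2\equiv q$ and $m^2\equiv m$ modulo $d$, so the care lies in splitting cleanly into the exceptional case $k=1$ and the case $k\ge 2$.
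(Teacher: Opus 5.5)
Your proof is correct, but for part (1) it takes a different route from the paper.

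\textbf{The paper's route.} The paper reads (1) off the closed-form characteristic polynomials (\ref{charpolyodd}) and (\ref{charpolyeven}) of Theorem~\ref{polynthm}. When $k\ge 2$, every summand other than the ``middle terms'' is divisible by $m(m-1)$ or $q(q-1)$. The middle monomials are powers of $m$ times powers of $q$ with positive exponents, and they reduce to $mq$ modulo $d$. For (2) and (3) the paper substitutes $q=1$ or $m=1$ into those same formulas and telescopes. It mentions induction on the operators only as an alternative, and that alternative is exactly what you do for (2)--(4).

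\textbf{Your route.} You track $n$ through the update rules $n\mapsto qn$ and $n\mapsto n+m^{k}(m-1)$. This needs only $m^2\equiv m$ and $q^2\equiv q \pmod d$, $m^k(m-1)\equiv 0 \pmod d$ for $k\ge 1$, and the positivity of $s_1$ and $t_2$ when $k\ge 2$.

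\textbf{Comparison.} Your argument does not depend on the intricate index bookkeeping in the proof of Theorem~\ref{polynthm}. It also shows exactly when the residue locks in at $mq$: at the first $R$ that follows an $M$. This makes the dichotomy with the exceptional sequences $M^sR^t$ transparent. The paper's route, once the closed formula is available, is a one-line inspection. It also ties the residue $mq$ to the central rectangles in the schematic picture of the partitions.
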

\begin{proof} For (1), note that the second case occurs when the partition is generated by $M^sR^t$ for $s,t \geq 0$ and includes the pure powers $m^s$ and $q^t$ generated by $M^s$ and $R^t$, respectively. Otherwise, we may assume that $\lambda$ is generated by $M^{s_k}R^{t_k}\cdots M^{s_1}R^{t_1}$, where $k\geq 2$ and $s_k,t_1\geq 0$ while all other exponents are strictly positive. Then every term of the characteristic polynomials (\ref{charpolyodd}) and (\ref{charpolyeven}) is divisible by either $m(m-1)$ or $q(q-1)$, except the ``middle terms,'' which are powers of $m$ times  powers of $q$. As $d$ divides both $m^2-m$ and $q^2-q$, the characteristic polynomial reduces to $mq \pmod d$. The proofs for (2) and (3) are very similar to each other. For (2), note that $q \equiv 1 \pmod{q-1}$. Setting $q=1$ in (\ref{charpolyodd}) and (\ref{charpolyeven}) leads to telescoping terms in powers of $m$ that reduce to the claimed power of $m$. For (3), set $m$ to 1 instead. Alternatively, one could prove (2) and (3) using the definitions of the $M$ and $R$ operators and induction on the number of operators that generate $\lambda$. Part (4) can be derived from any of (1), (2), or (3). 
\end{proof}

\begin{remark}
This last lemma generalizes several results for CMPs in \cite{FL21}. Note that when $m=q$, part (1) leads to $n\equiv m \pmod{m(m-1)}$ or $n = m^s+m^t-1\equiv 2m -1 \pmod{m(m-1)}$, recovering \cite[Theorems 3.3]{FL21}. Reducing the modulus to $m-1$ or $m$, respectively, recovers \cite[Propositions 3.1 and  3.2]{FL21} stating that $n\equiv 1 \pmod{m-1}$ in all cases and that $n$ is either $0$ or $-1\pmod{m}$. Note that both (2) and (3) also imply that $n\equiv 1 \pmod{m-1}$ when $m=q$. 
\end{remark}

As a result, we obtain our first general restriction on which $n$ can have conjugate $(m,q)$-ary partitions. 

\begin{proposition}
Let $m,q\geq 3$ be odd positive integers. If $n$ has a conjugate $(m,q)$-ary partition, then $n$ is odd too.
\end{proposition}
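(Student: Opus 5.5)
The plan is to read the result off Lemma~\ref{4congr}(4). Since $m$ and $q$ are both odd, $m-1$ and $q-1$ are both even, so $2$ divides $\gcd(m-1,q-1)$. Lemma~\ref{4congr}(4) gives $n \equiv 1 \pmod{\gcd(m-1,q-1)}$, and reducing this congruence modulo $2$ yields $n \equiv 1 \pmod 2$. So $n$ is odd.

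As a sanity check, and as a backup route that avoids the gcd, I would also reduce part (2) or part (3) of Lemma~\ref{4congr} modulo $2$. Take part (3): $n \equiv q^t \pmod{m-1}$, and $2 \mid m-1$. Since $q$ is odd, $q^t$ is odd, so again $n$ is odd.

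A third, self-contained route bypasses Lemma~\ref{4congr} and argues by induction on the generating sequence from Theorem~\ref{repthm}. The base partition $(1)_m$ is a partition of the odd number $1$. The raise $R$ sends $n$ to $qn$, which is odd times odd, hence odd. The migration $M$ sends $n$ to $n + m^{k+1} - m^k = n + m^k(m-1)$, which adds an even number. Both operators therefore preserve oddness, and so every conjugate $(m,q)$-ary partition partitions an odd integer.

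There is no real obstacle here. The only point needing care is noticing that parity is governed by the modulus $\gcd(m-1,q-1)$, and this modulus is even exactly because both $m$ and $q$ are odd. If either base were even, the argument would give no parity information.
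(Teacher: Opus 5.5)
Your proof is correct and your main argument is the same as the paper's: since $m$ and $q$ are odd, $2 \mid \gcd(m-1,q-1)$, so Lemma~\ref{4congr}(4) reduces to $n \equiv 1 \pmod 2$. Your backup routes, via Lemma~\ref{4congr}(3) and via induction on the $R$,~$M$ generating sequence (where $R$ multiplies by odd $q$ and $M$ adds the even number $m^k(m-1)$), are also valid but not needed.
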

\begin{proof}
If both $m$ and $q$ are odd, then Lemma \ref{4congr}(4) implies that $n\equiv 1 \pmod 2$.
\end{proof}

Studying the system of three congruences for $n$ listed in Lemma \ref{4congr}(1--3) sheds some light on which $n$ have conjugate $(m,q)$-ary partitions. We note that $d =\gcd( m(m-1), q(q-1) )\geq 2$ and is always even. So, at the worst, checking $n\pmod d$ gives even or odd information about $n$ when $n\neq m^s+q^t-1$. 
Also, all positive exponent powers of $m$ and $q$ are congruent modulo $d$ to $m$ and $q$, respectively. 

\begin{example} Using mainly (1) from Lemma \ref{4congr}, we examine some specific systems to determine restrictions on $n$ such that $n$ has a conjugate $(m,q)$-ary partition.
\begin{nlist}
\item When $(m,q)=(5,3)$, we learn that $n\equiv 1 \pmod 2$ as $d=2$ or that $n=5^t+3^s-1$, which is also always odd.  
\item When $(m,q) = (6,3)$, we see that $d = 6$ so that $n = 6^s+3^t-1$ or $n\equiv 18 \equiv 0 \pmod 6$. The condition $n \equiv 3^t \pmod 5$ additionally implies that $n \not\equiv 0 \pmod 5$. Putting these together we further learn that $n\not\equiv 0\pmod{30}$. 
\item When $(m,q) = (7,3)$, we have $d = 6$ so that $n = 7^s+3^t-1$ or $n\equiv 21 \equiv 3 \pmod 6$. Notice that other than the pure powers of $7$, we have $n\equiv 3\pmod 6$ when $n = 7^s+3^t-1$, $t>0$. 
\item When $(m,q) = (8,3)$, we have $n\equiv 24 \pmod 2$ as $d=2$. Also, $n\equiv 3^t \pmod 7$. Thus, $n =  8^s+3^t-1$, or $n$ is both even and never $0\pmod 7$. 
\item When $(m,q) = (13,7)$, we have $d=6$ so that $n=13^s+7^t -1 \equiv 1 \pmod 6$ or $n\equiv 13\cdot 7 \equiv 1 \pmod 6$. Thus, $n\equiv 1 \pmod 6$ for all $n$. 
\item When $(m,q) = (15,7)$, we have $d = 42$ so that $n\equiv 15\cdot 7\equiv 21\pmod{42}$ or $n = 15^s+7^t-1 \equiv 15+7-1 \equiv 21 \pmod{42}$, as long as $s,t>0$. Therefore, we have $n=15^k$, $n=7^k$, or $n\equiv 21\pmod{42}$. 
\end{nlist}
\end{example}

These examples show how we can eliminate certain $n$ values as never having conjugate $(m,q)$-ary partitions, but these congruences form a fairly coarse filter which can only take us so far in understanding which $n$ can occur. So, we next examine some other general restrictions that we can derive from modular arithmetic conditions. These results demonstrate that in certain cases we can show that the only way to generate conjugate $(m,q)$-ary partitions of powers of $m$ or $q$ is to use trivial rectangles, where there are either $q^k$ copies of 1 or one copy of $m^k$.

\begin{theorem} \label{powersunique}
Let $m,q\geq 2$, and let $d = \gcd( m(m-1), q(q-1) )$. 
\begin{nlist}
\item If $q\not\equiv mq \pmod d$, then the only conjugate $(m,q)$-ary partition of $q^k$ is generated by $R^k$. 
\item If $m\not\equiv mq \pmod d$, then the only conjugate $(m,q)$-ary partition of $m^k$ is generated by $M^k$. 
\end{nlist}
\end{theorem}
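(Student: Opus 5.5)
The plan is to combine the congruence in Lemma~\ref{4congr}(1) with the reduction of powers modulo $d$. I treat part (1) in detail; part (2) is the same argument with the roles of $m$ and $q$ swapped. The case $k=0$ is trivial: the only partition of $1$ is $(1)_m$, generated by the empty sequence $R^0$. So assume $k\geq 1$.

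First, a basic observation. Since $d$ divides $q(q-1)$, we have $q^j-q=q(q^{j-1}-1)\equiv 0 \pmod d$ for every $j\geq 1$, so $q^j\equiv q \pmod d$. Likewise $m^j\equiv m\pmod d$ for $j\geq 1$, and in particular $m^2\equiv m$ and $q^2\equiv q \pmod d$.

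Now suppose $\lambda$ is a conjugate $(m,q)$-ary partition of $n=q^k$. Then $n\equiv q\not\equiv mq\pmod d$, so by Lemma~\ref{4congr}(1) the first alternative fails. Moreover, the proof of that lemma shows that every generating sequence with $k\geq 2$ blocks yields $n\equiv mq\pmod d$. Hence $\lambda$ is generated by $M^sR^t$ for some $s,t\geq 0$, and $q^k=m^s+q^t-1$. I split into cases on $s$ and $t$.

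\begin{itemize}
\item If $s=0$, then $q^k=q^t$, so $t=k$ and $\lambda=R^k(1)_m$, as desired.
\item If $s\geq 1$ and $t=0$, then $q^k=m^s$. Reducing mod $d$ gives $q\equiv m$, and then $mq\equiv m^2\equiv m\equiv q \pmod d$, contradicting the hypothesis.
\item If $s,t\geq 1$, then reducing mod $d$ gives $q\equiv m+q-1$, so $m\equiv 1\pmod d$. Then $mq\equiv q\pmod d$, again a contradiction.
\end{itemize}

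So the only possibility is the sequence $R^k$. Uniqueness of generating sequences (Theorem~\ref{repthm}) then ensures that $R^k(1)_m=(q^k)_m$ is the only such partition.

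For part (2), the same steps apply to $n=m^k\equiv m\pmod d$, which forces $m^k=m^s+q^t-1$.

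\begin{itemize}
\item If $t=0$, then $s=k$ and $\lambda=M^k(1)_m$.
\item If $s=0$ and $t\geq 1$, then $q^t=m^k$ gives $m\equiv q$, so $mq\equiv q^2\equiv q\equiv m\pmod d$, a contradiction.
\item If $s,t\geq 1$, then $m\equiv m+q-1$ gives $q\equiv 1$, so $mq\equiv m\pmod d$, a contradiction.
\end{itemize}

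The main point to get right is the reduction to two-block sequences $M^sR^t$. Lemma~\ref{4congr}(1) is stated as a disjunction on the value $n$, but what I need is a statement about the generating sequence. The clean way is to cite the lemma's proof directly: any sequence with at least two alternating blocks has characteristic polynomial congruent to $mq$ modulo $d$.
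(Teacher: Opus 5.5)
Your proof is correct and follows the paper's argument: you use (the proof of) Lemma~\ref{4congr}(1) to reduce to sequences $M^sR^t$, then rule out $M^s$ alone and $M^sR^t$ with $s,t>0$ by reducing modulo $d$, exactly as the paper does. Your separate treatment of $k=0$, where $q^k\equiv q \pmod d$ fails, and your explicit appeal to the lemma's proof rather than its statement are small tightenings of the same argument.
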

\begin{proof} The proofs are essentially identical, so we only show the first case. Note that $R^k$ generates a rectangular partition of $q^k$ for all $k$. Note that $M^s$ generates a partition of $m^s$. If $M^s$ generates a partition of $q^k$,  then $m^s = q^k$ implies that $m\equiv q \pmod d$, which implies that $q\equiv q^2 \equiv mq \pmod d$, a contradiction.
Our hypothesis implies that $m\not\equiv 1 \pmod d$. Thus, $M^sR^t$ with $s,t >0$ generates a partition of $m^s+q^t - 1 \equiv m+q-1  \not\equiv q \equiv q^k \pmod d$, which means that $M^sR^t$ cannot generate a partition of $q^k$. Finally, in all other cases, we partition an $n$ such that  $n \equiv mq \not\equiv q \pmod d$ by Lemma \ref{4congr}(1), but then $n \not\equiv q \equiv q^k \pmod d$. Thus, $R^k$ is the only way to generate such a partition of $q^k$.
\end{proof}
 
When $m$ is even and $q$ is odd, then conjugate $(m,q)$-ary partitions of odd numbers $n$ only occur when $n$ is a power of $q$, and such partitions only arise from trivial cases formed by $q^k$ parts that are all 1s. 

\begin{corollary}
Let $m$ be even and $q$ be odd, both positive integers at least 2. Let $n$ be an odd integer. Then $n$ has a conjugate $(m,q)$-ary partition if and only if $n=q^k$ for some $k\geq 0$. Moreover, the only conjugate $(m,q)$-ary partition of $q^k$ is generated by $R^k$. 
\end{corollary}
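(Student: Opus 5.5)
We plan to combine the parity information in Lemma \ref{4congr}(1) with Theorem \ref{powersunique}(1). Since $m$ is even, $m(m-1)$ is even, and $q(q-1)$ is always even, so $d=\gcd(m(m-1),q(q-1))$ is even. The key observation is that $mq$ is even, so $mq\equiv 0 \pmod 2$. Reducing the conclusion of Lemma \ref{4congr}(1) modulo $2$ (valid since $2\mid d$) shows that any $n$ with a conjugate $(m,q)$-ary partition either is even or equals $m^s+q^t-1$ for some $s,t\ge 0$.

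For the forward direction, let $n$ be odd with a conjugate $(m,q)$-ary partition $\lambda$. By the above, $\lambda$ must be generated by $M^sR^t$, so $n=m^s+q^t-1$. Since $q^t$ is odd, $q^t-1$ is even. So $n$ is odd exactly when $m^s$ is odd, and since $m$ is even this forces $s=0$. Then $n=q^t$ and $\lambda=R^t(1)_m=(q^t)_m$. Conversely, $q^k$ always has the conjugate $(m,q)$-ary partition $R^k(1)_m=(q^k)_m$, which consists of $q^k$ parts equal to $1$, and its conjugate $(q^k)_q$ is $q$-ary.

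For uniqueness, the argument above already shows that every conjugate $(m,q)$-ary partition of an odd $n$ is $R^t(1)_m$ with $q^t=n$, and $t$ is determined by $n=q^k$. As a cross-check we can also invoke Theorem \ref{powersunique}(1): we need $q\not\equiv mq\pmod d$, and this holds modulo $2$ since $q$ is odd and $mq$ is even. The only point needing care is the degenerate case inside Lemma \ref{4congr}(1). There $n=m^s+q^t-1$ covers the pure powers $M^s$ and $R^t$ as well as $M^sR^t$ with $s,t>0$, and the parity computation handles all of these uniformly. So I expect no real obstacle.
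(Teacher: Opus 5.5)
Your proof is correct and is essentially the paper's proof. You read Lemma~\ref{4congr}(1) modulo $2$ to force $n=m^s+q^t-1$, use parity to force $s=0$, and note that $q\not\equiv mq \pmod d$, so Theorem~\ref{powersunique}(1) gives uniqueness.

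Your extra direct derivation of uniqueness is also valid. It uses the fact that the proof of Lemma~\ref{4congr}(1) establishes the dichotomy for the generating sequence itself: either $\lambda=M^sR^t(1)_m$, or $n\equiv mq \pmod d$. This is slightly stronger than the lemma's statement about the value of $n$ alone, so you should cite the proof rather than the statement.
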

\begin{proof}
 Recall that $d = \gcd( m(m-1), q(q-1) )$ is always even. If odd $n$ has a conjugate $(m,q)$-ary partition, then $n = m^s+q^t-1$ or $n\equiv mq \pmod d$. The latter case implies that $n$ is even, so $n = m^s+q^t-1$ is the only possibility. If $s>0$, then $n$ would also be even so that $s=0$ and $n$ is a power of $q$. Finally, an even $m$ and odd $q$ imply that $q\not\equiv mq \pmod d$, which proves the second part of our claim based on the last theorem. 
\end{proof}

While we can obtain a general odd-even result for powers of $q$ as above, the situation is more subtle for other relatively prime pairs $(m,q)$. For example, $R^3$ generates a conjugate $(m,q)$-ary partition $(q^3)_m$ of $q^3$ for any pair $(m,q)$. However, for the pair $(5,3)$, the sequence $M^2R$ generates the non-rectangular partition $(1,0,2)_5$ of $3^3$ whose $(3,5)$-ary conjugate is $(1,24)_3$ generated by $MR^2$. 

\begin{example} We now look back at how these last few results apply to our earlier examples.
\begin{nlist}
\item For $(m,q)=(5,3)$ or $(13,7)$, we learn nothing new as $m,q\equiv mq \pmod d$ in both cases as $d=2$. As indicated above for $(5,3)$, the powers $3^k$ are not uniquely partitioned for $k\geq 3$. It is not clear whether the powers of $5$ are uniquely partitioned or not. Similarly, for $(13,7)$ it is unknown whether the powers of 13 and 7 are uniquely partitioned or not. Calculated data has not found any counterexamples though. 
\item For $(m,q)=(6,3)$, we have an even and odd pairing. Thus, we know that the only odd numbers that have a conjugate $(6,3)$-ary partition are powers of $3$ and that each power of $3$ is uniquely partitioned by the rectangle generated by $R^k$. 
\item For $(m,q) = (7,3)$, we have $d=6$. As $m=7\not\equiv 21 \pmod d$, we see that the only conjugate $(7,3)$-ary partitions of $7^k$ are generated uniquely by $M^k$. 
\item For $(m,q) = (8,3)$, we have essentially the same situation as $(6,3)$ given the even-odd pairing. 
\item For $(m,q) = (15,7)$, we have $d=42$ and note that neither $7$ nor $15$ are congruent to $15\cdot 7 \equiv 21 \pmod{42}$. Thus, powers of $15$ and powers of $7$ only have the conjugate $(15,7)$-ary partitions generated by $M^k$ and $R^k$, respectively. 
\end{nlist}
\end{example}

In order to demonstrate how subtle some of the issues are in these situations, we closely investigate the case of $(m,q) = (4,3)$ and point out some observations. For the rest of this section, we assume that $(m,q) = (4,3)$. 

\begin{example}
Let $(m,q) = (4,3)$. Note that $d = \gcd(4\cdot 3, 3\cdot 2) = 6$. Also note that $4\cdot 3 \equiv 0 \pmod 6$ and that $n = m^s+q^t -1 \equiv m+q-1 = 4+3-1 \equiv 0 \pmod 6$ when $s,t >0$ so that either $n=4^k$, $n=3^k$, or $n\equiv 0 \pmod 6$ if $n$ has a conjugate $(4,3)$-ary partition. The other congruences of Lemma \ref{4congr} do not give extra information. Notice that neither $4$ nor $3$ are congruent to $12\pmod 6$ so that Theorem \ref{powersunique} applies in both cases. \\[2mm]
\textit{Fact:} The powers $4^k$ and $3^k$ are each uniquely partitioned by $M^k$ and $R^k$, respectively.  \vspace{2mm}

We next look closer at the $n\equiv 0 \pmod 6$ cases and discover more information by looking at $n \pmod{24}$. \\[2mm]
\textit{Fact:} Let $\lambda$ partition $n$. Then $n\equiv 12 \pmod {24}$ if and only if $\lambda$ is generated by $R^bM R^{2a}$, where $a,b\geq 0$ but not both $0$. 
\begin{proof}
We analyze partitions $\lambda$ based on how many $M$ operators occur in their generating sequences $A$. We ignore the trivial partition of $1$ generated by the empty sequence $A$. If $A=R^k$, $k \geq 1$ (no $M$ operators), then $n = 3^k \not\equiv 0\pmod 4$ and so cannot be $12\pmod{24}$. Suppose that $A = R^bMR^k$, $b,k\geq 0$ (only one $M$ operator). Then $n = 3^b(4+ 3^k -1) = 3^b(3^k + 3)$. Note that $3^k \pmod{24}$ is $3$ when $k$ is odd and $9$ when $k$ is even and positive. When $k$ is odd, $n\equiv 3^b\cdot 6 \pmod{24}$, which is either $6$ or $18\pmod{24}$. When $k$ is even and positive, $n\equiv 3^b\cdot 12 \equiv 12 \pmod{24}$. Next consider $A = R^c MR^b MR^k$, where $b,c,k\geq 0$ but not all $0$. The second $M$ on the left adds $4^2-4 = 12$ to the previous $n$ value. Thus, it partitions a number that is either $0,6,18 \pmod{24}$, and the $R^c$ then multiplies by $3^c$, which still leaves us partitioning a number that is either $0,6,18 \pmod{24}$. For an induction, suppose that $A$ is a sequence that is either $M^k$ ($k\geq 2$) or contains at least two copies of $M$ that partitions a number $n$ that is either $0,6,18 \pmod{24}$. Then if $A=M^k$, $MA = M^{k+1}$ while $RA$ has at least 2 copies of $M$ and partitions $3\cdot 4^k \equiv 0 \pmod{24}$. In the other case, $MA$ partitions $n+4^{s+1}-4^s = n+ 4^{s-2}\cdot 48\equiv n \pmod{24}$, for some $s\geq 2$.  Also, $RA$ partitions $3\cdot n$, which will be either $0,6,18\pmod{24}$. Therefore, sequences of the form $R^b M R^{2a}$, where $a,b$ are not both $0$, are the only ones that generate a partition of $n$ with $n\equiv 12\pmod{24}$. 
\end{proof}

Examining this case even more closely, the sequences  $R^b M R^{2a}$, where $a,b$ are not both $0$, have $n = 3^b(3^{2a} + 3)$, which will always be $4\pmod 8$. If $a,b\geq 1$, then $n\equiv 0\pmod 9$. If $n = 3^{2a} + 3$, where $a\geq 1$, then $n\equiv 3 \pmod 9$ while if $n = 3^b\cdot 4$, where $b\geq 1$, then $n\equiv 0,3 \pmod 9$. In other words, such $n\not\equiv 6\pmod 9$. By the Chinese Remainder Theorem, $4\pmod 8$ and $6\pmod 9$ uniquely lift to $60\pmod{72}$. As $60\pmod{72}$ descends to $12\pmod{24}$, we have proven the following fact. \\[2mm]
\textit{Fact:} Let $\lambda$ partition $n$. Then $n\not\equiv 60\pmod{72}$. 
\vspace{2mm}

Finally, in the $n \equiv 12\pmod{24}$ case, we can count how many ways one can partition $n$ using conjugate $(4,3)$-ary partitions. \\[2mm]
\textit{Fact:} Suppose that $n\equiv 12 \pmod{24}$. Then 
\begin{nlist}
\item there are exactly 2 partitions of $n$ if $n = 3^b\cdot 4$, where $b\geq 1$
\item there is exactly 1 partition of $n$ if $n= 3^b(3^{2a}+3)$, where $a\geq 2$, $b\geq 0$
\item there is no partition of $n$ otherwise.
\end{nlist}
\begin{proof}
If $n\equiv 12 \pmod{24}$, then $n$ only has a partition if $n = 3^b(3^{2a}+3)$. When $a=0$, we have $n=3^b\cdot 4$ with the partition generated by $R^b M$ for all $b\geq 1$, but that can also be attained when $n = 3^{b-1}(3^2+3) = 3^{b}\cdot 4$, which is generated by $R^{b-1}MR^2$ for $b-1\geq 0$. This first case covers all $a=0$ partitions, and the second case covers all $a=1$ partitions. For $a\geq 2$ and $b\geq 0$, $R^b M R^{2a}$ partitions $n = 3^b(3^{2a}+3)= 3^{b+1}(3^{2a-1}+1)$, where $3^{2a-1}+1> 4$. By the Fundamental Theorem of Arithmetic, none of these can equal one of the earlier cases nor can they equal each other with different exponents. All cases are thus covered by one of the three claimed cases.
\end{proof}
\end{example}

In this last example, we used the results of this section to discover some values of $n$ that can be represented by a $(4,3)$-ary partition. Computationally, we have seen that there are many additional values of $n$ beyond those covered in the example such that $n \equiv 0\pmod 6$ and the number of conjugate $(4,3)$-ary partitions is positive. For example, considering the $8000$ possible values of $n$ which are 0 modulo 6 and $n\leq 48000$, we found that $3784$ of them have at least 1 conjugate $(4,3)$-ary partition. However, only $17$ of these partitions are for $n$ values in the $12$ modulo $24$ congruence class. Combining the $0$, $6$, and $18$ congruences classes, $3767$ of the $6000$ eligible $n$ values can be expressed with a $(4,3)$-ary partition. We have not found a way to predict or further classify which specific $n$ values will appear.

In general, the results in the $(4,3)$-ary example give evidence to the difficulty of identifying all values of $n$ that have $(m,q)$-ary partitions. We can use facts in this section as in the last example to narrow down possible $n$ values in certain eligible congruence classes, but this does leave unanswered questions.

\begin{ques}
Are there any general theorems that tie together the observations in the examples above into a more coherent story? If specific $n$ cannot be classified, can one at least calculate the density of integers $n$ that have a conjugate $(m,q)$-ary partition? 
\end{ques}

\section{Self-conjugate CMPs of powers of \textit{m}}

In this section we consider conjugate $(m,m)$-ary partitions, which are the CMPs of \cite{FL21}. More specifically, we study those that are self-conjugates. In Corollary \ref{selfconj} we saw that self-conjugate CMPs are generated by $M,R$ sequences whose powers are a palindrome. Even this group is too large to study fully right now so we further restrict to self-conjugate CMPs that partition some power of $m$. In \cite[Lemma 4.2]{FL21}, it was noted (in our new language) that $(MR)^m$ is a self-conjugate CMP of $m^{m+1}$. Corollary \ref{selfconj} proves this partition is self-conjugate and Example \ref{MRk} indicates it is a partition of $m\cdot m^{m} + (m-m)m^{m-1} = m^{m+1}$. We also note that this family consists of non-square partitions as all square self-conjugate CMPs are of the form $R^kM^k$ and partition $m^{2k}$ for all $k$.

Inspired by these families of self-conjugate CMPs of a power of $m$, we used computational experiments to discover other infinite families of such non-square partitions. In the following results, we will rely heavily on Remark \ref{usefulremark} and Lemma \ref{commlemma} and make use of a mixture of all four operators $M,R,S$, and $J$. We start with a lemma that will aid our calculations. This lemma is true for general pairs $(m,q)$. 

\begin{lemma} \label{calclemma} Let $m,q\geq 2$. 
Let $A$ denote a sequence of $M$ and $R$ operators with $s$ copies of $M$ and $t$ copies of $R$ in any order. Then
\begin{nlist}
\item $\chi(MJA) = (m^{s+1}-m^s) + (q^{t+1}-q^t) + \chi(A)$, where $MJA$ has $s+1$ copies of $M$ and $t+1$ copies of $R$. 
\item $\chi(RSA) = mq\chi(A)$, where $RSA$ has $s+1$ copies of $M$ and $t+1$ copies of $R$. 
\item $\chi(MJRSA) = (m^{s+2}-m^{s+1}) + (q^{t+2}-q^{t+1}) +mq \chi(A)$, where $MJRSA$ has $s+2$ copies of $M$ and $t+2$ copies of $R$. 
\end{nlist}
\end{lemma}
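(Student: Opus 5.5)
The plan is to work directly from the definitions of the operators in Definition~\ref{opdef}, using Corollary~\ref{opcount} to read off the total multiplicity and the top index of $A(1)_m$. Write $A(1)_m=\lambda=(a_k,\ldots,a_0)_m$ with $a_k\neq 0$. By Corollary~\ref{opcount}, $k=s$ and $\sum_i a_i=q^t$. We also have $\chi(A)=\sum_i a_i m^i$, which is the integer $n$ partitioned by $\lambda$ when $m,q$ are evaluated. Since the operator definitions are formal in $m$ and $q$, every identity below holds as polynomials in $\zz[m,q]$.

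For (1), first apply $J$. By Definition~\ref{opdef}, $J(\lambda)=(a_k,\ldots,a_1,a_0+q^{t+1}-q^t)_m$. This partitions $\chi(A)+q^{t+1}-q^t$, its total multiplicity is $q^{t+1}$, and its top index is still $k=s$ with top multiplicity $a_k$. Next apply $M$, which adds $m^{k+1}-m^k=m^{s+1}-m^s$. That gives the stated formula. For the operator counts, $M$ contributes one copy of $M$. For $J$, observe that $JA$ is a conjugate $(m,q)$-ary partition whose top index is $s$ and whose total multiplicity is $q^{t+1}$. Corollary~\ref{opcount} then says its unique $R,M$ generating sequence has $s$ copies of $M$ and $t+1$ copies of $R$. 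So $MJA$, rewritten as an $R,M$ sequence by Theorem~\ref{repthm}, has $s+1$ and $t+1$ copies. This is the sense in which the count statement should be read.

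For (2), $S$ multiplies every part by $m$ and $R$ multiplies every multiplicity by $q$, so $\chi(RSA)=mq\chi(A)$. For the counts, $SA$ has top index $s+1$ and total multiplicity $q^t$. Applying $R$ leaves the top index at $s+1$ and makes the total $q^{t+1}$, and Corollary~\ref{opcount} finishes the count. Alternatively, use Remark~\ref{usefulremark} and Lemma~\ref{commlemma}: $S$ commutes to the right end and becomes an $M$ applied to $(1)_m$.

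For (3), apply (1) with $A$ replaced by $RSA$. By (2), this sequence has $s+1$ copies of $M$ and $t+1$ copies of $R$ and has characteristic polynomial $mq\chi(A)$. Substituting gives $(m^{s+2}-m^{s+1})+(q^{t+2}-q^{t+1})+mq\chi(A)$ with $s+2$ and $t+2$ copies, as claimed.

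The only delicate point is interpretive rather than computational. $S$ and $J$ are not $R,M$ operators, so "copies of $M$ and $R$" must be understood via the unique $R,M$ generating sequence of Theorem~\ref{repthm}. I would handle this entirely through Corollary~\ref{opcount}, by tracking the top index and the total multiplicity, as done above.
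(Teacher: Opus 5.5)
Your proof is correct, and the polynomial identities are derived the same way as in the paper. You use Definition~\ref{opdef} together with Corollary~\ref{opcount} to pin down the top index $s$ and total multiplicity $q^t$ of $A(1)_m$, and then feed (2) into (1) to get (3).

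The difference lies in how the operator counts are justified. The paper uses Lemma~\ref{commlemma} to move $J$ and $S$ to the right end, and Remark~\ref{usefulremark} ($J(1)_m=R(1)_m$, $S(1)_m=M(1)_m$) to obtain the explicit identities $MJA=MAR$ and $RSA=RAM$; you mention this only as an alternative for (2). You instead read off the top index and total multiplicity of the output partition and invoke Corollary~\ref{opcount} with uniqueness.

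Your version uses only partition-level data and needs no commutation. Since your argument for (1) depends only on those two invariants of the input, applying it to $RSA$ in (3) is automatically legitimate. The paper's version gives more. It exhibits the actual $R,M$ word, which resolves the interpretive issue you flag: $MAR$ and $RAM$ are words to which Definition~\ref{polyndef} literally applies. It is also the form of rewriting the paper relies on in Section~5, e.g.\ $M^k(MR)^BR^k=(MJ)^k(MR)^B$.

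There is one harmless slip. When $s=0$, $J$ does change the top multiplicity (to $q^{t+1}$), so ``with top multiplicity $a_k$'' is false there. Only the top index is used by $M$, so the argument is unaffected.
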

\begin{proof}
By combining Definition \ref{opdef} and Corollary \ref{opcount}, we can see the first claim in (1). By Remark \ref{usefulremark} and Lemma \ref{commlemma}, we see that $MJA = MAJ = MAR$, and so we have added an additional $M$ and $R$ to the sequence. The first part of (2) follows from Definition \ref{opdef} while $RSA = RAS  = RAM$ gives the second part. Combining (1) and (2) yields (3).
\end{proof}

We now present our new infinite families of self-conjugate CMPs. 

\begin{theorem}
Let $m=q\geq 2$ be integers. Set $B = m^{k+1}-m^k - m^{k-1} - \cdots - m$. Then $M^k(MR)^B R^k$ generates a self-conjugate CMP of $m^{B+k+1}$ for all integers $k\geq 0$. 
\end{theorem}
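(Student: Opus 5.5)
Self-conjugacy follows at once from Corollary \ref{selfconj}. When $k\geq 1$ the exponent sequence of $M^k(MR)^BR^k = M^{k+1}R(MR)^{B-2}MR^{k+1}$ is $(k+1,1,1,\ldots,1,k+1)$. This is a palindrome. (One should check $B\geq 2$ here; indeed $B=m^k(m-1)-(m^k-m)/(m-1)\geq 2$ for $k\geq1$, $m\geq 2$.) When $k=0$ we get $B=m-1$ and the sequence is simply $(MR)^{m-1}$, which is again a palindrome, all of whose entries equal $1$. So the real content is computing $\chi(M^k(MR)^BR^k)$ and showing it equals $m^{B+k+1}$.

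For the value, I will rewrite the sequence using Remark \ref{usefulremark} and Lemma \ref{commlemma}, as Lemma \ref{calclemma} suggests. Since $m=q$, Remark \ref{usefulremark} gives $R^k(1)_m=J^k(1)_m$. Therefore $M^k(MR)^BR^k(1)_m = M^k(MR)^BJ^k(1)_m$. Lemma \ref{commlemma} says $J$ commutes with both $M$ and $R$, so this equals $M^kJ^k(MR)^B(1)_m=(MJ)^k(MR)^B(1)_m$. Now apply Lemma \ref{calclemma}(1) $k$ times, starting from $A=(MR)^B$. That $A$ has $s=t=B$, and each application of $MJ$ raises both counts by one. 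At step $j$ (for $j=0,\ldots,k-1$) we add $2(m^{B+j+1}-m^{B+j})$. Summing telescopes to
$$\chi = \chi((MR)^B) + 2(m^{B+k}-m^B).$$

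Example \ref{MRk} with $m=q$ gives $\chi((MR)^B)=Bm^B+(m-B)m^{B-1}$. The total is then
$$Bm^B + m^B - Bm^{B-1} + 2m^{B+k} - 2m^B = m^{B-1}\bigl(B(m-1) - m + 2m^{k+1}\bigr).$$
It remains to check that $B(m-1)-m+2m^{k+1}=m^{k+2}$. Using $m^{k-1}+\cdots+m=(m^k-m)/(m-1)$, we get $B(m-1)=m^{k+2}-2m^{k+1}+m^k-m^k+m = m^{k+2}-2m^{k+1}+m$. Adding $-m+2m^{k+1}$ gives $m^{k+2}$, and hence $\chi=m^{B+k+1}$. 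This also covers $k=0$, where $B=m-1$ and the formula gives $\chi=m^m$, matching the known $(MR)^m$ pattern.

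The main obstacle is the bookkeeping in the operator rewriting. I need to confirm that the commutation $J$ past $M,R$ is legitimate when acting on the specific partitions involved; Lemma \ref{commlemma} is stated for arbitrary $\lambda$, so this should be fine. I also need to track the counts $s,t$ correctly in Lemma \ref{calclemma}(1) at each stage. The final algebraic simplification is routine.
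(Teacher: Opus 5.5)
For $k\ge 1$ your argument is essentially the paper's proof. You rewrite $M^k(MR)^BR^k$ as $(MJ)^k(MR)^B$, apply Lemma \ref{calclemma}(1) $k$ times to pick up $2(m^{B+k}-m^B)$, insert $\chi((MR)^B)$ from Example \ref{MRk}, and simplify. Factoring out $m^{B-1}$ and using the geometric-sum closed form is just a tidier version of the paper's term-by-term cancellation, and your palindrome and $B\ge 2$ checks are fine.

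The one genuine error is the case $k=0$. You take $B=m-1$ there, but with that value the claim is false. By Example \ref{MRk}, $\chi((MR)^{m-1})=(m-1)m^{m-1}+m^{m-2}=m^m-m^{m-1}+m^{m-2}\neq m^m$; for $m=2$, $MR(1)_2=(1,1)_2$ partitions $3$, not $4$. Your sentence ``the formula gives $\chi=m^m$'' only substitutes into the final answer, and the identity behind it fails: $B(m-1)-m+2m^{k+1}=m^2-m+1$ when $k=0$ and $B=m-1$.

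Your own closed form $(m^k-m)/(m-1)$ for $m^{k-1}+\cdots+m$ equals $-1$ at $k=0$, so it silently produces $B=m$, contradicting your stated $B=m-1$. Likewise, ``the known $(MR)^m$ pattern'' is $(MR)^m$ partitioning $m^{m+1}$, not $(MR)^{m-1}$ partitioning $m^m$.

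The intended reading is $B=m^{k+1}-\sum_{j=1}^{k}m^j=m^{k+1}-\frac{m^{k+1}-m}{m-1}$; the paper notes that $B$ has the $k+1$ terms $m^{k+1},m^k,\dots,m$. This agrees with your $B$ for $k\ge1$, gives $B=m$ at $k=0$, and satisfies $B(m-1)=m^{k+2}-2m^{k+1}+m$ for all $k\ge0$. With that correction your computation handles $k=0$ uniformly, since it is exactly $(MR)^m$ partitioning $m^{m+1}$, and the self-conjugacy part is unaffected.
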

\begin{proof} The self-conjugate claim is verified by Corollary \ref{selfconj}. 
By Remark \ref{usefulremark} and Lemma \ref{commlemma}, $M^k(MR)^B R^k = M^k(MR)^B J^k =  M^k J^k (MR)^B = (MJ)^k (MR)^B$.
By $k$ applications of Lemma \ref{calclemma}(1), we have the polynomial  $q^{B+k}-q^{B} + m^{B+k}-m^B  + \chi((MR)^B) = 2(m^{B+k}-m^B)+ \chi((MR)^B)$. Example \ref{MRk} gives us $\chi((MR)^B)$, and so we see that $M^k(MR)^B R^k$ generates a self-conjugate CMP of 
$$
\begin{array}{c}
2(m^{B+k}-m^B) + Bm^B + (m-B)m^{B-1} \\[2mm]
= 2(m^{B+k}-m^B) + (m^{B+k+1}-m^{B+k} - m^{B+k-1} - \cdots - m^{B+1}) \\
+ (m^B - (m^{B+k}-m^{B+k-1} - m^{B+k-2} - \cdots - m^{B}) ) \\[2mm]
= 2(m^{B+k}-m^B) + (m^{B+k+1}-m^{B+k} - m^{B+k-1} - \cdots - m^{B+1}) \\
+ (m^B - m^{B+k}+m^{B+k-1} + m^{B+k-2} + \cdots + m^{B} ) \\[2mm]
= 2(m^{B+k}-m^B) + m^{B+k+1} -2m^{B+k} + 2m^B = m^{B+k+1}
\end{array}
$$
\end{proof}

\begin{theorem}
Let $m=q\geq 2$ be integers.  Set $B = m^2-(2k+1)m$. Then $(MR)^kM(MR)^BR(MR)^k$ generates a self-conjugate CMP of $m^{B+2(k+1)}$ for all integers $0 \leq k \leq \frac{m-1}{2}$. 
\end{theorem}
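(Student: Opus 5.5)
The plan has two parts. First we show that the word is palindromic in the sense of Corollary \ref{selfconj}. Then we compute the integer it partitions with a normalized invariant that makes every $R$ step trivial.

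\textbf{Self-conjugacy.} Expand $(MR)^kM(MR)^BR(MR)^k$ into maximal blocks $M^{s_j}R^{t_j}$.
\begin{itemize}
\item The left factor $(MR)^k$ gives $k$ blocks with exponents $(1,1)$.
\item The single $M$ merges with the first $M$ of $(MR)^B$, giving a block $M^2R$.
\item The next $B-2$ copies of $MR$ give blocks $(1,1)$.
\item The last $M$ of $(MR)^B$, followed by $R\cdot R$, gives a block $MR^2$.
\item The right factor $(MR)^k$ gives $k$ more blocks $(1,1)$.
\end{itemize}
So the exponent list is $1,1,\dots,1,2,1,1,\dots,1,1,2,1,\dots,1,1$, which is read the same backwards. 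Corollary \ref{selfconj} then gives $\lambda=\lambda'$.

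The degenerate cases need a separate check. These are $B=0$ (where the word is $(MR)^kMR(MR)^k$, plainly a palindrome) and $B=1$ (where the word contains $M^2R^2$). This is also where the hypothesis $k\le \frac{m-1}{2}$ enters: it is exactly what guarantees $B=m(m-2k-1)\ge 0$.

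\textbf{The value of $n$.} With $m=q$, Definition \ref{opdef} and Corollary \ref{opcount} say the following. If a word currently has $s$ copies of $M$ and $t$ copies of $R$ and partitions $n$, then:
\begin{itemize}
\item applying $R$ sends $n\mapsto mn$;
\item applying $M$ sends $n\mapsto n+(m-1)m^{s}$.
\end{itemize}
Set $g=n/m^{s+t}$. Then $R$ leaves $g$ unchanged, and an $M$ applied when the current number of $R$'s is $t$ adds $(m-1)m^{-t-1}$ to $g$. Starting from $g=1$ for $(1)_m$, we get
$$g=1+(m-1)\sum_{\text{each }M}m^{-(t+1)}.$$
We then read the word from right to left and record $t$ at each $M$.
\begin{itemize}
\item The rightmost $(MR)^k$ contributes $M$'s at $t=1,\dots,k$.
\item After the extra $R$, the block $(MR)^B$ contributes $M$'s at $t=k+2,\dots,k+B+1$.
\item The lone $M$ comes at $t=k+B+1$.
\item The leftmost $(MR)^k$ contributes $M$'s at $t=k+B+2,\dots,2k+B+1$.
\end{itemize}

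Each group is a geometric sum, and each telescopes because of the factor $(m-1)$. The first group gives $1-m^{-k}$. The second gives $m^{-k-1}-m^{-k-B-1}$. The lone $M$ gives $(m-1)m^{-k-B-2}$. The last group gives $m^{-k-B-1}-m^{-2k-B-1}$. The total number of operators is $p=2(2k+B+1)$, so $n=m^{p}g$. The claim $n=m^{B+2k+2}$ is then equivalent to
$$g=m^{-(B+2k)}.$$

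\textbf{Main obstacle.} The hard step is showing that these telescoped pieces collapse to $m^{-(B+2k)}$ once $B=m^2-(2k+1)m$ is substituted. Most terms cancel, but $B$ sits in exponents while the identity needs $B$ to appear linearly. So we expect the leftover terms to combine only through a count of $B$ equal contributions, much as Example \ref{MRk} produced the linear term $km^{k}$.

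If the geometric bookkeeping in $g$ becomes unwieldy, we switch to a structural computation. Using Remark \ref{usefulremark} and Lemma \ref{commlemma}, we rewrite the outer $R(MR)^k$ and $(MR)^kM$ pieces as $J$'s and $S$'s moved outward, just as in the proof of the preceding theorem. We then apply Lemma \ref{calclemma}(1)--(3) repeatedly, and insert $\chi((MR)^B)=Bm^B+(m-B)m^{B-1}$ from Example \ref{MRk}. That last formula is exactly where the linear appearance of $B$ arises.

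Whichever route we take, we will finish by expanding in powers of $m$ and checking that the coefficient relation forced by $B+(2k+1)m=m^2$ kills every term except $m^{B+2k+2}$.
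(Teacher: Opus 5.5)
Your self-conjugacy argument is fine. Since $B=m(m-2k-1)$ is a multiple of $m$, the case $B=1$ never actually occurs.

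The gap is in the computation of $n$: your update rule for $M$ is wrong. With $g=n/m^{s+t}$, applying $M$ sends $n\mapsto n+(m-1)m^{s}$ and also $s\mapsto s+1$. So $g\mapsto g/m+(m-1)m^{-t-1}$; it does not simply add $(m-1)m^{-t-1}$. For instance, $M(1)_m=(1,0)_m$ partitions $m$, so $g=1$, whereas your formula gives $2-\frac1m$.

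As a result, your expression $g=1+(m-1)\sum m^{-(t+1)}$ is $1$ plus positive terms. It therefore exceeds $1$ and can never equal the target $m^{-(B+2k)}\le 1$. The ``main obstacle'' you describe is not a delicate cancellation; it is a contradiction produced by this error. Your telescoped group values are also off by a factor of $m$; for example, $(m-1)\sum_{t=1}^k m^{-(t+1)}=m^{-1}-m^{-k-1}$.

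The fallback you mention is the paper's route, but as written it is only a pointer. The paper needs three things you do not supply:
\begin{itemize}
\item the explicit rewriting $(MR)^kM(MR)^BR(MR)^k=(MJRS)^kMJ(MR)^B$;
\item the computation $\chi(MJ(MR)^B)=m^{B+2}-2k(m^{B+1}-m^B)$, obtained from Lemma~\ref{calclemma}(1) and Example~\ref{MRk};
\item an iteration of Lemma~\ref{calclemma}(3) showing that each application of $MJRS$ raises all exponents by $2$ and lowers the coefficient $2k$ by $2$.
\end{itemize}

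Your invariant idea does work if you normalize by $m^{t}$ alone. Put $h=n/m^{t}$, where $s,t$ are the current numbers of $M$'s and $R$'s. Then $R$ fixes $h$, and $M$ adds $(m-1)m^{s-t}$, with $s,t$ counted before the $M$ is applied. Read the word right to left:
\begin{itemize}
\item $s-t=-1$ at every $M$ of the two outer $(MR)^k$ factors and at the lone $M$;
\item $s-t=-2$ at every $M$ of $(MR)^B$.
\end{itemize}
Hence
\[
h=1+(2k+1)\frac{m-1}{m}+B\,\frac{m-1}{m^2}.
\]
The word has $t=2k+B+1$ copies of $R$, so
\[
n=m^{2k+B+1}h=m^{2k+B-1}\bigl(m^2+(2k+1)m(m-1)+B(m-1)\bigr)=m^{2k+B-1}\cdot m^3=m^{B+2(k+1)},
\]
where the middle equality uses $B=m^2-(2k+1)m$. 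This is exactly where $B$ enters linearly, and it gives a shorter argument than the paper's $(MJRS)^k$ iteration.
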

\begin{proof} The self-conjugate claim is verified by Corollary \ref{selfconj} again. Note that the upper bound on $k$ is necessary to ensure that $B$ is non-negative. By Remark \ref{usefulremark} and Lemma \ref{commlemma}, $(MR)^k M(MR)^B R(MR)^k = (MJRS)^k MJ (MR)^B$. Example \ref{MRk} gives
$$
\begin{array}{c}
\chi((MR)^B) = Bm^B +(m-B)m^{B-1} \\[2mm]
= m^{B+2} - (2k+1)m^{B+1} +m^B - m^{B+1} +(2k+1)m^B \\[2mm]
= m^{B+2} - 2(k+1)m^{B+1} + 2(k+1)m^B
\end{array}
$$
Lemma \ref{calclemma}(1) gives
$$
\begin{array}{c}
\chi(MJ(MR)^B) =  2(m^{B+1}-m^B) + m^{B+2} - 2(k+1)m^{B+1} + 2(k+1)m^B \\[2mm]
=  m^{B+2} - 2k(m^{B+1} -m^B),
\end{array}
$$
where the sequence is generated by $B+1$ copies each of $M$ and $R$. Then Lemma \ref{calclemma}(3) gives
$$
\begin{array}{c}
\chi((MJRS) MJ(MR)^B) = 2(m^{B+3}-m^{B+2}) + m^2(m^{B+2} - 2k(m^{B+1} -m^B)) \\[2mm]
= 2(m^{B+3}-m^{B+2}) + (m^{B+4} - 2k(m^{B+3} -m^{B+2})) \\[2mm]
= m^{B+4} - 2(k-1)(m^{B+3} -m^{B+2}),
\end{array}
$$
where the sequence is now generated by $B+3$ copies each of $M$ and $R$. Repeating this process will progressively increase powers by 2 while removing another pair of the lower degree terms for each application of $(MJRS)$. In the end,
$$
\chi((MJRS)^k MJ (MR)^B) = m^{B+2 +2k} = m^{B+2(k+1)}.
$$
\end{proof}

When $m$ is even, we can also construct several intertwined families of self-conjugate CMPs distinct from those above.  We will build these families in stages.

\begin{lemma}
Let $m=q\geq 2$ be integers, and let $k\geq 1$ be an integer. Then $(MR^2MRM^2R)^{k-1}$ consists of $4k-4$ operations each of $M$ and $R$ and generates a self-conjugate CMP of $2(k-1)m^{4k-3}+m^{4k-4}-2(k-1)m^{4k-5}$. 
\end{lemma}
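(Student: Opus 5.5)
The plan is to verify the three claims separately: the operator count by direct counting, self-conjugacy via Corollary~\ref{selfconj}, and the value of $n$ by induction on $k$ using the characteristic polynomial. Throughout we set $P = MR^2MRM^2R$. For the count, $P$ contains $1+1+2 = 4$ copies of $M$ and $2+1+1 = 4$ copies of $R$, so $P^{k-1}$ contains $4k-4$ of each. For self-conjugacy, $P^{k-1}$ is already in the alternating block form of Corollary~\ref{repcor}, since $P$ ends in $R$ and begins with $M$, so no blocks merge when copies of $P$ are concatenated. Its exponent sequence is $(1,2,1,1,2,1)$ repeated $k-1$ times. The block $(1,2,1,1,2,1)$ is itself a palindrome, so any repetition of it is too, and Corollary~\ref{selfconj} then gives $\lambda=\lambda'$. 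The case $k=1$ is the empty sequence, the trivial partition $(1)_m$ of $1$, and the formula gives $0+m^0-0=1$; this serves as the base case.

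For the value, I will not use the closed forms of Theorem~\ref{polynthm}. Instead I will track how each operator changes $n$ when applied on the left, directly from Definition~\ref{opdef}. If $A$ is a sequence containing $s$ copies of $M$, then $A(1)_m$ has top index $s$ by Corollary~\ref{opcount}. Hence
\[
\chi(RA)=q\,\chi(A), \qquad \chi(MA)=\chi(A)+m^{s+1}-m^{s}.
\]
I write $P^{k}=P\cdot P^{k-1}$ and apply the operators of the new left block from right to left to $x_k=\chi(P^{k-1})$, with $m=q$ and $s=4(k-1)$. The steps are as follows.
\begin{itemize}
\item Apply $R$: the value becomes $m x_k$.
\item Apply $M^2$: add $m^{s+2}-m^{s}$.
\item Apply $R$: multiply by $m$.
\item Apply $M$: add $m^{s+3}-m^{s+2}$.
\item Apply $R^2$: multiply by $m^2$.
\item Apply $M$: add $m^{s+4}-m^{s+3}$.
\end{itemize}
Collecting terms gives the recurrence
\[
x_{k+1}=m^4x_k+m^3(m^{s+2}-m^s)+m^2(m^{s+3}-m^{s+2})+(m^{s+4}-m^{s+3}) = m^4x_k+2m^{4k+1}-2m^{4k-1}.
\]

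The inductive step is then a routine substitution. Inserting $x_k=2(k-1)m^{4k-3}+m^{4k-4}-2(k-1)m^{4k-5}$ into the recurrence gives
\[
x_{k+1}=2k\,m^{4k+1}+m^{4k}-2k\,m^{4k-1},
\]
which is the claimed formula with $k$ replaced by $k+1$. The only place I expect to need care is the bookkeeping of the exponent $s$ inside the $M$-increments. The number of $M$'s already applied grows within the block, from $4(k-1)$ to $4(k-1)+2$ and then $4(k-1)+3$, and each increment must use the current top index. It must also be scaled by every later $R$ in the block, which is why the first increment picks up the factor $m^3$ and the second the factor $m^2$.
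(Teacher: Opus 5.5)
Your proposal is correct and is essentially the paper's proof: induction on $k$, applying the new block $MR^2MRM^2R$ operator by operator, using that $R$ multiplies by $m$ and $M$ adds $m^{s+1}-m^s$ with $s$ the current number of $M$'s. The paper does this in the three segments $M^2R$, $MR$, $MR^2$; you instead collapse them into the single recurrence $x_{k+1}=m^4x_k+2m^{4k+1}-2m^{4k-1}$, so the paper's separately computed $k=2$ base case becomes unnecessary, and you also spell out the palindrome argument that the paper leaves as ``symmetry in the exponents.''
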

\begin{proof}
The self-conjugate claim follows from the symmetry in the exponents used, while the operator count is clear from the formula. We then proceed by induction on $k$. The case $k=1$ is trivial, and the case $k=2$ follows because
$$
\begin{array}{c}
\chi(MR^2MR M^2 R) = \chi(MJ(RS)^2(MR)) = 2(m^4-m^3) + m^4(2m-1) \\[2mm]
 = 2m^5 +m^4 -2m^3. 
\end{array}
$$
Suppose that the result holds for some $k\geq 2$. We then investigate $(MR^2MRM^2R)^{k}$ by applying the sequence $MR^2MRM^2R$ in three segments to the formula for the inductive hypothesis. Starting with the application $M^2R$ we use Definition \ref{opdef} and Corollary \ref{opcount} to get 
$$
\begin{array}{c}
m(2(k-1)m^{4k-3}+m^{4k-4}-2(k-1)m^{4k-5}) + m^{4k-2}-m^{4k-4} \\[2mm]
= (2k-1)m^{4k-2}+m^{4k-3}-(2k-1)m^{4k-4} 
\end{array}
$$
because $R$ multiplies the polynomial by $m$ while increasing only the number of $R$ operators, and $M^2$ adds $m^{4k-2}-m^{4k-4}$ because the inductive hypothesis formula had $4k-4$ operations of $M$. We now have $4k-3$ operations of $R$ and $4k-2$ of $M$. We then apply $MR$ to get
$$
\begin{array}{c}
m((2k-1)m^{4k-2}+m^{4k-3}-(2k-1)m^{4k-4}) + m^{4k-1}-m^{4k-2}   \\[2mm]
= 2km^{4k-1}-(2k-1)m^{4k-3}
\end{array}
$$
with operation counts of $4k-2$ for $R$ and $4k-1$ for $M$. Finally, apply $MR^2$ to get the necessary result
$$
\begin{array}{c}
m^2(2km^{4k-1}-(2k-1)m^{4k-3}) + m^{4k}-m^{4k-1} \\[2mm]
= 2km^{4k+1} + m^{4k} - 2km^{4k-1}
\end{array}
$$
\end{proof}

Setting $k=\frac{m}{2}+1$ for even values of $m=q$ above yields a family of self-conjugate CMPs. 

\begin{corollary}
If $m=q\geq 2$ are even integers, then $(MR^2MR M^2 R)^{\frac{m}{2}}$ generates a self-conjugate CMP of $m^{2(m+1)}$.
\end{corollary}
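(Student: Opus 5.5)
Set $k=\frac{m}{2}+1$ in the preceding lemma. Since $m\geq 2$ is even, $k$ is an integer with $k\geq 2$, and $k-1=\frac{m}{2}$. The lemma then says that $(MR^2MRM^2R)^{\frac{m}{2}}$ generates a self-conjugate CMP of
$$
2(k-1)m^{4k-3}+m^{4k-4}-2(k-1)m^{4k-5}.
$$
The plan is simply to substitute and simplify.

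First, $2(k-1)=m$ and $4k-4=2m$. So the exponents are $4k-3=2m+1$, $4k-4=2m$, and $4k-5=2m-1$. The number partitioned is therefore
$$
m\cdot m^{2m+1}+m^{2m}-m\cdot m^{2m-1} = m^{2m+2}+m^{2m}-m^{2m} = m^{2(m+1)}.
$$
This is exactly the claimed value.

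Self-conjugacy needs no extra work. It is part of the lemma's conclusion and follows from Corollary~\ref{selfconj}: the exponent sequence of $(MR^2MRM^2R)^{\frac{m}{2}}$ is $(1,2,1,1,2,1,\ldots)$, which is a palindrome.

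The only point to check is that the lemma applies. It is stated for all integers $k\geq 1$, and $k=\frac{m}{2}+1$ is such an integer precisely because $m$ is even. No step here is a real obstacle: the arithmetic reduces to a cancellation of the $m^{2m}$ terms.
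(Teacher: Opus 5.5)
Your proposal is correct and is the paper's own argument: the paper obtains this corollary by setting $k=\frac{m}{2}+1$ in the preceding lemma. Your verification of the exponents ($2(k-1)=m$, $4k-4=2m$) and the cancellation of the $m^{2m}$ terms supplies the arithmetic the paper leaves implicit, and self-conjugacy indeed comes from the lemma, equivalently from the palindromic exponent sequence via Corollary~\ref{selfconj}.
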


The next step in building our new family of self-conjugate CMPs is to cap the sequences above with $M^2R$ on the left and $MR^2$ on the right to set the base cases for the family members.

\begin{lemma}
Let $m=q\geq 2$ be any integers, and let $k\geq 1$ be an integer. Then $M^2R(MR^2MRM^2R)^{k-1}MR^2$ consists of $4k-1$ operations each of $M$ and $R$ and generates a self-conjugate CMP of $2km^{4k-1}+m^{4k-2}-2km^{4k-3}$. 
\end{lemma}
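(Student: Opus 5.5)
The plan mirrors the proof of the preceding lemma. Self-conjugacy follows from Corollary \ref{selfconj}. The exponent sequence of $M^2R(MR^2MRM^2R)^{k-1}MR^2$, read as blocks, is $(2,1,1,2,1,1,2,1,\ldots,1,2)$. The inner blocks $MR^2MRM^2R$ concatenate so that adjacent $R$'s and $M$'s merge: $R\cdot M^2$ stays separate, and $M^2R$ followed by $MR^2$ stays separate. One checks directly that this list of exponents is a palindrome. The operator count is immediate: the inner part contributes $4k-4$ of each operator by the previous lemma, and the caps $M^2R$ and $MR^2$ contribute $3$ more of each, for a total of $4k-1$.

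For the value of $n$, I would start from the previous lemma's formula
$$P_{k}=2(k-1)m^{4k-3}+m^{4k-4}-2(k-1)m^{4k-5},$$
which is $\chi((MR^2MRM^2R)^{k-1})$ with $4k-4$ copies each of $M$ and $R$. I would then apply the caps in the order the paper uses. Since $MR^2$ acts first on $(1)_m$, I would rewrite it via Remark \ref{usefulremark} and Lemma \ref{commlemma}: $A\,MR^2 = A\,J^2 S$. Moving $J$ and $S$ left across $M$'s and $R$'s respectively is not directly allowed, since $S$ only commutes with $R$ and $M$, not with $J$. So I would instead move $S$ fully to the left (it commutes with everything in $A$) and then handle $J^2$.

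A cleaner alternative, which I will actually use, is to compute directly. By Definition \ref{opdef}, $MR^2(1)_m=(1,m^2-1)_m$. The remaining operators act on the multiplicities, and I track how $\chi$ changes using Corollary \ref{opcount}: each $R$ multiplies $\chi$ by $m$, and each $M$ adds $m^{s+1}-m^s$, where $s$ is the current number of $M$'s. With this rule, the induction from the previous lemma runs the same way with the base replaced by $\chi(MR^2)=m^2+m^2-1$. It is easier still to note the structure: prefixing by $M^2R$ multiplies by $m$ and adds $m^{s+2}-m^s$.

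So the key steps are as follows. First, prove the analogue of the previous lemma for $Q_k=\chi((MR^2MRM^2R)^{k-1}MR^2)$ by the same three-segment induction. The count is $4k-3$ operations of $M$ and $4k-4$ of $R$, and the expected form is
$$Q_k=(2k-1)m^{4k-2}\cdot m^{-1}\cdots,$$
to be determined from the small cases $k=1$ and $k=2$. Second, apply $M^2R$: multiply $Q_k$ by $m$ and add $m^{4k-1}-m^{4k-3}$. Third, verify that the result equals $2km^{4k-1}+m^{4k-2}-2km^{4k-3}$.

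A sanity check at $k=1$: $M^2RMR^2(1)_m=M^2(m,m^3-m)_m=(1,0,m-1,m^3-m)_m$. Its $\chi$ is $m^3+m^2-m+m^3-m=2m^3+m^2-2m$, which matches the claimed formula.

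The main obstacle is bookkeeping: keeping track of the current number of $M$'s, so that each migration adds the correct difference of powers, while propagating the inductive hypothesis through the three segments. I would handle it by first guessing the closed form for $Q_k$ from $k=1$ and $k=2$, and then running the induction exactly as in the previous lemma.
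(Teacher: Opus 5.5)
Your self-conjugacy and operation-count arguments are fine. However, the value of $n$, which is the real content of the lemma, is never derived. You reduce everything to a closed form for $Q_k=\chi\bigl((MR^2MRM^2R)^{k-1}MR^2\bigr)$ and then leave it as ``$(2k-1)m^{4k-2}\cdot m^{-1}\cdots$, to be determined''. Without that formula your final step (``verify that the result equals \ldots'') cannot be carried out, so the proof stops where the work begins.

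The data you would feed into that induction also contain slips. First, $\chi(MR^2)=m+m^2-1$, not $m^2+m^2-1$; your $k=1$ check silently uses the correct value. Second, $(MR^2MRM^2R)^{k-1}MR^2$ contains $4k-2$ copies of $R$, not $4k-4$. What you need is
$$Q_k=(2k-1)m^{4k-2}+m^{4k-3}-(2k-1)m^{4k-4},$$
with $4k-3$ copies of $M$. Then $mQ_k+m^{4k-1}-m^{4k-3}=2km^{4k-1}+m^{4k-2}-2km^{4k-3}$. One can also check that the segments $M^2R$, $MR$, $MR^2$ carry $Q_k$ to $Q_{k+1}$, so your route does close once this formula is stated and verified.

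The reason you give for abandoning the commutation approach is wrong. By Lemma~\ref{commlemma}, $S$ and $J$ each commute with both $M$ and $R$. The failure of $SJ=JS$ is irrelevant, because you never need to swap $J^2$ and $S$. Your proposed fix, moving $S$ fully to the left, is the one move that is not allowed, since it would pass $S$ through $J^2$.

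With $C=(MR^2MRM^2R)^{k-1}$ you get $C\,MR^2(1)_m=C\,J^2S(1)_m=J^2S\,C(1)_m$. Hence $M^2R\,C\,MR^2(1)_m=(MJ)^2RS\,C(1)_m$, which is exactly the paper's rewrite. Lemma~\ref{calclemma}(2) then gives $\chi(RSC)=m^2P_k$ with $4k-3$ copies of each operator. Two applications of Lemma~\ref{calclemma}(1) add $2(m^{4k-2}-m^{4k-3})+2(m^{4k-1}-m^{4k-2})=2(m^{4k-1}-m^{4k-3})$. This yields the claimed value in one line from the previous lemma, with no new induction.

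The same observation also gives your missing $Q_k$ directly: $Q_k=\chi(J^2S\,C)=mP_k+(m^{4k-2}-m^{4k-4})$. Here $S$ multiplies by $m$ without changing the total multiplicity $m^{4k-4}$, and $J^2$ then adds $m^{4k-2}-m^{4k-4}$.
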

\begin{proof}
The symmetry of exponents again gives the self-conjugacy while the previous lemma gives the operation count as we are adding three more operations each of $M$ and $R$. Using Remark \ref{usefulremark} and Lemma \ref{commlemma}, rewrite  $M^2R(MR^2MRM^2R)^{k-1}MR^2$ as  $(MJ)^2RS(MR^2MRM^2R)^{k-1}$ and get the polynomial
$$
\begin{array}{c}
2(m^{4k-1}-m^{4k-3}) + m^2(2(k-1)m^{4k-3}+m^{4k-4}-2(k-1)m^{4k-5}) \\[2mm]
= 2km^{4k-1}+m^{4k-2}-2km^{4k-3}
\end{array}
$$
using Lemma \ref{calclemma} and the fact that applying $RS$ results in $4k-3$ operations.
\end{proof}

We now have another family of self-conjugate CMPs for even values of $m=q$ when we set $k = \frac{m}{2}$.

\begin{corollary}
Let $m=q\geq 2$ be even integers. Then the sequence of operators $M^2R(MR^2MRM^2R)^{\frac{m-2}{2}}MR^2$ generates a self-conjugate CMP of $m^{2m}$. 
\end{corollary}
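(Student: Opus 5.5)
The plan is to specialize the preceding lemma at $k = \frac{m}{2}$ and then check the arithmetic. Since $m$ is even and $m \geq 2$, $k = \frac{m}{2}$ is a positive integer, so the lemma applies, and the exponent becomes $k-1 = \frac{m-2}{2}$. This matches the sequence $M^2R(MR^2MRM^2R)^{\frac{m-2}{2}}MR^2$ in the statement.

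First, I would note that self-conjugacy is already part of the lemma's conclusion. Independently, it follows from Corollary \ref{selfconj}: the exponent sequence of $M^2R(MR^2MRM^2R)^{k-1}MR^2$ reads $(2,1,1,2,1,1,2,1,\ldots,1,2)$, and this is a palindrome. The reason is that the middle blocks $MR^2MRM^2R$ have exponent pattern $(1,2,1,1,2,1)$, which is palindromic, and the caps $M^2R$ and $MR^2$ are mirror images of each other. When the blocks are concatenated, adjacent exponents of the same letter never merge, since each block begins with $M$ and ends with $R$, and the caps also end and begin compatibly. So the sequence is in the reduced form of Corollary \ref{repcor}.

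Second, I would evaluate the partitioned integer. The lemma gives
$2km^{4k-1}+m^{4k-2}-2km^{4k-3}$. Substituting $2k = m$ turns this into
$$m\cdot m^{2m-1} + m^{2m-2} - m\cdot m^{2m-3} = m^{2m} + m^{2m-2} - m^{2m-2} = m^{2m},$$
using $4k-1 = 2m-1$, $4k-2 = 2m-2$, and $4k-3 = 2m-3$.

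There is no real obstacle here. The only points needing care are checking that $k=\frac{m}{2}\ge 1$ holds for all even $m\ge 2$, and, in the degenerate case $m=2$, confirming that the sequence reduces to $M^2RMR^2$. In that case the claimed value is $2\cdot 2^3+2^2-2\cdot 2 = 16 = 2^4$, which agrees with the formula.
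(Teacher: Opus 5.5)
Your proposal is correct and matches the paper's argument: the paper gets this corollary by setting $k=\frac{m}{2}$ in the preceding lemma, and your substitution $2k=m$, which collapses $2km^{4k-1}+m^{4k-2}-2km^{4k-3}$ to $m^{2m}$, is exactly that step. Your separate palindrome check via Corollary~\ref{selfconj} is redundant but harmless, since self-conjugacy is already part of the lemma's conclusion.
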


Finally, we can apply repeated operations of $M^2R^2$ to the left and right to produce more self-conjugate CMPs for even values of $m$.

\begin{lemma}
Let $m=q\geq 2$, $\ell\geq 0$, and $k\geq 1$ be integers. Then the sequence $(M^2R^2)^\ell M^2R(MR^2MRM^2R)^{k-1}MR^2 (M^2R^2)^\ell$ consists of $4(k+\ell)-1$ operations each of $M$ and $R$ and generates a self-conjugate CMP of 
$$
2(k+\ell)m^{4(k+\ell)-1}+m^{4(k+\ell)-2}-2(k+\ell)m^{4(k+\ell)-3}.
$$ 
\end{lemma}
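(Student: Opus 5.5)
Self-conjugacy follows from Corollary \ref{selfconj}. The exponent sequence of $(M^2R^2)^\ell M^2R(MR^2MRM^2R)^{k-1}MR^2(M^2R^2)^\ell$ reads $2,2,\ldots,2,1,1,2,1,1,2,\ldots,1,2,2,\ldots,2$. Reversed, it is the same list, because the core $M^2R(MR^2MRM^2R)^{k-1}MR^2$ is already palindromic (as noted in the previous lemma) and it is flanked symmetrically by $\ell$ blocks of $M^2R^2$ on each side. Each $M^2R^2$ on either side contributes $2$ operations of $M$ and $2$ of $R$, so $2\ell$ blocks add $4\ell$ of each to the $4k-1$ of the core. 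This gives $4(k+\ell)-1$.

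For the polynomial I would induct on $\ell$, with the previous lemma as the base case $\ell=0$. For the inductive step, write the sequence for $\ell+1$ as $M^2R^2\,A\,M^2R^2$, where $A$ is the sequence for $\ell$. By the inductive hypothesis, $A$ has $s=t=N-1$ copies of each operator, where $N=4(k+\ell)$. Using Remark \ref{usefulremark}, convert the rightmost $M^2R^2$: $R^2(1)_m=J^2(1)_m$, and then the leading $M^2$ on $(q^2)_m$ equals $S^2$ applied to it, so $AM^2R^2(1)_m = AS^2J^2(1)_m$. By Lemma \ref{commlemma}, $S$ and $J$ commute past $R,M$, so the whole word becomes $M^2R^2S^2J^2A = (MJ)(MJ)(RS)(RS)A$ after reordering commuting operators. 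More concretely I would write it as $MJ\,MJ\,RS\,RS\,A$, mirroring the previous lemma's rewrite $(MJ)^2RS(\cdots)$.

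Then apply Lemma \ref{calclemma}. First, $(RS)^2$ gives $m^2q^2\chi(A)=m^4\chi(A)$ with $m=q$, and raises the counts to $N+1$ each. Next, the first $MJ$ adds $2(m^{N+2}-m^{N+1})$ and the second adds $2(m^{N+3}-m^{N+2})$, for a total of $2(m^{N+3}-m^{N+1})$. Writing $K=k+\ell$, the result is
$$2(m^{N+3}-m^{N+1}) + m^4\left(2Km^{N-1}+m^{N-2}-2Km^{N-3}\right) = 2(K+1)m^{N+3}+m^{N+2}-2(K+1)m^{N+1},$$
which is the claimed formula at $\ell+1$, since $4(K+1)-1=N+3$.

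The main obstacle is justifying the operator rewriting carefully. The trailing $M^2R^2$ acts first on $(1)_m$, so the identities $R^2(1)=J^2(1)$ and $M^2(q^2)_m=S^2J^2(1)$ require checking that $M$ on a single-multiplicity partition $(a_0)_m$ equals $S$ followed by $J$-type adjustments. The safest route is to follow exactly the pattern the paper uses in the previous lemma, where the right cap $MR^2$ became $MJ$ and $RS$. Here, the block $M^2R^2$ on the right corresponds, via Corollary \ref{conjcor}-style duality, to $J^2$ and $S^2$, together with the leftmost $M^2R^2$ kept as $M^2$ and $R^2$. If bookkeeping becomes confusing, I would instead verify it directly by computing the operation counts and applying Definition \ref{opdef} step by step: $R^2$ multiplies by $m^2$, and $M^2$ adds $m^{N+3}-m^{N+1}$. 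This would be done as in the previous lemma's segment-by-segment computation.
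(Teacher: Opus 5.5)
Your plan is the paper's own proof: induct on $\ell$ from the previous lemma, rewrite $M^2R^2\,A\,M^2R^2$ as $(MJ)^2(RS)^2A$, and apply Lemma \ref{calclemma}. The self-conjugacy, the operator counts, and the final computation $m^4\chi(A)+2(m^{N+3}-m^{N+1})$ are all correct.

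The one real problem is your justification of the rewriting. It contains two errors that happen to cancel.

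\begin{itemize}
\item The identity $AM^2R^2(1)_m=AS^2J^2(1)_m$ is false, and $M^2$ applied to $(q^2)_m$ is not $S^2$ applied to it. Indeed $S^2J^2(1)_m=S^2(q^2)_m=(q^2,0,0)_m$ partitions $m^2q^2$, whereas $M^2R^2(1)_m=(1,0,q^2-1)_m$ partitions $m^2+q^2-1$.
\item Passing from $M^2R^2S^2J^2A$ to $(MJ)^2(RS)^2A$ requires interchanging $S$ and $J$, which do not commute. For $\mu$ a partition of $n$ with multiplicity sum $q^t$, $S^2J^2\mu$ partitions $m^2(n+q^{t+2}-q^t)$, while $J^2S^2\mu$ partitions $m^2n+q^{t+2}-q^t$. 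So $M^2R^2S^2J^2A$ generates a different partition from $(MJ)^2(RS)^2A$.
\end{itemize}

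The correct identity has the shift first and then the jumps:
$M^2R^2(1)_m=M^2J^2(1)_m=J^2M^2(1)_m=J^2S^2(1)_m$.
This uses $R^2(1)_m=J^2(1)_m$, $MJ=JM$, and $M^2(1)_m=S^2(1)_m$. You can also check directly that $J^2(1,0,0)_m=(1,0,q^2-1)_m$.

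Then
$M^2R^2AM^2R^2(1)_m=M^2R^2AJ^2S^2(1)_m=M^2J^2R^2S^2A(1)_m=(MJ)^2(RS)^2A(1)_m$.
Here you move $J^2$ and $S^2$ leftward past $A$ and $R^2$ without ever swapping them with each other, so only Lemma \ref{commlemma} is used. With this correction your argument is complete and coincides with the paper's.

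Your fallback (``$R^2$ multiplies by $m^2$, and $M^2$ adds $m^{N+3}-m^{N+1}$'') only handles the left cap. The right cap is exactly where this identity is needed, or else an equivalent computation of $A$ acting on $(1,0,q^2-1)_m$ instead of $(1)_m$.
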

\begin{proof}
Again the claims on self-conjugacy and operation counts are clear based on preceding results. We proceed by induction on $\ell$ with our last lemma serving as the base case $\ell = 0$. Suppose the result holds for some $\ell\geq 0$. Then we investigate what happens when we cap the sequence on both the left and right with an extra copy of $M^2R^2$. By Remark \ref{usefulremark} and Lemma \ref{commlemma}, that action is the same as applying $(MJ)^2(RS)^2$ on the left. Using Lemma \ref{calclemma}, the $(RS)^2$ operator will multiply the polynomial by $m^4$ while increasing the number of operations to $4(k+\ell)+1$ each. Then $(MJ)^2$ will add $2(m^{4(k+\ell)+3} - m^{4(k+\ell)+1})$, resulting in
$$
2(k+\ell+1)m^{4(k+\ell)+3}+m^{4(k+\ell)+2}-2(k+\ell+1)m^{4(k+\ell)+1},
$$
which meets the needs for the induction.
\end{proof}

We first note the special case when $k=1$ and $\ell = \frac{m-2}{2}$ as it applies for all $m=q\geq 2$. 

\begin{corollary}
Let $m=q\geq 2$ be even integers. Then the sequence of operators 
$(M^2 R^2)^{\frac{m-2}{2}} M^2R MR^2 (M^2 R^2)^{\frac{m-2}{2}}$ 
generates a self-conjugate CMP of $m^{2m}$. 
\end{corollary}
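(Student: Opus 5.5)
This corollary is the special case $k=1$, $\ell=\frac{m-2}{2}$ of the preceding lemma, so the plan is a direct specialization. First, note that since $m$ is even and $m\geq 2$, the exponent $\ell=\frac{m-2}{2}$ is a nonnegative integer, and $k=1$ satisfies $k\geq 1$; hence the lemma applies. With $k=1$ the middle block $(MR^2MRM^2R)^{k-1}$ is empty. The sequence $(M^2R^2)^\ell M^2R(MR^2MRM^2R)^{k-1}MR^2(M^2R^2)^\ell$ therefore becomes exactly $(M^2R^2)^{\frac{m-2}{2}}M^2RMR^2(M^2R^2)^{\frac{m-2}{2}}$, the sequence in the statement. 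Self-conjugacy then follows from the lemma, or directly from Corollary \ref{selfconj}: the exponent sequence $(2,2,\ldots,2,2,1,1,2,2,\ldots,2,2)$ is a palindrome.

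Next, substitute $k+\ell=1+\frac{m-2}{2}=\frac{m}{2}$ into the formula from the lemma. Then $2(k+\ell)=m$, $4(k+\ell)=2m$, and the partitioned integer is
$$
m\cdot m^{2m-1}+m^{2m-2}-m\cdot m^{2m-3}=m^{2m}+m^{2m-2}-m^{2m-2}=m^{2m}.
$$

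There is no real obstacle here. The only points needing care are the parity and nonnegativity condition on $\ell$, which is why $m$ must be even, and confirming that the empty middle block leaves precisely the displayed sequence. As a sanity check, when $m=2$ we get $\ell=0$ and the sequence $M^2RMR^2$. Lemma \ref{evenbase} gives $\chi=m(m^2-1)+mq+q(q^2-1)$, which at $m=q=2$ equals $6+4+6=16=2^4$, as required.
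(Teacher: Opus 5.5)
Your proposal is correct and follows the same route as the paper: the paper obtains this corollary by specializing the preceding lemma to $k=1$, $\ell=\frac{m-2}{2}$. Your substitution $k+\ell=\frac{m}{2}$, which collapses the formula to $m^{2m}$, is right, as is your $m=2$ check via Lemma \ref{evenbase}.
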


For general $k$ and $\ell$ we get a more robust family that includes both of those shown in the last two corollaries. We also substitute $k$ for $k-1$ in the formula to simplify it a bit.  The count in the corollary follows by counting the non-negative pairs $(k,\ell)$ that sum to $\frac{m-2}{2}$.

\begin{corollary}
Let $m=q\geq 2$ be even integers, and let $k,\ell \geq 0$ be integers such that $k+\ell = \frac{m-2}{2}$. Then $(M^2R^2)^\ell M^2R(MR^2MRM^2R)^{k}MR^2 (M^2R^2)^\ell$ generates a self-conjugate CMP of $m^{2m}$. For a given even value of $m=q$, there are $\frac{m}{2}$ distinct self-conjugate CMPs of $m^{2m}$ in this family. 
\end{corollary}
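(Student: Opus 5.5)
We will deduce this corollary from the preceding lemma (the one with the sequence $(M^2R^2)^\ell M^2R(MR^2MRM^2R)^{k-1}MR^2(M^2R^2)^\ell$) by re-indexing, then argue separately that the family members are distinct. Write $k'=k+1$, so the corollary's sequence $(M^2R^2)^\ell M^2R(MR^2MRM^2R)^{k}MR^2(M^2R^2)^\ell$ is exactly the lemma's sequence with $k'\geq 1$ and $\ell\geq 0$. The lemma then says it generates a self-conjugate CMP of
$$
2(k'+\ell)m^{4(k'+\ell)-1}+m^{4(k'+\ell)-2}-2(k'+\ell)m^{4(k'+\ell)-3}.
$$
Self-conjugacy is already part of the lemma. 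If one wants it independently, the exponent sequence of the sequence is a palindrome, so Corollary \ref{selfconj} applies.

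The key step is to substitute $k'+\ell = k+1+\ell = \frac{m-2}{2}+1=\frac{m}{2}$. Then $2(k'+\ell)=m$ and $4(k'+\ell)=2m$, and the expression becomes
$$
m\cdot m^{2m-1}+m^{2m-2}-m\cdot m^{2m-3}=m^{2m}.
$$
This proves that each member partitions $m^{2m}$. The hypothesis that $m$ is even is exactly what makes $\frac{m-2}{2}$ an integer, so that such $k,\ell$ exist.

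For the count, the admissible pairs are the non-negative integer solutions of $k+\ell=\frac{m-2}{2}$. These are $\ell=0,1,\ldots,\frac{m-2}{2}$, which gives $\frac{m}{2}$ pairs. It remains to show that different pairs give different partitions. By the uniqueness in Theorem \ref{repthm} (in the form of Corollary \ref{repcor}), it suffices to show the generating sequences differ as words in $M$ and $R$.

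The main point needing care is that distinctness. I will compare the leading blocks. For $\ell\geq 1$ the word begins $M^2R^2$. For $\ell=0$ it begins $M^2R$ followed by $M$ (when $k\geq 1$) or by $MR^2$ (when $k=0$), so its second block is $R^1$. Hence the leftmost block structure is $M^2R^2$ repeated exactly $\ell$ times before an $R$-block of length $1$ appears. In block form, the exponent sequence begins with $\ell$ pairs $(2,2)$ and then $(2,1)$, which reads off $\ell$ uniquely. Therefore different $\ell$, equivalently different pairs $(k,\ell)$, give different sequences, and so different partitions.
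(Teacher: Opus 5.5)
Your proposal is correct and follows the paper's route: re-index the preceding lemma with $k+1$ in place of $k$, substitute $k+1+\ell=\frac{m}{2}$ to obtain $m^{2m}$, and count the $\frac{m}{2}$ pairs $(k,\ell)$. Your explicit distinctness check fills in a step the paper leaves implicit: you read off $\ell$ as the number of leading $(2,2)$ exponent pairs and then invoke the uniqueness in Theorem~\ref{repthm}.
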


\begin{disc} \label{summarylist}
We now have three families of non-square, self-conjugate CMPs of powers of $m$ for arbitrary values of $m$ and two more families for even $m$.
\begin{nlist} 
\item $(MR)^m$ generates a  self-conjugate CMP of $m^{m+1}$ for all integers $m\geq 2$.
\item $M^k(MR)^{m^{k+1}-m^k - m^{k-1} - \cdots - m} R^k$ generates a  self-conjugate CMP of \\$m^{m^{k+1}-m^k - m^{k-1} - \cdots - m +k+1}$ for any integers $m\geq 2$ and  $k\geq 0$.
\item $(MR)^kM(MR)^{m^2-(2k+1)m} R(MR)^k$ generates a  self-conjugate CMP of \\ $m^{m^2-(2k+1)m+2(k+1)}$  for any integers $m\geq 2$ and  $0 \leq k \leq \frac{m-1}{2}$.
\item $(MR^2MR M^2 R)^{\frac{m}{2}}$ generates a  self-conjugate CMP of $m^{2(m+1)}$ for even integers $m\geq 2$.
\item $(M^2R^2)^\ell M^2R(MR^2MRM^2R)^{k}MR^2 (M^2R^2)^\ell$ generates a  self-conjugate CMP of $m^{2m}$  for even integers $m\geq 2$ and $k,\ell\geq 0$ such that $k+\ell = \frac{m-2}{2}$.
\end{nlist}
The square self-conjugate CMPs are generated by $R^kM^k$ and partition $m^{2k}$. Similarly, any family above can be capped on the left with $R^k$ and the right with $M^k$ to generate a self-conjugate, non-square CMP of a power of $m$ that is $m^{2k}$ times larger than the one given by the family above. 
Because of the squares, every even power of \textit{any} $m$ possesses a self-conjugate CMP. 

Finally, consider even $m$ only. Family (1) partitions odd powers of $m$; Family (2) can generate either even or odd powers depending on the choice of $k$; and Families (3), (4), and (5) partition even powers of $m$. Furthermore, by applying $R^k$ on the left and $M^k$ on the right to Family (1), we can generate self-conjugate CMPs for all odd powers of $m$ as long as that odd power is at least $m+1$.   
\end{disc}

\begin{ques}
When $m$ is even, is $m^{m+1}$ the smallest odd power of $m$ that posseses a self-conjugate CMP? 
\end{ques}

This question is related to the following question.

\begin{ques}
For any $m$, does $(MR)^m$ always generate the smallest non-square CMP of a power of $m$?
\end{ques}

When $m$ is odd, note that Families (1) and (3) clearly partition even powers of $m$. The same is true for Family (2): Notice that $B=m^{k+1}-m^k - m^{k-1} - \cdots - m$ has $k+1$ terms. When $k$ is even, $B$ has an odd number of odd terms and so is odd. Thus $B+k+1$ is even. When $k$ is odd, $B$ has an even number of odd terms and is even. Thus $B+k+1$ is even again. We also already noted that squares always partition even powers of $m$. In other words, we do not yet have any examples of self-conjugate CMPs for odd powers of odd $m$. 

\begin{ques}
When $m$ is odd, are there any self-conjugate CMPs of odd powers of $m$?
\end{ques}

\section{Adjusting the bases $(m,q)$ using powers}

In this section we look at how different bases can relate different partitions to each other. We first consider bases $(m,q) = (w^a,w^b)$, for some integer $w\geq 2$, and how they relate to $(w,w)$. We also consider the bases $(m^a,q^b)$ and their relationship to $(m,q)$.  

Let $(m,q) = (w^a,w^b)$, where $a, b \geq 1$ for some integer $w\geq 2$. We can now generalize \cite[Theorem 3.4]{FL21}, which counts the number of CMPs for $n$ of the form $m^s+m^t-1$. 

\begin{theorem} \label{commonpowerthm}
Let $(m,q) = (w^a,w^b)$, where $a, b \geq 1$ for some integer $w\geq 2$. Let $n\equiv -1 \pmod{w}$. Then 
\begin{nlist}
\item there are exactly 2 partitions of $n$ if $n=w^{as}+w^{bt}-1$, where $as \equiv 0 \pmod b$, $bt \equiv 0 \pmod a$, and $as\neq bt$,
\item there is exactly 1 partition of $n$ if $n=w^{as}+w^{bt}-1$, where at least one of the three conditions listed above does not hold,
\item there is no partition of $n$ otherwise. 
\end{nlist}
In the first case, the two partitions are generated by the distinct sequences $M^sR^t$ and $M^{bt/a}R^{as/b}$. In the second case, $M^sR^t$ is the unique sequence that partitions the given $n$. 
\end{theorem}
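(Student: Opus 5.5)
The plan is to show that, under $n\equiv -1 \pmod w$, every generating sequence for a partition of $n$ must have the two-block form $M^sR^t$. Counting partitions then reduces to counting the ways to write $n+1$ as $w^{x}+w^{y}$ with $a\mid x$ and $b\mid y$. Since $m=w^a$ and $q=w^b$ with $a,b\ge 1$, both $m\equiv 0$ and $q\equiv 0 \pmod w$.

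First, take any partition $\lambda$ of $n$ and write it uniquely as in Corollary \ref{repcor}, $\lambda=M^{s_k}R^{t_k}\cdots M^{s_1}R^{t_1}(1)_m$, using Theorem \ref{repthm}. Suppose $k\ge 2$, so that $s_1>0$ and $t_2>0$. I will inspect the monomials in (\ref{charpolyodd}) and (\ref{charpolyeven}).
\begin{itemize}
\item Every term of the first summation, and every middle term, carries a factor $m^{s_1+\cdots}$ with $s_1>0$.
\item The terms of the second summation with $i\ge 2$ carry $m^{s_1}$.
\item The $i=1$ term of the second summation carries $q^{t_2+\cdots}$ with $t_2>0$.
\end{itemize}
So every monomial is divisible by $w$ and $n\equiv 0\pmod w$, contradicting $n\equiv -1\pmod w$ because $w\ge 2$. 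This is the same mechanism as in the proof of Lemma \ref{4congr}(1). Hence $k=1$, and by Lemma \ref{oddbase} we get $n=m^s+q^t-1=w^{as}+w^{bt}-1$. (If exactly one of $s,t$ is zero, then $n$ is a positive power of $w$, which is $\equiv 0$, so that case is excluded. The case $s=t=0$ gives $n=1$ and fits case (2).) This already proves (3).

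Second, I count the pairs $(s,t)$ with $w^{as}+w^{bt}=n+1$; by Theorem \ref{repthm}, distinct pairs give distinct partitions. Set $x=as$ and $y=bt$. I will use uniqueness of base-$w$ representations to show that if $w^{x'}+w^{y'}=w^x+w^y$, then $\{x',y'\}=\{x,y\}$ as multisets. For $w\ge 3$ this follows directly from digits. For $w=2$ there is one wrinkle: $2^x+2^x=2^{x+1}$ must not equal $2^{x'}+2^{y'}$ with $x'\ne y'$, since the latter has two binary ones. I would check this carefully, since it is the one place where uniqueness could fail.

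Consequently the only candidate pairs are $(x,y)$ and $(y,x)$. The swapped pair comes from a valid sequence $M^{s'}R^{t'}$ exactly when $a\mid y$ and $b\mid x$, with $s'=bt/a$ and $t'=as/b$. It gives a genuinely different sequence exactly when $x\ne y$. This yields (1), together with the generating sequences $M^sR^t$ and $M^{bt/a}R^{as/b}$, and (2) when one of the three conditions fails.

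The main obstacle is the first step. There I must verify, against the exact index ranges in Theorem \ref{polynthm} (including the degenerate exponents $s_k=0$ or $t_1=0$), that no monomial in a $k\ge 2$ characteristic polynomial can be free of both $m$ and $q$.
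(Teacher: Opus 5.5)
Your proposal is correct and follows the paper's proof in outline. The paper gets the reduction to $M^sR^t$ by citing Lemma~\ref{4congr}(1) and noting $w\mid d$ and $w\mid mq$, which is exactly the monomial-divisibility mechanism you re-derive; it then compares exponents by assuming $bt$ is the smallest of the four, dividing by $w^{bt}$ and comparing residues mod $w$, whereas you use uniqueness of base-$w$ digits, and your binary-ones check for $w=2$ is the right fix there. Your explicit handling of $s=t=0$ is slightly more careful than the paper, whose claim that $n\equiv 0 \pmod w$ whenever $s$ or $t$ is $0$ overlooks $n=1$ when $w=2$; that case does land in (2), as you say.
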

\begin{proof}
Lemma \ref{4congr}(1) shows that if $n$ has a conjugate $(m,q)$-ary partition, then $n=m^s+q^t-1$ or $n\equiv mq \pmod d$, where $d = \gcd( m(m-1),
 q(q-1) )$. So, $n\equiv 0 \pmod w$ in the latter case given that $w$ divides $d,m,q$, meaning that we are only concerned with $n=m^s+q^t-1$. In the case that $s$ or $t$ is 0 we note that this gives either a partition of a power of $m$, generated by $M^s$, or a power of $q$, generated by $R^t$. In either case $n\equiv 0\pmod w$, so $s$ and $t$ must both be nonzero. The partitions generated by $M^sR^t$ where both $s,t>0$ will have $n = m^s+q^t-1 \not\equiv 0 \pmod w$, so they are the only ones that can
   possibly partition an $n\equiv -1 \pmod w$. Note that $M^sR^t$ generates a partition of  $m^s+q^t-1 = w^{as}+w^{bt}-1$. Suppose that $w^{as}
   +w^{bt}-1 = w^{as_2}+w^{bt_2}-1$. If $t= t_2$, then $s = s_2$ as well and both come from the same partition generated by $M^{s}R^{t}$. 
   So, we may assume without loss of generality that $bt$ is the smallest of the four exponents and that $t < t_2$. Thus, 
   $w^{as-bt}+1 = w^{as_2-bt}+w^{bt_2-bt}$, where all exponents are non-negative. As $bt_2-bt>0$, it must be that $as_2=bt$ and $bt_2=as$. 
   Note that $as=bt$ if and only if $s=s_2$ and $t=t_2$. Given a pair $(s,t)$, we have $M^sR^t$ generates a partition of the same number $n$ as
    the distinct partition generated by $M^{s_2}R^{t_2}$ if and only if $bt \equiv 0 \pmod a$, $as \equiv 0 \pmod b$, and $as\neq bt$. In such 
    a case, $s_2 = bt/a$ and $t_2 = as/b$, both integers.
\end{proof}

We now change focus to look at conjugate $(m^a,q^b)$-ary partitions. We derive an injective map from these partitions into the set of conjugate $(m,q)$-ary partitions.

\begin{proposition} Let $m,q\geq 2$, and $a,b$ be positive integers. There is a one-to-one correspondence between the set of all conjugate $(m^a,q^b)$-ary partitions and conjugate $(m,q)$-ary partitions of the form $M^{as_k}R^{bt_k}\cdots M^{as_1}R^{bt_1}(1)_m$.
\end{proposition}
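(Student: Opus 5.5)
The idea is to send a conjugate $(m^a,q^b)$-ary partition $\mu=(c_k,\ldots,c_0)_{m^a}$ to the $m$-ary partition $\phi(\mu)$ that places the multiplicity $c_i$ at position $ai$ and zero at every other position. In other words, $\phi(\mu)$ has part $m^{ai}$ with multiplicity $c_i$ and partitions the same integer $n=\sum c_i (m^a)^i$. We then check three things: that $\phi(\mu)$ is a conjugate $(m,q)$-ary partition, that its generating sequence has the stated form, and that $\phi$ is a bijection onto that set.

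First, $\phi(\mu)$ is conjugate $(m,q)$-ary. Its nonzero partial sums of multiplicities are exactly the partial sums $\sum_{i\ge r} c_i$ of $\mu$, and the zero positions only repeat earlier partial sums. By the Proposition characterizing conjugate $(m,q)$-ary partitions via partial sums, those partial sums of $\mu$ are powers of $q^b$, hence powers of $q$. So $\phi(\mu)$ is conjugate $(m,q)$-ary.

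Second, the generating sequence. The cleanest route is operator-level: we show $\phi\circ M_{m^aq^b}=M_{mq}^{a}\circ\phi$ and $\phi\circ R_{m^aq^b}=R_{mq}^{b}\circ\phi$, and note $\phi((1)_{m^a})=(1)_m$.
\begin{itemize}
\item For $R$: multiplying every multiplicity by $q^b$ is the same as applying $R_{mq}$ $b$ times.
\item For $M$: $M_{m^aq^b}$ replaces the top multiplicity $c_k$ by $c_k-1$ and puts a $1$ at position $k+1$. Under $\phi$ this is $(1,0,\ldots,0,c_k-1,\ldots)_m$ with $a-1$ zeros, and this is exactly $M_{mq}^a$ applied to $\phi(\mu)$. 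Indeed, the first $M_{mq}$ moves one unit from position $ak$ to $ak+1$, and each subsequent $M_{mq}$ moves the top $1$ up one position, since the top multiplicity is $1$ and $1-1=0$.
\end{itemize}
By Theorem \ref{repthm}, $\mu=M^{s_k}R^{t_k}\cdots M^{s_1}R^{t_1}(1)_{m^a}$ for a unique sequence, so $\phi(\mu)=M^{as_k}R^{bt_k}\cdots M^{as_1}R^{bt_1}(1)_m$.

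Third, bijectivity. Define $\psi$ on conjugate $(m,q)$-ary partitions of the form $M^{as_k}R^{bt_k}\cdots(1)_m$ by $\psi = M_{m^aq^b}^{s_k}R_{m^aq^b}^{t_k}\cdots(1)_{m^a}$. This is well defined because generating sequences are unique by Theorem \ref{repthm}, so the exponents $as_i, bt_i$, and hence $s_i,t_i$, are determined by the partition. The intertwining relations then give $\phi\circ\psi=\mathrm{id}$. Also $\psi\circ\phi=\mathrm{id}$, since $\phi(\mu)$'s unique sequence is $\mu$'s sequence with the exponents scaled. Injectivity of $\phi$ is also clear directly, since $\phi$ just spreads out the multiplicities.

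The main obstacle is the $M$ intertwining when the top multiplicity $c_k$ equals $1$, so that $c_k-1=0$. This requires checking that intermediate zero entries cause no problem with Definition \ref{opdef}, where $M$ only uses the top entry. A second point to watch is that $(1)_{m^a}$ and $(1)_m$ are the same partition of $1$, so the base case matches.
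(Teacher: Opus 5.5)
Your proof is correct, and it reaches the paper's correspondence by a somewhat different route.

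\textbf{The paper's route.} The paper defines the map directly on generating sequences. It sends the partition generated by $M^{s_k}R^{t_k}\cdots M^{s_1}R^{t_1}$ to the one generated by $M^{as_k}R^{bt_k}\cdots M^{as_1}R^{bt_1}$, so well-definedness and bijectivity come implicitly from Theorem~\ref{repthm}. Its only explicit check is numerical: $R_{m^aq^b}$ and $R_{mq}^b$ both multiply $n$ by $q^b$. Likewise, $M_{m^aq^b}$ and $M_{mq}^a$ both add $m^{a(k+1)}-m^{ak}$ when $k$ (respectively $ak$) copies of $M$ have already been applied.

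\textbf{Your route.} You define $\phi$ intrinsically on multiplicity vectors. You then prove the intertwining relations $\phi\circ M_{m^aq^b}=M_{mq}^a\circ\phi$ and $\phi\circ R_{m^aq^b}=R_{mq}^b\circ\phi$ at the level of partitions, not just of the integer $n$. This shows your $\phi$ coincides with the paper's map, and it gives a stronger conclusion. Corresponding partitions are not merely partitions of the same $n$; they are literally the same multiset of parts. So the correspondence is just the inclusion of conjugate $(m^a,q^b)$-ary partitions into conjugate $(m,q)$-ary ones. From this viewpoint your first step needs no partial-sum criterion, since powers of $m^a$ and $q^b$ are already powers of $m$ and $q$.

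\textbf{On the rest.} Your explicit inverse $\psi$ makes bijectivity airtight where the paper leaves it implicit. The case $c_k=1$ that you flag is not actually an obstacle. $M_{mq}$ only reads the top nonzero entry, which for $\phi(\mu)$ is $c_k$ at position $ak$, so your computation goes through unchanged.
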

\begin{proof}
Consider a function taking the conjugate $(m^a,q^b)$-ary partition generated by 
 $M^{s_k}R^{t_k}\cdots M^{s_1}R^{t_1}$ (with cumbersome subscripts of $m^aq^b$ omitted) to the conjugate $(m,q)$-ary partition generated by
  $M^{as_k}R^{bt_k}\cdots M^{as_1}R^{bt_1}$ (with $mq$ subscripts omitted). We claim that both partitions have the same characteristic polynomial and partition the same number. For the $(m^a,q^b)$-ary case, the operator $R$ multiplies by $q^b$ and $M$ adds $(m^a)^{k+1} - (m^a)^k$, where $k$ is the number of $M$ operators that occurred earlier in the sequence generating the partition. For an $(m,q)$-ary partition, the operator $R^b$ multiplies by $q^b$ and $M^a$ adds $m^{ak+a} - m^{ak}$, where $ak$ is the number of $M$ operators that occurred earlier, which is clearly a multiple of $a$. 
\end{proof}

For example, $(MR)^4$ generates a conjugate $4$-ary partition of $4^5$, and so $(M^2R^2)^4$ generates a conjugate $2$-ary partition of $2^{10}$. This method can produce conjugate $(m,q)$-ary partitions from higher powered cases. Conversely, if one could enumerate all conjugate $(m,q)$-ary partitions for a fixed pair $(m,q)$, one could enumerate them for all pairs $(m^a,q^b)$ by checking which $(m,q)$-ary partitions are generated using exponents on $M$ that are all divisible by $a$ and exponents on $R$ that are all divisible by $b$.

We close with an example that extends Example \ref{MRk}, which was critical throughout the last section when studying CMPs. In this example we present a family of CMPs, which are not self-conjugate, unlike the examples in the last section. These partitions can also be used to generate non-trivial conjugate $(m^a,m)$-ary or $(m,m^a)$-ary partitions of certain powers of m based on the last proposition.

\begin{example} Consider the CMPs generated by $(M^aR^a)^{m-1}M^aR$. Note that when $a=1$, this sequence is the same as Example \ref{MRk} and thus partitions $m^{m+1}$. We claim that these sequences generate CMPs of $m^{am+1}$ for any $a\geq 1$.
 
Consider the sequence $(M^aR^a)^{k-1}M^aR$ for $k\geq 1$.
We start with $M^a R$ generating a CMP of $m + m^a -1$. Next,  $(M^aR^a)M^aR$ generates a partition of
$$
m^a(m+m^a-1) + m^{2a}-m^a = 2m^{2a}+m^{a+1}-2m^a.
$$
By induction on $k$ one can see that $(M^aR^a)^{k-1}M^aR$ generates a CMP of 
$$km^{ka}+m^{(k-1)a+1}-km^{(k-1)a}$$
 and so
$(M^aR^a)^{m-1}M^aR$ generates a CMP of 
$$
m\cdot m^{ma}+m^{(m-1)a+1}-m\cdot m^{(m-1)a} = m^{am+1},
$$
as claimed. By conjugation,  $MR^a(M^aR^a)^{m-1}$ also generates a CMP of $m^{am+1}$. 

Since all powers of $M$ in $(M^aR^a)^{m-1}M^aR$ are divisible by $a$, we also know that $(MR^a)^{m-1}MR$ generates a conjugate $(m^a,m)$-ary partition of $m^{am+1}$. Similarly, since all powers of $R$ in $MR^a(M^aR^a)^{m-1}$ are divisible by $a$, we know that $MR(M^a R)^{m-1}$ generates a conjugate $(m,m^a)$-ary partition of $m^{am+1}$. These two partitions are, of course, conjugates of each other.
\end{example}

\section{Conclusion}

In this work we have classified all conjugate $(m,q)$-ary partitions in terms of the generating operators $M$ and $R$ (or $S$ and $J$) and in terms of a characterizing polynomial formula. These ideas led to new examples of such partitions, how they relate to their conjugates, and better understanding of self-conjugate $(m,m)$-ary partitions. Despite our new understanding, there are still many open questions and avenues for future work. The modular arithmetic involved in trying to classify which integers $n$ can have a conjugate $(m,q)$-ary partition for any given values of $m$ and $q$ seems very subtle and challenging. Perhaps studying the density of such integers $n$ among all integers will be more tractable. Even in the restricted families of self-conjugate $(m,m)$-ary partitions similar questions remain when $n$ is a power of $m$ as seen at the end of Section 5. More progress is likely to be made through a combination of numerical experimentation on a computer combined with the theoretical tools we have developed related to operators and the characteristic polynomial.

\textbf{Acknowledgments}
The authors wish to thank the anonymous referee for comments that improved the exposition of this paper, especially in Section 3. The referee also suggested asking the question about the density of integers that have a conjugate $(m,q)$-ary partition that we posed at the end of Section 4.

\end{document}